\newtheorem{introtheo}{Théorème}
\newtheorem{theo}{Théorème}[section]
\newtheorem{lemme}[theo]{Lemme}
\newtheorem{prop}[theo]{Proposition}
\theoremstyle{definition}
\newtheorem{deftn}[theo]{Définition}
\theoremstyle{remark}
\newtheorem{rem}[theo]{Remarque}
\def\leq{\leqslant}
\def\geq{\geqslant}
\def\undef{\cdot}
\def\N{\mathbb{N}}
\def\Z{\mathbb{Z}}
\def\Q{\mathbb{Q}}
\def\R{\mathbb{R}}
\def\C{\mathbb{C}}
\def\F{\mathbb{F}}
\def\O{\mathcal{O}}
\def\E{\mathcal{E}}
\def\df{\mathfrak{d}}
\def\epsilon{\varepsilon}
\def\id{\text{\rm id}}
\def\card{\text{\rm Card}}
\def\Diag{\text{\rm Diag}}
\def\Gal{\text{\rm Gal}\:}
\def\GL{\text{\rm GL}\:}
\def\im{\text{\rm im}\:}
\def\Frac{\text{\rm Frac}\:}
\def\Frob{\text{\rm Frob}}
\def\val{\text{\rm val}}
\def\nr{\text{\rm nr}}
\def\mr{\text{\rm mr}}
\def\Sk{\mathfrak{S}}
\def\Mk{\mathfrak{M}}
\def\Dk{\mathfrak{D}}
\def\ss{\text{\rm ss}}
\def\st{\text{\rm st}}
\def\Tst{T_\st^\star}
\def\Mod{\text{\rm Mod}}
\def\Fil{\text{\rm Fil}}
\def\Max{\text{\rm Max}}
\def\HP{\text{\rm HP}}
\def\GeLa{\text{\rm G-L}}
\def\code#1{\textrm{\tt #1}}
\title{Semi-simplifiée modulo $p$ des représentations semi-stables :\\
une approche algorithmique}
\author{Xavier Caruso et David Lubicz\\
  \multicolumn{1}{p{.7\textwidth}}{\centering\emph{IRMAR\\
    Campus de Beaulieu, bâtiments 22 et 23 \\
    263 avenue du Général Leclerc, CS 74205\\
    35042  RENNES Cédex
  }}}
\date\today
\begin{document}

\maketitle
\selectlanguage{english}
\begin{abstract}
The aim of this paper is to present an algorithm the complexity of
which is polynomial to compute the semi-simplified modulo $p$ of a
semi-stable $\Q_p$-representation of the absolute Galois group of a
$p$-adic field (\emph{i.e.} a finite extension of $\Q_p$). 
In order to do so, we use abundantly the $p$-adic Hodge theory and, in
particular, the Breuil-Kisin modules theory.
\end{abstract}
\selectlanguage{francais}

\begin{abstract}
Le but de cet article est de présenter un algorithme de complexité 
polynômiale pour calculer la semi-simplifiée modulo $p$ d'une 
$\Q_p$-représentation semi-stable du groupe de Galois absolu d'un corps 
$p$-adique (\emph{i.e.} une extension finie de $\Q_p$). Pour ce faire, 
nous utilisons abondamment la théorie de Hodge $p$-adique et, en 
particulier, la théorie des modules de Breuil-Kisin.
\end{abstract}

\setcounter{tocdepth}{2}
\tableofcontents

\bigskip

\noindent
\null \hfill \hrulefill \hfill \null

\bigskip

Soient $p$ un nombre premier et $k$ un corps fini de caractéristique 
$p$. Soient $\O_{K_0}$ l'anneau des vecteurs de Witt à coefficients dans 
$k$ et $K_0$ son corps des fractions ; c'est une extension finie non 
ramifiée de $\Q_p$. On appelle $\sigma$ l'endomorphisme de Frobenius 
agissant sur $K_0$. Soit $K$ une extension totalement ramifiée de $K_0$. 
Les corps $K_0$ et $K$ sont munis d'une valuation discrète que l'on note 
$\val$ et que l'on suppose normalisée par $\val(p) = 1$. Soient enfin 
$\bar K$ une clôture algébrique de $K$ et $G_K = \Gal(\bar K/K)$ le 
groupe de Galois absolu de $K$. La valuation $\val$ s'étend de façon 
unique à $\bar K$ et on note encore $\val$ ce prolongement.

Dans de nombreux problèmes, les représentations $p$-adiques du groupe 
$G_K$ jouent un rôle essentiel et il paraît, de nos jours, de plus en 
plus important de mettre au point des outils efficaces pour les 
manipuler sur ordinateur. En effet, la complexité des 
objets est telle qu'il n'y a guère qu'en dimension $2$ que l'on arrive à 
mener à terme des calculs explicites sur ces représentations. Or, le 
besoin se fait de plus en plus sentir de travailler avec des 
représentations de plus grande dimension pour lesquelles, au vu de ce
qui se passe déjà en dimension $2$, il semble quasiment impossible 
d'éviter d'avoir recours à une aide informatique.

Dans cet article, nous nous intéressons au calcul de la semi-simplifiée 
modulo $p$ d'une $\Q_p$-représentation de $G_K$. Rappelons brièvement 
qu'une telle représentation $V$ admet toujours un $\Z_p$-réseau stable 
$T$ par l'action de $G_K$ et que la semi-simplifiée (c'est-à-dire la somme 
directe des constituants simples de Jordan-Hölder) du quotient $T/pT$ ne 
dépend pas, à isomorphisme près, du choix de $T$ ; c'est cette
représentation semi-stable que l'on appelle la \emph{semi-simplifiée
modulo $p$} de $V$. Le résultat principal de cet article s'énonce comme 
suit.

\begin{introtheo}
\label{introtheo:algo}
Il existe un algorithme de complexité polynômiale qui calcule la
semi-simplifiée modulo $p$ d'une représentation semi-stable.
\end{introtheo}

La formulation précédente est volontairement imprécise : par exemple, 
elle ne stipule pas comment sont représentées les entrées et les sorties 
de l'algorithme, ni en quels paramètres la complexité est polynômiale. 
Pour des compléments, on renvoie au corps du texte et notamment au
début du \S \ref{sec:algo}, ainsi qu'au \S \ref{subsec:complexite} 
pour ce qui concerne la complexité.

\medskip

La démonstration du théorème \ref{introtheo:algo} repose de façon 
essentielle sur la théorie de Hodge $p$-adique et, plus précisément, 
sur la théorie des modules de Breuil-Kisin pressentie par Breuil dans 
\cite{breuil2} puis développée par Kisin dans \cite{kisin}. Rappelons
brièvement dans cette introduction que les modules de Breuil-Kisin
sont des objets d'algèbre linéaire d'apparence simple --- à savoir des
modules libres sur l'anneau $\Sk = \O_{K_0}[[u]]$ munis d'un opérateur
$\phi$ satisfaisant à un certain nombre de conditions --- qui classifient 
les réseaux (stables par un certain sous-groupe $G_\infty$ de $G_K$) à 
l'intérieur d'une représentation semi-stable. En utilisant cette 
correspondance, on ramène le calcul que l'on souhaite mener (1)~au 
calcul d'un réseau stable par l'opérateur $\phi$ à l'intérieur d'un 
certain $\phi$-module libre sur $\Sk[1/p]$ puis (2)~au calcul de la 
semi-simplifiée de la réduction modulo $p$ de celui-ci. Grâce aux
travaux de Le Borgne \cite{leborgne}, des algorithmes efficaces ont
déjà été mis au point pour le calcul de la semi-simplifiée. Le calcul du 
réseau, quant à lui, est fondé sur une idée simple : on part d'un réseau
quelconque et on itère l'opérateur $\phi$ jusqu'à obtenir un module
stable.

Malheureusement, derrière ces apparences simples, se cachent un certain 
nombre de complications techniques dues, pour l'essentiel, au fait qu'il 
est impossible de représenter sur machine un élément de $\Sk$ dans son 
intégralité (il y a une infinité de coefficients à donner et, pour 
chaque coefficient, une infinité de \og chiffres \fg) ; à cause de cela,
calculer avec des $\Sk$-modules n'est pas anodin d'un point de vue 
algorithmique ! Toutefois, nous avons montré dans un travail antérieur
\cite{carlub}, que ces problèmes peuvent être résolus (dans une certaine 
mesure) en introduisant de nouveaux anneaux $\Sk_\nu$ (pour un paramètre
$\nu$ variant dans $\Q^+$) définis ainsi :
$$\Sk_\nu = \Big\{ \, \sum_{i \in \N} a_i u^i \quad \big| \quad
a_i \in K_0 \, \text{ et } \, 
\val(a_i) + \nu i \geq 0, \, \forall i \geq 0\, \Big\}$$
et en étendant les scalaires à un nouvel $\Sk_\nu$ (pour un $\nu$ de
plus en plus grand) après chaque opération élémentaire. Ceci nous
conduit à développer une
théorie de Breuil-Kisin sur les anneaux $\Sk_\nu$ qui viennent d'être
introduits. Notre résultat principal, à ce sujet, est le théorème 
\ref{introtheo:surconv} ci-après qui dit informellement que, tant que 
$\nu$ reste suffisamment petit, remplacer l'anneau $\Sk$ par $\Sk_\nu$ 
ne porte pas à conséquence.

\begin{introtheo}[Surconvergence des modules de Breuil-Kisin]
\label{introtheo:surconv}
On se donne $r > 0$ un entier, ainsi que $\nu$ un nombre rationnel
vérifiant $0 \leq \nu < \frac{p-1}{per}$. Alors le foncteur
$$\begin{array}{rcl}
\left\{
\begin{array}{c}
\text{Modules de Breuil-Kisin sur } \Sk \\
\text{de hauteur } \leq r
\end{array}
\right\}
& \longrightarrow &
\left\{
\begin{array}{c}
\text{Modules de Breuil-Kisin sur } \Sk_\nu \\
\text{de hauteur } \leq r
\end{array}
\right\} \medskip \\
\Mk & \mapsto & \Sk_\nu \otimes_\Sk \Mk
\end{array}$$
est une équialence de catégories.
\end{introtheo}

\noindent
Le théorème précédent a fait apparaître la notion de hauteur d'un module 
de Breuil-Kisin. Nous renvoyons le lecteur au \S \ref{subsec:breuilkisin}
pour la définition de cette notion ; pour cette introduction, on pourra 
se contenter de retenir que les modules de Breuil-Kisin de hauteur $\leq 
r$ sont exactement ceux qui correspondent aux représentations 
semi-stables $V$ dont les poids de Hodge-Tate sont dans $\{0, \ldots, 
r\}$, c'est-à-dire pour lesquelles le produit tensoriel $\C_p \otimes 
_{\Q_p} V$ (où $\C_p$ est le complété de $\bar K$) se décompose comme 
une somme directe de $\C_p(h_i)$ pour des entiers $h_i \in \{0, \ldots, 
r\}$.

On déduit du théorème de surconvergence des modules de Breuil-Kisin que, 
si l'on est capable de contrôler la croissance du paramètre $\nu$ au fur 
et à mesure de l'exécution de notre algorithme, la méthode esquissée 
précédemment pour le calcul de la semi-simplifiée modulo $p$ d'une 
représentation semi-stable fonctionne bel et bien. C'est cette voie que 
nous allons suivre tout au long de cet article.

\medskip

La première partie de l'article est destinée à la mise au point des 
aspects théoriques : après quelques rappels, nous introduisons les 
modules de Breuil-Kisin sur les anneaux $\Sk_\nu$ et démontrons le 
théorème \ref{introtheo:surconv}. Les aspects algorithmiques, quant à 
eux, sont discutés dans la seconde partie de l'article, dont l'objectif 
est de présenter et d'étudier en détails (notamment en ce qui concerne 
les problèmes de précision) l'algorithme de calcul de la semi-simplifiée 
modulo $p$ d'une représentation semi-stable promis par le théorème 
\ref{introtheo:algo}.

\bigskip

Ce travail a bénéficié du soutien de l'Agence Nationale de la Recherche 
(ANR) par l'intermédiaire du projet CETHop (Calculs Effectifs en Théorie 
de Hodge $p$-adique), référence ANR-09-JCJC-0048-01.

\section{La théorie de Breuil-Kisin : rappels et compléments}

On conserve les notations de l'introduction : la lettre $p$ désigne un 
nombre premier, $k$ est un corps parfait de caractéristique $p$, on pose 
$\O_{K_0} = W(k)$, $K_0 = \Frac \O_{K_0} = \O_{K_0}[1/p]$, on note 
$\sigma$ l'endomorphisme de Frobenius agissant sur $K_0$ et on considère 
$K$ une extension finie totalement ramifiée de $K_0$ de degré $e$. On 
désigne par $\bar K$ une clôture algébrique de $K$ et par $\C_p$ le 
complété $p$-adique de $\bar K$. Il s'agit à nouveau d'un corps 
algébriquement clos. On pose $G_K = \Gal(\bar K/K)$.

On fixe en outre une uniformisante $\pi$ de $K$, ainsi qu'une suite 
compatible $(\pi_s)_{s \geq 0}$ de racines $p^s$-ièmes de $\pi$, 
c'est-à-dire telle que l'on ait $\pi_0 = \pi$ et $\pi_{s+1}^p = \pi_s$ 
pour tout $s \geq 0$. Enfin, on appelle $K_\infty$ la plus petite 
sous-extension de $\bar K$ contenant tous les $\pi_s$ et on pose 
$G_\infty = \Gal(\bar K/K_\infty) \subset G_K$.

\subsection{Quelques objets de théorie de Hodge $p$-adique}

On introduit dans ce numéro les objets de la théorie de Hodge $p$-adique 
qui seront utilisés constamment dans la suite de cet article : ce sont, 
d'une part, les $(\phi,N)$-modules filtrés de Fontaine qui permettent de 
décrire les représentations semi-stables et, d'autre part, les modules
de Breuil-Kisin qui classifient les réseaux stables par $G_\infty$ à
l'intérieur de celles-ci.

\subsubsection{Brefs rappels sur les $(\phi,N)$-modules filtrés}
\label{subsec:phiN}

Un théorème célèbre de Colmez et Fontaine \cite{colmez-fontaine} affirme 
qu'il existe une équivalence de catégories notée traditionnellement 
$D_{\st}$ ou $D_{\st,\star}$, entre la catégorie des représentations 
semi-stables\footnote{On renvoie à \cite{ast-fontaine} pour la 
définition des représentations semi-stables.} et la catégorie des 
$(\phi,N)$-modules admissibles dont voici la définition :

\begin{deftn}[Fontaine]
\label{def:phiN}
Un $(\phi,N)$-module filtré sur $K$ est la donnée de
\begin{itemize}
\item un $K_0$-espace vectoriel $D$ de dimension finie,
\item un opérateur $\sigma$-semi-linéaire (appelé Frobenius) $\phi : 
D \to D$ bijectif,
\item un opérateur linéaire (appelé opérateur de monodromie) $N : D \to 
D$ vérifiant $N \phi = p \phi N$ et
\item une filtration décroissante $(\Fil^h D_K)_{h \in \Z}$ de $D_K = D 
\otimes_{K_0} K$ telle que $\Fil^{-r} D_K = D_K$ et $\Fil^r D_K = 0$ pour
$r$ suffisamment grand.
\end{itemize}
Si $D$ est un $(\phi,N)$-module filtré, on appelle 
\begin{itemize}
\item \emph{nombre de Newton} de $D$, noté $t_N(D)$, la valuation 
$p$-adique du déterminant de $\phi$\footnote{Le déterminant de $\phi$
dépend du choix d'une base mais sa valuation, elle, n'en dépend pas.} 
et
\item \emph{nombre de Hodge} de $D$, noté $t_H(D)$, la somme des
dimensions de $\Fil^h D_K$ pour $h$ variant dans $\N$.
\end{itemize}
Un $(\phi,N)$-module filtré $D$ est dit \emph{admissible} si $t_H(D)
= t_N(D)$ et si, pour tout $D' \subset D$ stable par $\phi$ et $N$ et 
muni de la filtration induite, on a $t_H(D') \leq t_N(D')$.
\end{deftn}

Dans la suite de cet article, nous travaillerons non pas avec $D_\st$ 
mais avec une version contravariante $D_\st^\star$ de ce foncteur 
définie par $D_\st^\star(V) = D_\st(V^\vee)$ où $V^\vee$ désigne la 
représentation contragrédiente de $V$. Le foncteur réciproque de 
$D_\st^\star$ est noté $V_\st^\star$ ; il associe une représentation
galoisienne semi-stable à un $(\phi,N)$-module filtré admissible.

Avec notre convention, si $D$ est un $(\phi,N)$-module filtré 
admissible, les sauts de la filtration $\Fil^h D_K$ --- c'est-à-dire les 
entiers $h$ tels que $\Fil^{h+1} D_K \subsetneq \Fil^h D_K$ comptées 
avec multiplicité $\dim_K \frac{\Fil^h D_K}{\Fil^{h+1} D_K}$ --- 
sont exactement les poids de Hodge-Tate de la représentation 
$V_\st^\star(D)$\footnote{C'est-à-dire les entiers $h_i$ tels que $\C_p 
\otimes_{\Q_p} V_\st^\star(D) \simeq \bigoplus_{i=1}^{\dim V} 
\C_p(h_i)$.}. En particulier, les poids de Hodge-Tate de $V_\st^\star(D)$
sont positifs ou nuls si, et seulement si $\Fil^0 D_K = D_K$ ; on dit
dans ce cas que $D$ est \emph{effectif}.

\subsubsection{Les modules de Breuil-Kisin}
\label{subsec:breuilkisin}

Après avoir classifié les représentations semi-stables à l'aide de 
des $(\phi,N)$-modules filtrés, il est naturel, 
pour le problème que l'on a en vue, de chercher à décrire les réseaux à 
l'intérieur de ces représentations à l'aide d'objets de même type. La 
théorie de Breuil-Kisin, initiée par Breuil dans \cite{breuil, breuil2}, 
puis complétée par Kisin dans \cite{kisin}, donne une réponse partielle 
--- mais suffisante pour les applications qui nous intéressent --- à cette 
question. Plus précisément, elle permet de décrire, à l'aide de 
$\phi$-modules définis sur l'anneau $\Sk = \O_{K_0}[[u]]$, les 
$\Z_p$-réseaux vivant à l'intérieur d'une représentation semi-stables 
stables par l'action de $G_\infty$ (et non celle de $G_K$)\footnote{Notez
que ceci sera suffisant pour le calcul de la semi-simplifiée modulo $p$ 
qui nous intéresse car les $\F_p$-représentations semi-simples de
$G_\infty$ se trouvent être en bijection avec les $\F_p$-représentations 
semi-simples de $G_K$. On renvoie au \S \ref{subsec:etape3} pour plus de 
détails à ce sujet.}.

Pour décrire cette théorie, on a besoin de deux notations 
supplémentaires. \emph{Primo}, on note $E(u)$ le polynôme minimal de 
$\pi$ sur $K_0$ ; du fait que $\pi$ est une uniformisante de $K$, on 
déduit que $E(u)$ est un polynôme d'Eisenstein à coefficients dans 
$\O_{K_0}$. \emph{Secundo}, on pose $\Sk = \O_{K_0}[[u]]$ et on munit 
cet anneau de l'opérateur $\phi : \sum_{i \in \N} a_i u^i \mapsto 
\sum_{i \in \N} \sigma(a_i) u^{pi}$.

\begin{deftn}[Breuil]
\label{def:modKisin}
Soit $r$ un nombre entier positif. Un \emph{module de Breuil-Kisin} sur 
$\Sk$ de hauteur $\leq r$ est la donnée d'un $\Sk$-module libre $\Mk$ de 
rang fini muni d'une application $\phi : \Mk \to \Mk$ telle que :
\begin{enumerate}
\item pour tout $s \in \Sk$ et tout $x \in \Mk$, on a $\phi(sx) = 
\phi(s) \phi(x)$ ;
\item le sous-$\Sk$-module engendré par l'image de $\phi$ contient
$E(u)^r \Mk$.
\end{enumerate}
\end{deftn}

On dispose en outre d'un foncteur contravariant $\Tst$ qui, à un module 
de Breuil-Kisin de hauteur $\leq r$, associe une $\Z_p$-représentation 
libre de $G_\infty$. Ce foncteur a d'importantes propriétés qui ont été, 
pour la plupart, conjecturées par Breuil puis démontrées par Kisin dans 
\cite{kisin}. 
Le théorème ci-après en donne deux qui nous seront particulièrement 
utiles dans la suite.

\begin{theo}
\label{theo:kisin}
Soit $V$ une représentation semi-stable à poids dans $\{0, \ldots, r\}$.
Alors :
\begin{enumerate}[(1)]
\item il existe un module de Breuil-Kisin $\Mk_0$ de hauteur $\leq r$
tel que $\Q_p \otimes_{\Z_p} \Tst(\Mk_0)$ soit isomorphe à $V$ comme 
$G_\infty$-représentation
\item le foncteur $\Tst$ induit une bijection décroissante entre
l'ensemble des modules de Breuil-Kisin $\Mk \subset \Mk_0[1/p]$ 
tels que $\Mk[1/p] = \Mk_0[1/p]$ et l'ensemble des 
$\Z_p$-réseaux de $V$ stables par $G_\infty$.
\end{enumerate}
\end{theo}

Notons pour terminer ce numéro, qu'en plus de cela, si $\Mk$ est un 
module de Kisin et si $T = \Tst(\Mk)$, alors la représentation quotient 
$T/pT$ peut se retrouver à partir du quotient $\Mk/p\Mk$ grâce à une 
recette explicite que nous ne détaillons par davantage ici car elle ne
nous sera pas utile.

\subsection{Des $(\phi,N)$-modules filtrés aux modules de Breuil-Kisin}

D'après les rappels que nous venons de faire, à une représentation
semi-stable $V$ à poids de Hodge-Tate dans $\{0, \ldots, r\}$ sont
canoniquement associés deux objets, à savoir :
\begin{itemize}
\item le $(\phi,N)$-module filtré admissible effectif $D_\st^\star(V)$ ;
\item l'espace $\Dk(V) = \Mk_0[1/p]$ (muni de son action de $\phi$) si 
$\Mk_0$ est un module de Breuil-Kisin tel que $\Q_p \otimes_{\Z_p} 
\Tst(\Mk_0)$ soit isomorphe à $V$ comme $G_\infty$-représentation.
\end{itemize}
Étant donné que $D_\st^\star(V)$ détermine $V$, il détermine aussi 
$\Dk(V)$. Le but de ce numéro est d'expliquer comment il est possible 
d'obtenir $\Dk(V)$ (ou, du moins, une certaine approximation de 
celui-ci) directement à partir de $D_\st^\star(V)$. Nous suivrons pour 
cela les travaux de Génestier et Lafforgue exposés dans \cite{genlaf}.
Les anneaux $\Sk_\nu$, qui sont déjà apparus dans l'introduction, 
intervenant de façon cruciale dans la 
recette de Génestier et Lafforgue, 
nous consacrons le numéro suivant à rappeler quelques unes de leurs
propriétés essentielles.

\subsubsection{Généralités sur les anneaux $\Sk_\nu$}

Soit $\nu$ un nombre \emph{rationnel} positif ou nul. On rappelle, tout 
d'abord, que $\Sk_\nu$ est défini comme suit :
$$\Sk_\nu = \Big\{ \, \sum_{i \in \N} a_i u^i \quad \big| \quad
a_i \in K_0 \, \text{ et } \,
\val(a_i) + \nu i \geq 0, \, \forall i \geq 0\, \Big\}.$$
Clairement, pour $\nu = 0$, on a $\Sk_\nu = \Sk$ et, si $\nu' \geq \nu$, 
on a une inclusion $\Sk_\nu \subset \Sk_{\nu'}$. En particulier, on a 
toujours $\Sk \subset \Sk_\nu$ ou, autrement dit, $\Sk_\nu$ est 
naturellement une $\Sk$-algèbre. D'un point de vue analytique, l'anneau 
$\Sk_\nu$ apparaît comme l'anneau des fonctions analytiques convergentes 
et bornées par $1$ sur le disque $D_\nu$ de centre $0$ et de rayon 
$|p|^\nu$.
L'anneau $\Sk_\nu[1/p]$ jouera un rôle particulier dans la suite ; on le 
note $\E^+_\nu$ et, lorsque $\nu = 0$, on s'affranchira de l'indice et 
notera simplement $\E^+$. D'un point de vue analytique, $\E^+_\nu$ 
s'identifie à l'anneau des fonctions analytiques convergentes bornées 
sur $D_\nu$.

Le Frobenius $\phi$ défini par $\phi(\sum a_i u^i) = \sum \sigma(a_i) 
u^{pi}$ induit des morphismes d'anneaux $\phi : \Sk_\nu \to \Sk_{\nu/p}$ 
et $\phi : \E^+_\nu \to \E^+_{\nu/p}$ et donc, en particulier, puisque 
$\nu \geq 0$, il induit des endomorphismes de $\Sk_\nu$ et $\E^+_\nu$.

\medskip

On introduit la \emph{valuation de Gauss} $v_\nu$ définie par :
$$v_\nu (f) = \inf_{i \in \N} (\val(a_i) + i \nu)$$
pour une série $f = \sum_{i \in \N} a_i u^i \in \E^+_\nu$. 
On vérifie sans difficulté que $v_\nu$ vérifie les propriétés 
suivantes : pour tous $f,g \in \E_\nu$, on a $v_\nu(fg) = v_\nu(f) + 
v_\nu(g)$ et $v_\nu(f+g) \geq \min (v_\nu(f), v_\nu(g))$. De plus, une 
série $f$ comme précédemment est dans $\Sk_\nu$ si, et seulement si 
$v_\nu(f) \geq 0$.
Comme $\nu$ est supposé rationnel, la quantité $\val(a_i) + i \nu$ 
varie dans le sous-groupe discret $\Z + \nu \Z$ de $\R$. Il en résulte que 
$v_\nu$ prend également ses valeurs dans ce sous-groupe discret, et est 
donc une valuation discrète. Concrètement, si $\frac a b$ est une 
écriture de $\nu$ sous forme irréductible, alors $\Z + \nu \Z = \frac 1 
b \Z$ et, si $s$ et $t$ désignent deux entiers tels que $as-bt = 1$, 
l'élement $\frac{u^s}{p^t}$ a pour valuation $\frac 1 b$.
On définit le \emph{degré de Weierstrass} $\deg_\nu(f)$ d'une série $f 
\in \E^+_\nu$ non nulle comme le plus petit entier $i$ tel que $v_\nu(f) 
= \val(a_i) + i \nu$. On vérifie que, pour $f, g \in \E^+_\nu$ tous les 
deux non nuls, on a $\deg_\nu(fg) = \deg_\nu(f) + \deg_\nu(g)$. On a 
alors la proposition suivante qui implique, en particulier, que 
$\E^+_\nu$ est un anneau euclidien (pour le stathme $\deg_\nu$), ce qui 
s'avèrera fort utile au \S \ref{sec:algo} pour les applications 
algorithmiques.

\begin{prop}[Division euclidienne]
Soient $f$ et $g$ deux éléments de $\Sk_\nu$ avec $v_\nu(g) \geq 
v_\nu(f)$. Alors, il existe un unique couple $(q,r) \in \Sk_\nu^2$ 
tel que (1)~$r$ est un \emph{polynôme} de degré $< \deg_\nu(f)$ et
(2)~on a la relation $g = fq + r$.
\end{prop}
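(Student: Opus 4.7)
\medskip

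\noindent\textbf{Esquisse de preuve.}
L'unicit� suit imm�diatement de la multiplicativit� du degr� de Weierstrass rappel�e ci-dessus : si l'on avait deux d�compositions $fq + r = fq' + r'$ avec $q \neq q'$ et $r, r'$ des polyn�mes de degr� $< d := \deg_\nu(f)$, alors $\deg_\nu\bigl(f(q-q')\bigr) = d + \deg_\nu(q-q') \geq d$ tandis que $\deg_\nu(r'-r) < d$, ce qui est contradictoire.

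Pour l'existence, je commencerais par d�composer $f = f_0 + u^d \tilde f$, o� $f_0 = \pi_{<d}(f)$ est la partie polynomiale de degr� $< d$ et $\tilde f = \sum_{j \geq 0} a_{d+j} u^j$. La condition $\deg_\nu(f) = d$ signifie exactement (a)~$\deg_\nu(\tilde f) = 0$ (le terme constant de $\tilde f$ r�alise $v_\nu(\tilde f) = v_\nu(f) - d\nu$), et (b)~$v_\nu(f_0) \geq v_\nu(f) + \varepsilon$ pour un \og gap \fg{} $\varepsilon > 0$ uniforme, garanti par la discr�tion du groupe $\Z + \nu \Z$ des valeurs prises par $v_\nu$. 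En appliquant � l'identit� $g = fq + r$ l'op�rateur $R_d \circ \pi_{\geq d}$ (division par $u^d$ apr�s projection sur la partie de degr� $\geq d$), le probl�me se reformule alors en
$$\tilde f \cdot q = w - B(q)\,, \qquad \text{o�} \quad w := R_d\bigl(\pi_{\geq d}(g)\bigr)\,, \quad B(q) := R_d\bigl(\pi_{\geq d}(f_0 q)\bigr)\,,$$
le reste se reconstruisant ensuite par $r = g - fq$. Un calcul direct sur les coefficients, fond� sur (a), montre que $\tilde f$ est inversible dans $\E^+_\nu$ et que la multiplication par $\tilde f$ induit une bijection de $\Sk_\nu$ sur $\{h \in \E^+_\nu : v_\nu(h) \geq v_\nu(\tilde f)\}$ ; la propri�t� (b) fournit par ailleurs l'estimation clef $v_\nu(B(q)) \geq v_\nu(\tilde f q) + \varepsilon$.

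On construit alors $q$ comme limite de l'it�ration $q_0 = 0$, $q_{n+1} := \tilde f^{-1}\bigl(w - B(q_n)\bigr)$. L'estimation pr�c�dente entra�ne $v_\nu(q_{n+1} - q_n) \geq v_\nu(q_n - q_{n-1}) + \varepsilon$, et la compl�tude de $\Sk_\nu$ pour la valuation $v_\nu$ (v�rifiable coefficient par coefficient en s'appuyant sur la compl�tude de $K_0$) donne la convergence. L'hypoth�se $v_\nu(g) \geq v_\nu(f)$ intervient pour assurer $v_\nu(w) \geq v_\nu(\tilde f)$, garantissant que chaque $q_n$ appartient effectivement � $\Sk_\nu$. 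Le point que j'attends le plus d�licat est le contr�le pr�cis de la multiplication par $\tilde f^{-1}$ : m�me si cet inverse ne vit en g�n�ral que dans $\E^+_\nu$, il faut v�rifier qu'il envoie bien la \og tranche \fg{} $\{h \in \E^+_\nu : v_\nu(h) \geq v_\nu(\tilde f)\}$ dans $\Sk_\nu$. Ce point, combin� au contr�le uniforme du gap $\varepsilon$ (qui repose sur la rationalit� de $\nu$), concentre l'essentiel de la substance analytique de la preuve.
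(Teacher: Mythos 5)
Votre esquisse est correcte et, l'article n'incluant pas de preuve de cette proposition, l'argument d'it\'eration de point fixe que vous proposez est tout \`a fait adapt\'e : tous les contr\^oles de valuation annonc\'es se v\'erifient. En \'ecrivant $\nu = a/b$ sous forme irr\'eductible, on a bien (a) $v_\nu(\tilde f) = v_\nu(f) - d\nu = \val(a_d) \in \Z$ et $\deg_\nu(\tilde f) = 0$, et (b) $v_\nu(f_0) \geq v_\nu(f) + \frac 1 b$ par discr\'etion du groupe $\Z + \nu\Z$, d'o\`u $v_\nu(B(q)) \geq v_\nu(\tilde f q) + \frac 1 b$ ; l'hypoth\`ese $v_\nu(g) \geq v_\nu(f)$ donne $v_\nu(w) \geq v_\nu(\tilde f)$, comme vous le dites, ce qui amorce l'it\'eration dans $\Sk_\nu$.

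Deux remarques n\'eanmoins. D'une part, vous surestimez la difficult\'e du point que vous jugez le plus d\'elicat : la tranche ne pose aucun probl\`eme, puisque $\Sk_\nu = \{h \in \E^+_\nu \mid v_\nu(h) \geq 0\}$ (l'\'egalit\'e est imm\'ediate sur les d\'efinitions, la condition $\val(a_i)+i\nu \geq 0$ terme \`a terme \'equivalant \`a $v_\nu \geq 0$). La multiplicativit\'e de $v_\nu$ montre alors directement que $\tilde f^{-1}$ envoie $\{v_\nu \geq v_\nu(\tilde f)\}$ dans $\Sk_\nu$. D'autre part, le point que vous exp\'ediez trop vite --- l'inversibilit\'e de $\tilde f$ dans $\E^+_\nu$ --- m\'erite un argument explicite, et ne peut surtout pas \^etre tir\'e du th\'eor\`eme de pr\'eparation de Weierstrass, que l'article pr\'esente justement comme un \emph{corollaire} de la proposition que vous d\'emontrez. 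L'argument direct reste ais\'e : apr\`es normalisation de $\tilde f$ par $p^{-\val(a_d)}$ puis par son coefficient constant (une unit\'e de $\O_{K_0}$), on \'ecrit $\tilde f = 1 - h$ avec $h \in \Sk_\nu$ sans terme constant ; la s\'erie $\sum_{n \geq 0} h^n$ est formellement bien d\'efinie (chaque coefficient en $u^i$ ne re\c{c}oit des contributions que des $h^n$ avec $n \leq i$), appartient \`a $\Sk_\nu$, et inverse $\tilde f$. C'est l\`a que r\'eside la vraie substance, plut\^ot que dans la tranche.
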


Comme corollaire de cette proposition, on démontre le théorème de 
préparation de Weierstrass qui affirme que tout élément $f \in \E^+_\nu$ 
s'écrit comme un produit $f_1 f_2$ où $f_1$ est un \emph{polynôme} et 
$f_2$ un élément inversible de $\E^+_\nu$. Si, en outre, $f$ est pris 
dans $\Sk_\nu$, alors on peut choisir $f_1$ et $f_2$ de sorte qu'ils 
appartiennent eux aussi à $\Sk_\nu$ et, de surcroît, que $f_2$ soit 
inversible dans cet anneau (et pas uniquement dans $\E^+_\nu$).

\medskip

On rappelle enfin qu'à chaque élément $f = \sum a_i u^i \in \E^+_\nu$, 
on peut associer un polygone de Newton $F$ défini comme l'enveloppe 
convexe des points de coordonnées $(i,\val(a_i))$ et d'un point à 
l'infini de coordonnées $(0,\infty)$ ; il s'agit donc d'un sous-ensemble 
convexe de $\R^2$. Le fait que $f \in \E^+_\nu$ entraîne que les pentes 
de ce polygone qui sont strictement inférieures à $-\nu$, comptées avec 
multiplicité\footnote{La multiplicité d'une pente est, par définition, 
sa longueur sur l'axe des abscisses.}, sont en nombre fini. De surcroît, 
elles correspondent aux valuations des racines de $f$ (vue comme 
fonction analytique) dans le disque $D_\nu$, comptées également avec
multiplicité.

On en déduit que les pentes $< - \nu$ du produit $fg$ sont exactement 
les réunion des pentes $< - \nu$ de $f$ et de $g$, comptées avec 
multiplicité. Attention, ceci n'est plus vrai pour les pentes $\geq - 
\nu$. Un contre-exemple très simple avec $\nu = 0$ est donné par $f = 
1-u$ et $g = \frac 1{1-u} = 1 + u + u^2 + \cdots$. En effet, $f$ a alors 
une unique pente $0$ de multiplicité $1$, $g$ a une unique pente $0$ de 
multiplicité $+\infty$, mais pourtant le produit $fg = 1$ n'a aucune 
pente. On a toutefois le lemme suivant qui nous sera utile dans la 
suite.

\begin{lemme}
\label{lem:prodNewton}
Soient $f$ et $g$ deux éléments de $\E^+_\nu$. Soit $\mu$ un nombre
rationnel. On note $m_f$ (resp. $m_g$) la multiplicité (éventuellement
nulle) de la pente $\mu$ dans le polygone de Newton de $f$ (resp. de
$g$). On suppose que l'on est dans un des deux cas suivants (non
exclusifs) :
\begin{enumerate}[(i)]
\item les deux multiplicités $m_f$ et $m_g$ sont finies ;
\item l'une des multiplicités parmi $m_f$ et $m_g$ est nulle.
\end{enumerate}
Alors la multiplicité de la pente $\mu$ dans le polygone de Newton de 
$fg$ est $m_f + m_g$.
\end{lemme}

\begin{proof}
Quitte à remplacer, dans la définition de $\Sk_\nu$, le corps des 
coefficients $K_0$ par une extension finie totalement ramifiée, puis à 
effectuer un changement de variables et à multiplier $f$ et $g$ par des 
puissances adéquates de l'uniformisante de $K_0$, on peut supposer que 
$\mu = 0$, que $f$ et $g$ sont à coefficients dans $\Sk$ et que 
$v_0(f) = v_0(g) = 0$.
Soient $\bar f$ et $\bar g$ les réductions respectives de $f$ et $g$ 
modulo l'idéal maximal de $\O_{K_0}$ ; ce sont \emph{a priori} des 
éléments de l'anneau $k[[u]]$ des séries formelles à coefficients dans 
$k$, qui ne sont pas nuls d'après la condition sur $v_0$ qui a été 
supposée. Soit $v_f$ (resp. $v_g$) la valuation de $\bar f$ (resp. 
$\bar g$).

Dans le cas (i), les séries $\bar f$ et $\bar g$ sont des polynômes
et on a $\deg(\bar f) = v_f + m_f$ et $\deg(\bar g) = v_g + m_g$. 
Ainsi $\deg(\bar f \bar g) = (v_f + v_g) + (m_f + m_g)$. Étant donné
que $v_f + v_g$ est la valuation du produit $\bar f \bar g$, on en
déduit que la multiplicité de la pente $\mu = 0$ dans le polygone de 
Newton de $fg$ est $m_f + m_g$, comme souhaité.

Passons maintenant au cas (ii). On suppose $m_f = 0$ pour fixer les idées. 
On peut supposer de surcroît que $m_g = +\infty$ car sinon la situation 
relève du cas précédent. La série $\bar f$ est alors un monôme, tandis 
que $\bar g$ n'est pas un polynôme. On en déduit que $\bar f \bar g$ 
n'est pas non plus un polynôme et donc que la multiplicité de la pente 
$\mu = 0$ dans le polygone de Newton de $fg$ est infinie. C'est bien ce 
que l'on voulait démontrer.
\end{proof}

\subsubsection{La méthode de Génestier et Lafforgue}
\label{subsec:GL}

Soit $D$ un $(\phi,N)$-module filtré effectif admissible dont la 
représentation galoisienne semi-stable correspondante est notée $V$. Le 
premier alinéa du théorème \ref{theo:kisin} affirme qu'il existe un 
module de Breuil-Kisin $\Mk_0$ tel que $\Tst(\Mk_0)$ soit un réseau
pour
$G_\infty$ à l'intérieur de $V$, tandis que le second alinéa de ce même 
théorème montre que $\Dk = \Mk_0[1/p]$ ne dépend pas du choix de 
$\Mk_0$. Ainsi, l'espace $\Dk$ muni de l'action de $\phi$ est 
canoniquement associé à $D$. Le but de ce numéro est d'expliquer, en 
suivant \cite{genlaf}, comment construire $\Dk$ directement à partir de 
$D$, sans passer par les représentations galoisiennes.

\medskip

En suivant la terminologie de Génestier et Lafforgue, la première étape 
consiste à associer à $D$ une structure de Hodge-Pink. Soit $\hat \Sk$ 
le complété de $\E^+$ pour la topologie $E(u)$-adique (\emph{i.e.} celle 
relative à l'idéal principal engendré par $E(u)$). La flèche $u \mapsto 
\pi + u_\pi$ définit un isomorphisme canonique $\E^+/E(u)^n \to 
K[u_\pi]/u_\pi^n$ pour tout entier $n$ et donc, par passage à la limite, 
un isomorphisme canonique entre $\hat \Sk$ et l'anneau $K[[u_\pi]]$ des 
séries formelles à coefficients dans $K$. En particulier, cette
identification permet de définir une structure canonique de $K$-algèbre
sur $\hat \Sk$. D'autre part, on remarque que l'idéal principal $(E(u))$ 
correspond, dans $K[[u_\pi]]$, à l'idéal maximal $u_\pi \:
K[[u_\pi]]$. Ainsi l'identification $\hat \Sk \simeq K[[u_\pi]]$ se prolonge
en un isomorphisme canonique $\hat \Sk[\frac 1{E(u)}] \simeq K((u_\pi))$.
La dérivation $u \: \frac d{du}$ envoie $E(u)^h$ sur un multiple de 
$E(u)^{h-1}$ pour tout $h$ et définit de ce fait un endomorphisme 
$K_0$-linéaire de $\hat \Sk$ ; on le note $\hat N$. \emph{Via} 
l'identification $\hat \Sk \simeq K[[u_\pi]]$, on a $\hat N = (u_\pi + 
\pi) \frac d {du_\pi}$. Ceci montre en particulier que $\hat N$ est 
$K$-linéaire (et pas seulement $K_0$-linéaire). Bien sûr, on dispose 
également de la formule de Leibniz habituelle $\hat N(ab) = a \hat N(b) 
+ b \hat N(a)$ pour tous $a,b \in \hat \Sk$.

Étant donné un $(\phi,N)$-module filtré $D$, on s'intéresse à l'espace 
$\hat \Dk = \hat \Sk[\frac 1{E(u)}] \otimes_{\phi, K_0} D$ où le $\phi$ 
en indice dans le produit tensoriel signifie que $\Sk[\frac 1{E(u)}]$ 
est vu comme une $K_0$-algèbre \emph{via} le Frobenius $\phi : K_0 \to 
K_0 \to \Sk[\frac 1{E(u)}]$ où la première flèche est le Frobenius 
$\sigma$ et la seconde est l'inclusion canonique. En \og développant 
\fg\ le produit tensoriel, on obtient une identification canonique
$\hat \Dk \simeq \hat \Sk[\frac 1{E(u)}] \otimes_K D^\phi_K$ avec 
$D^\phi_K = K \otimes_{\phi,K_0} D$.
Le Frobenius $\phi$ définit un isomorphisme $K_0$-linéaire $K_0 
\otimes_{\phi,K_0} D$ qui s'étend de façon unique en un isomorphisme
$K$-linéaire $\phi_K : D^\phi_K \to D_K$. L'opérateur de monodromie
$N$ s'étend lui aussi par linéarité en un endomorphisme $N_K$ de $D_K$. 
On appelle également $N^\phi_K$ l'endomorphisme de 
$D^\phi_K$ défini par $N^\phi_K = p \otimes N$. La relation $N \phi = p 
\phi N$ se réécrit alors $N_K \phi_K = \phi_K N^\phi_K$. On définit, 
à présent, un opérateur différentiel $\hat N$ agissant sur $\hat \Dk
\simeq \hat \Sk[\frac 1{E(u)}] \otimes_K D^\phi_K$
par la formule 
$$\hat N = \hat N \otimes \id + \id \otimes N^\phi_K.$$
La relation de Leibniz est à nouveau vérifiée pour ce dernier $\hat
N$ : pour tout $s \in \hat \Sk$ et tout $x \in \hat \Dk$, on a $\hat
N(sx) = \hat N(s) \: x + s \: \hat N(x)$.

\medskip

On est enfin prêt à introduire les structures de Hodge-Pink de
Génestier et Lafforgue :

\begin{deftn}[Génestier-Lafforgue]
Soit $D$ un $(\phi,N)$-module filtré.
Une \emph{structure de Hodge-Pink} $V_D$ sur $D$ est un $\hat 
\Sk$-module libre de rang $d$ inclus dans $\hat \Sk[\frac 1{E(u)}] 
\otimes_{\phi, K_0} D$.

On dit que $V_D$ satisfait à la \emph{transversalité de Griffiths} 
si $\hat N(V_D) \subset \frac 1{E(u)} V_D$.
\end{deftn}

Une structure de Hodge-Pink sur $D$ définit une filtration 
$\Fil^h_\HP D_K$ indexée par $\Z$ sur l'espace $D_K = K \otimes_{K_0} 
D$ déterminée par la relation :
$$\phi_K^{-1}(\Fil^h_\HP D_K) = \text{image de } E(u)^h V_D \cap (\hat 
\Sk \otimes_{K_0} D) \text{ dans } \Sk/E(u)\Sk \otimes_{K_0} D$$
ce dernier espace s'identifiant à $D^\phi_K$ par l'intermédiaire de 
l'isomorphisme canonique $\Sk/E(u)\Sk \to K$, $u \mapsto \pi$. Le fait 
que $\Fil^h_\HP D_K = D_K$ pour tout $h \leq 0$ est équivalent à $\hat 
\Sk \otimes_{K_0} D \subset V_D$. De même $\Fil^{r+1}_\HP D_K = 0$ si, 
et seulement si $V_D \subset E(u)^{-r} \hat \Sk \otimes_{K_0} D$.

\begin{lemme}
\label{lem:uniqueHP}
Soit $D$ un $(\phi,N)$-module filtré.
Alors, il existe une unique structure de Hodge-Pink $V_D$ sur $D$
satisfaisant à la transversalité de Griffiths et dont la filtration
associée s'identifie à la filtration $\Fil^h D_K$ donnée.
\end{lemme}

\begin{proof}
Voir lemme 1.3 de \cite{genlaf}.
\end{proof}

\medskip

Pour pouvoir continuer à appliquer la méthode de Génestier et Lafforgue, 
on a besoin d'une structure de Hodge-Pink incluse dans $\hat \Sk 
\otimes_{\phi,K_0} D$. Or cela n'est manifestement pas le cas de $V_D$. 
Pour se ramener au cas où cette hypothèse est satisfaite, on fixe un 
entier $r$ supérieur ou égal à tous les poids de Hodge-Tate de la 
représentation associée au $(\phi,N)$-module filtré $D$ et on 
\emph{twiste} $D$ comme ceci : on pose $D' = D$ que l'on munit de 
$\phi' = \frac{\phi}{p^r}$ et de la filtration décalée définie par 
$\Fil^h D' = \Fil^{h+r} D$ pour tout $h \in \Z$. Le structure de 
Hodge-Pink associée à $D'$ est alors $V_{D'} = E(u)^r V_D$ et elle 
satisfait à l'hypothèse de Génestier et Lafforgue.
Toujours suivant ces deux auteurs, étant donné $L$ un 
sous-$\O_{K_0}$-module de $D$, on définit à présent une suite 
$(\beta_n(L))_{n \geq 0}$ de sous-$\Sk$-modules de $\E^+ \otimes_{K_0} 
D' = \E^+ \otimes_{K_0} D$ par la formule récurrente suivante :
\begin{eqnarray}
\beta_0(L) & = & \Sk \otimes_{\O_{K_0}} L, \nonumber\\
\beta_{n+1}(L) &=& \phi'\big((\Sk \otimes_{\phi, \Sk} \beta_n(L)) \cap
V_{D'}\big) 
\,\,=\,\, \frac{\phi}{p^r}\big((\Sk \otimes_{\phi, \Sk} \beta_n(L)) \cap
E(u)^r V_D\big).\label{eq:betan}
\end{eqnarray}
La formule que l'on vient d'écrire présente deux petites subtilités : 
\emph{primo}, l'intersection $(\Sk \otimes_{\phi, \Sk} \beta_n(L)) 
\cap E(u)^r V_D$ est calculée dans l'espace $\hat \Sk[\frac 1 {E(u)}] 
\otimes_{\phi, K_0} D$ et \emph{secundo}, la lettre $\phi$ (resp. 
$\phi'$) désigne ici l'application \emph{linéaire} $\E^+ \otimes_{\phi, 
K_0} D \to \E^+ \otimes D$ déduite du Frobenius $\phi$ (resp. $\phi' = 
\frac{\phi}{p^r}$) agissant sur $D$. On prendra garde au fait que l'on 
ne peut pas définir un Frobenius sur $\hat \Sk[\frac 1 {E(u)}]$ étant 
donné qu'aucune puissance de $\phi(E(u))$ n'est divisible par $E(u)$ ; 
ainsi on ne peut pas non plus définir de Frobenius sur le gros espace 
$\hat \Sk[\frac 1 {E(u)}] \otimes_{\phi, K_0} D$ ; toutefois, cela n'est 
aucunement nécessaire car l'intersection que l'on considère est incluse 
dans l'espace $\E^+ \otimes_{\phi, K_0} D$ sur lequel les opérateurs 
$\phi$ et $\phi'$ sont bien définis.

Lorsque $L = D$, on note simplement $\beta_n$ à la place de 
$\beta_n(D)$. Les $\beta_n$ sont des sous-$\E^+$-modules libres de $\E^+ 
\otimes_{\phi,K_0} D$ et on démontre sans difficulté, par récurrence sur 
$n$, que $\E^+ \otimes_\Sk \beta_n(L) = \beta_n$ pour tout entier 
$n$ et tout sous-$\O_{K_0}$-module $L$ de $D$.
Grâce à cette remarque, le théorème suivant résulte des travaux de 
Génestier et Lafforgue.

\begin{theo}[Génestier-Lafforgue]
\label{theo:GL}
Pour tout entier $n$, l'espace $\E^+_{1/(ep^n)} \otimes_{\E^+} \beta_n 
\subset \E^+_{1/(ep^n)} \otimes_{K_0} D$ est stable par l'opérateur 
$(\frac{E(u)}{E(0)})^r \phi \otimes \phi$. De plus, on a un isomorphisme 
canonique $\xi_n : \E^+_{1/(ep^n)} \otimes_{\E^+} \beta_n \to 
\E^+_{1/(ep^n)} \otimes_{\E^+} \Dk$ rendant le diagramme suivant 
commutatif :
$$\xymatrix @C=50pt {
\E^+_{1/(ep^n)} \otimes_{\E^+} \beta_n \ar[r]^-{\sim}_-{\xi_n} 
\ar[d]_-{(\frac{E(u)}{E(0)})^r \phi \otimes \phi} &
\E^+_{1/(ep^n)} \otimes_{\E^+} \Dk \ar[d]^-{\phi \otimes \phi} \\
\E^+_{1/(ep^n)} \otimes_{\E^+} \beta_n \ar[r]^-{\sim}_-{\xi_n} &
\E^+_{1/(ep^n)} \otimes_{\E^+} \Dk }$$
où la flèche de droite provient du Frobenius $\phi$ agissant sur 
$\Dk$ donné par la théorie de Breuil-Kisin.
\end{theo}

\begin{rem}
\label{rem:GL}
Il suit du résultat de Génestier et Lafforgue que l'on vient d'énoncer 
que si $L$ est un $\O_{K_0}$-réseau de $D$, il existe une constante
$C$ et un module de Breuil-Kisin $\Mk_{1/(ep^n)}$ tels que :
$$\Sk_\nu \otimes_\Sk \beta_n(L) \subset
\Mk_{1/(ep^n)} \subset p^{-C} \cdot (\Sk_\nu \otimes_\Sk \beta_n(L)).$$
En réalité, en examinant de près la démonstration de Génestier et 
Lafforgue, on se rend compte que la constante $C$ ci-dessus dépend de 
façon explicite et \emph{polynômiale}\footnote{Cela sera très important 
dans la suite lorsque l'on étudiera la complexité de l'algorithme du 
calcul de la semi-simplifiée modulo $p$.} de la dimension $d$, de 
l'entier $n$ et du plus petit entier $v$ tel que $p^v \phi(L) \subset 
L$.
\end{rem}

\subsection{Surconvergence des modules de Breuil-Kisin}
\label{sec:sousconv}

Dans le \S \ref{subsec:GL}, nous avons été amené à considérer 
l'extension des scalaires à certains anneaux $\Sk_\nu$ de modules de 
Breuil-Kisin. L'objectif de cette partie est de montrer que ce foncteur 
d'extension des scalaires possède d'excellentes propriétés lorsque $\nu$ 
reste petit.

\medskip

On introduit pour cela la définition suivante, copie conforme de la 
définition des modules de Breuil-Kisin usuels.

\begin{deftn}
\label{def:modKisinnu}
Soient $\nu$ un nombre rationnel positif ou nul et $r$ un nombre entier 
strictement positif. Un \emph{module de Breuil-Kisin} sur 
$\Sk_\nu$ de hauteur $\leq r$ est la donnée d'un $\Sk_\nu$-module libre 
$\Mk_\nu$ de rang fini muni d'une application $\phi : \Mk_\nu \to
\Mk_\nu$ telle 
que :
\begin{enumerate}
\item pour tout $s \in \Sk_\nu$ et tout $x \in \Mk_\nu$, on a $\phi(sx) = 
\phi(s) \phi(x)$ ;
\item le sous-$\Sk_\nu$-module engendré par l'image de $\phi$ contient
$E(u)^r \Mk_\nu$.
\end{enumerate}
\end{deftn}

Pour un nombre rationnel $\nu \geq 0$, on note $\Mod^{r,\phi}_{/\Sk_\nu}$ la 
catégorie des modules de Kisin de hauteur $\leq r$ sur $\Sk_\nu$ et, si 
$0 \leq \nu \leq \nu'$, on note $F_{\nu \to \nu'} : 
\Mod^{r,\phi}_{/\Sk_\nu} \to \Mod^{r,\phi}_{/\Sk_{\nu'}}$ le foncteur 
d'extension des scalaires de $\Sk_\nu$ à $\Sk_{\nu'}$.

\begin{theo}
\label{theo:surconv}
Le foncteur $F_{0 \to \nu}$ est une équivalence de catégories dès que 
$\nu < \frac{p-1}{per}$.
\end{theo}

La démonstration de ce théorème va occuper toute la fin de cette partie. 
Elle repose sur les deux lemmes suivants que nous démontrerons 
respectivement dans le \S \ref{subsec:pleinfid} et le \S 
\ref{subsec:lemapprox}.

\begin{lemme}
\label{lem:pleinfid}
Si $\nu$ et $\nu'$ sont des nombres rationnels positifs ou nuls
vérifiant $\nu' \leq \nu < \frac{p-1}{er}$ et $\nu' < \frac{p-1}{per}$, 
alors le foncteur $F_{\nu' \to \nu}$ est pleinement fidèle.
\end{lemme}

\begin{lemme}
\label{lem:approx}
Pour tout $\nu < \frac{p-1}{per}$ 
et tout
objet $\Mk_\nu$ de $\Mod^{r,\phi}_ {/\Sk_\nu}$, il existe un objet $\Mk
\in \Mod^{r,\phi}_{/\Sk}$ et un isomorphisme $F_{0 \to \nu'} (\Mk) \to
F_{\nu \to \nu'}(\Mk_\nu)$ dans la catégorie
$\Mod^{r,\phi}_{/\Sk_{\nu'}}$ où $\nu'$ est défini comme le rationnel
solution de l'équation $\frac 1 {\nu'} = \frac 1 \nu - er$.
\end{lemme}

Expliquons à présent comment le théorème \ref{theo:surconv} se déduit
des deux lemmes précédents. Clairement la pleine fidélité de 
$F_{0 \to \nu}$ suit du lemme \ref{lem:pleinfid} en prenant $\nu'
= 0$. Il ne reste donc plus qu'à démontrer l'essentielle surjectivité.
Pour cela, on remarque qu'en termes plus concis mais plus abstraits, le 
lemme \ref{lem:approx} affirme que, si $\nu < \frac{p-1}{per}$ et si 
$\nu'$ est défini par l'égalité $\frac 1 {\nu'} = \frac 1 {\nu} - er$ 
alors, dans le diagramme suivant : 
$$\xymatrix @R=15pt @C=5pt { & 
\Mod^{r,\phi}_{/\Sk} \ar[ld]_-{F_{0 \to \nu}} \ar[rd]^-{F_{0 \to \nu'}} 
\\ \
\Mod^{r,\phi}_{/\Sk_{\nu}} \ar[rr]_-{F_{\nu \to \nu'}} & & 
\Mod^{r,\phi}_{/\Sk_{\nu'}} }$$ 
les foncteurs $F_{0 \to \nu'}$ et $F_{\nu \to \nu'}$ ont même image 
essentielle. L'essentielle surjectivité de $F_{0 \to \nu}$ suit alors
directement de la pleine fidélité des $F_{0 \to \nu'}$ et $F_{\nu \to 
\nu'}$, résultats que l'on connaît grâce au lemme \ref{lem:pleinfid}.

\subsubsection{Démonstration du lemme \ref{lem:pleinfid}}
\label{subsec:pleinfid}

Soient $\nu$ et $\nu'$ deux nombres rationnels tels que $0 \leq \nu' 
\leq \nu < \frac{p-1}{er}$ et $\nu' < \frac{p-1}{per}$. On se donne 
$\Mk_{\nu'}$ et $\Mk'_{\nu'}$ deux modules de Breuil-Kisin sur 
$\Sk_{\nu'}$, ainsi qu'un morphisme $f : \Sk_\nu \otimes_{\Sk_{\nu'}} 
\Mk_{\nu'} \to \Sk_\nu \otimes_{\Sk_{\nu'}} \Mk'_{\nu'}$ dans la 
catégorie $\Mod^{r,\phi}_{/\Sk_\nu}$. On souhaite démontrer que $f$ 
envoie $\Mk_{\nu'}$ sur $\Mk'_{\nu'}$.
L'argument de base se développe comme suit. Si on sait que 
$f(\Mk_{\nu'}) \subset A \otimes_{\Sk_{\nu'}} \Mk'_{\nu'}$, pour un certain 
$\Sk_{\nu'}$-module $A \subset \Sk_\nu$, alors 
$$\begin{array}{l} E(u)^r f(\Mk_{\nu'}) = f(E(u)^r \Mk_{\nu'}) \subset 
f(\left< \phi(\Mk_{\nu'}) \right>_{\Sk_{\nu'}})
= \left< f \circ \phi(\Mk_{\nu'}) \right>_{\Sk_{\nu'}} \medskip \\
\hphantom{E(u)^r f(\Mk_{\nu'})} = \left< \phi \circ f(\Mk_{\nu'}) 
\right>_{\Sk_{\nu'}}
\subset \left< \phi (A \otimes_{\Sk_{\nu'}} \Mk'_{\nu'}) \right>_{\Sk_{\nu'}}
\subset \left<\phi(A)\right>_{\Sk_{\nu'}} \otimes_{\Sk_{\nu'}} \Mk'_{\nu'}.
\end{array}$$
Ainsi, si l'on note $\frac\phi{E(u)^r}(A)$ le sous-$\Sk_{\nu'}$-module de 
$\Sk_\nu$ formé des $x$ tels que $E(u)^r x \in 
\left<\phi(A)\right>_{\Sk_{\nu'}}$, on a démontré que $f(\Mk_{\nu'}) 
\subset \frac\phi{E(u)^r}(A) \otimes_{\Sk_{\nu'}} \Mk'_{\nu'}$.
En appliquant le même argument avec $\frac\phi{E(u)^r}(A)$ à la place
de $A$, on 
obtient alors $f(\Mk_{\nu'}) \subset (\frac\phi{E(u)^r})^2(A)
\otimes_{\Sk_{\nu'}} \Mk'_{\nu'}$ 
et, ainsi de suite, $f(\Mk_{\nu'}) \subset (\frac\phi{E(u)^r})^n (A)
\otimes_{\Sk_{\nu'}} 
\Mk'_{\nu'}$ pour tout entier $n$.
Pour conclure, il reste donc à montrer que :
\begin{equation}
\label{eq:inclplfid}
\bigcap_{n \geq 0} \Big(\frac\phi{E(u)^r}\Big)^n (\Sk_\nu) = \Sk_{\nu'}.
\end{equation}

\begin{lemme}
\label{lem:phiSk}
Pour tout réel $\mu \in [0, \frac{p-1}{er}[$ on a 
$\frac{\phi}{E(u)^r}(\Sk_{1/\mu}) \subset \Sk_{1/\mu'}$ avec
$\mu' = \min \big( \frac 1 {\nu'}, \: p \mu - er \big)$.
\end{lemme}
\begin{proof}
Il suffit de montrer que si $f$ est un élément de $\Sk_{\nu'}$ tel que 
$E(u)^r f \in \phi(\Sk_{1/\mu})$, alors $f \in \Sk_{1/\mu'}$. Supposons 
donc qu'il existe $g \in \Sk_{1/\mu}$ tel que $E(u)^r f = \phi(g)$. Soit 
$F$ (resp. $G$) le polygone du Newton de $f$ (resp. de $g$). Si on 
appelle $\Phi$ la fonction de $\R^2$ dans $\R^2$ qui à $(x,y)$ associe 
$(px,y)$, l'épigraphe du polygone de Newton de $\phi(g)$ est $\Phi(G)$. 
On note encore $E$ l'épigraphe du polygone de Newton de $E(u)$.
Comme $\nu' < \frac{p-1}{per}$, la pente $- \frac 1 e$ ne peut avoir
une multiplicité infinie dans le polygone $F$. On déduit alors du lemme 
\ref{lem:prodNewton} et de l'égalité $E(u)^r f = \phi(g)$, que $E + F = 
\Phi(G)$.

Or, par définition, $G$ est inclus dans le demi-espace défini par 
l'inégalité $y + \frac x \mu \geq 0$. Ainsi les couples $(x,y)$ dans 
$\Phi(G)$ vérifie $y + \frac x {p\mu} \geq 0$. Si maintenant $(x,y) \in F$, 
on déduit de l'égalité $E + F = \Phi(G)$ que $(x+er,y) \in \Phi(G)$ et 
donc que $y + \frac {x+er}{p\mu} \geq 0$.
En d'autres termes, si on écrit $f = \sum_{i \in \N} a_i u^i$, on vient 
de démontrer que $\val(a_i) \geq -\frac {i + er}{p\mu}$ pour tout $i$.
Comme $\val(a_i)$ est un entier, on en déduit que $\val(a_i) \geq \lceil
-\frac {i + er}{p\mu} \rceil$ pour tout $i$ (où $\lceil x \rceil$ dénote
la partie entière supérieure du réel $x$). En particulier, si $i < 
p \mu - er$, on obtient $\val(a_i) \geq 0$. Si, au contraire, $i 
\geq p \mu - er$, on peut écrire :
$$\val(a_i) \geq -\frac {i + er}{p\mu} \geq - \frac i {p \mu - er}
\geq - \frac i {\mu'}$$
l'inégalité du milieu résultant de l'hypothèse faite sur $\mu$. Ainsi,
dans tous les cas (\emph{i.e.} quelle que soit la valeur de $i$), on a 
$\val(a_i) + \frac i {\mu'} \geq 0$ pour tout $i$. Cela signifie que $f 
\in \Sk_{1/\mu'}$ comme souhaité.
\end{proof}

Il suit du lemme \ref{lem:phiSk} que $(\frac{\phi}{E(u)^r})^n(\Sk_\nu) 
\subset \Sk_{1/\mu_n}$ où la suite $(\mu_n)_{n \geq 0}$ est définie par 
récurrence par $\mu_0 = \frac 1 \nu$ et $\mu_{n+1} = \min \big( \frac 1 
{\nu'}, \: p \mu_n - er \big)$ pour tout entier $n \geq 0$. Puisque $\nu 
< \frac {p-1}{er}$, on a $\mu_0 > \frac{er}{p-1}$ et il suit de là que 
la suite des $\mu_n$ converge vers $\frac 1 {\nu'}$ (s'il n'y avait pas 
le $\min$, elle tendrait par $+\infty$). On en déduit l'égalité 
\eqref{eq:inclplfid} de laquelle résulte, comme cela a déjà été 
expliqué, la pleine fidélité du foncteur $F_{\nu' \to \nu}$.

\subsubsection{Démonstration du lemme \ref{lem:approx}}
\label{subsec:lemapprox}

On en vient maintenant à la démonstration du lemme \ref{lem:approx}. On 
se donne pour cela $\nu$ et $\Mk_\nu$ comme dans l'énoncé. On fixe une 
$\Sk_\nu$-base de $\Mk_\nu$ et on note $G_0$ la matrice du Frobenius 
$\phi$ dans cette base. Par hypothèse, il existe une matrice $H_0$ telle 
que $H_0 G_0 = E(u)^r$.

\begin{lemme}
\label{lem:Snup}
Tout élément $g \in \Sk_\nu$ se décompose sous la forme $E(u)^r x + y$ 
avec $x = \sum_{i \geq 1/\nu'} x_i u^i \in \Sk_{\nu'}$ et $y \in \Sk$.
\end{lemme}

\begin{proof}
Comme $E(u)$ est un polynôme d'Eisenstein de degré $e$, on peut 
décomposer $E(u)^r$ sous la forme $E(u)^r = u^{er} - p F(u)$ où $F(u)$ 
est un polynôme à coefficients dans $\O_{K_0}$.
Soit $g = \sum_{i \in \N} a_i u^i$ un élément de $\Sk_\nu$. Il se 
décompose sous la forme $g = E(u)^r x_1 + y_1 + g_1$ avec
$$x_1 = \sum_{i \geq 1/\nu} a_i u^{i-er} 
= \sum_{i \geq 1/\nu'} a_{i+er} u^i \quad ; \quad
y_1 = \sum_{0 \leq i < 1/\nu} a_i u^i \quad ; \quad
g_1 = p F(u) \cdot x_1.$$
Un calcul facile montre, d'une part, que $x_1 \in \Sk_{\nu'}$, $y_1 \in 
\Sk$ et $g_1 \in \Sk_\nu$ et, d'autre part, que $v_{\nu'}(x_1) \geq 
v_\nu(g_1)$, $v_\nu(y_1) \geq v_\nu(g_1)$ et $v_\nu(g_1) \geq v_\nu(x_1) 
+ (1 - er\nu) > v_\nu(x_1) + \frac 1 {p-1}$.
Répétant maintenant la construction précédente à partir de $g_1$, puis
itérant le procédé, on obtient des suites $(x_n)$, $(y_n)$ et 
$(g_n)$ prenant leurs valeurs respectivement dans $\Sk_{\nu'}$, $\Sk$ et 
$\Sk_\nu$ telles que, pour tout entier $n$, on ait $g = E(u)^r (x_1 + 
x_2 + \cdots + x_n) + (y_1 + y_2 + \cdots + y_n) + g_n$. 
On dispose en outre des estimations suivantes sur les valuations :
$$v_{\nu'}(x_n) \geq v_\nu(g_1) + \frac{n-1}{p-1}
\quad ; \quad 
v_{\nu}(y_n) \geq v_\nu(g_1) + \frac{n-1}{p-1}
\quad ; \quad 
v_{\nu}(g_n) \geq v_\nu(g_1) + \frac{n}{p-1}$$
valables pour tout entier $n \geq 1$.
Il en résulte que les suites $(x_n)$, $(y_n)$ et $(g_n)$ tendent toutes 
les trois vers $0$. On peut donc définir $x = \sum_{n \in \N} x_n \in 
\Sk_{\nu'}$ et $y = \sum_{n \in \N} y_n \in \Sk$. Ces éléments vérifient
l'égalité $g = E(u)^r x + y$ ; on a donc bien établi la décomposition
annoncée.
\end{proof}

Par le lemme, la matrice $G_0$ se décompose sous la forme $G_0 = Y_0 - 
E(u)^r X_0$ où $X_0$ est une matrice à coefficients dans $\Sk_{\nu'}$ 
\og multiple de $u^{1/\nu'}$ \fg\ et $Y_0$ est à coefficients dans $\Sk$. 
On pose $P_0 = I + X_0 H_0$, où $I$ désigne la matrice identité (de 
taille adéquate). Manifestement, $P_0$ est à coefficients dans 
$\Sk_{\nu'}$ et la condition sur $X_0$ entraîne qu'elle est inversible.
En outre, la matrice produit $G_1 = P_0 G_0 \phi(P_0)^{-1} = (G_0 + 
E(u)^r X_0) \phi(P_0)^{-1} = Y_0 \phi(P_0)^{-1}$ est, elle, à 
coefficients dans $\Sk_{\nu'/p}$ puisque le premier facteur est à 
coefficients dans $\Sk$ et que le second est à coefficients dans 
$\Sk_{\nu'/p}$.

Comme la matrice $P_0 G_0 = Y_0$ est à coefficients dans 
$\Sk$, son déterminant $\delta$ appartient aussi à $\Sk$. Par ailleurs, 
celui-ci s'écrit comme le produit de $\det P_0$, qui est un élément 
inversible dans $\Sk_{\nu'}$, et de $\det G_0$ qui s'écrit, à son tour, 
comme le produit d'une certaine puissance $E(u)^N$ de $E(u)$ et d'une 
unité de $\Sk_\nu$. 
Ainsi $\delta = a E(u)^N$ où $a$ est un élément 
inversible de $\Sk_{\nu'}$. En particulier, le coefficient constant de 
$a$ est inversible dans $\O_{K_0}$. Grâce au lemme \ref{lem:prodNewton},
on en déduit que le polygone de Newton de 
$\delta$ passe par le point de coordonnées $(0,N)$ et contient un segment 
de pente $-\frac 1 e$ qui a une longueur $eN$ sur l'axe des abscisses. 
Comme ce polygone est entièrement situé au-dessus de l'axe des abscisses 
(puisque $\delta \in \Sk$), tous ses autres segments ont nécessairement 
une pente $\geq 0$. Comme ces autres segments sont exactement ceux qui 
forment le polygone de Newton de $a$, il s'ensuit que $a$ est en fait 
élément de $\Sk$ et, mieux encore, qu'il est inversible dans cet anneau.

Les diviseurs élémentaires de la matrice $P_0 G_0$ vue comme matrice à 
coefficients dans l'anneau principal $\E^+ = \Sk[1/p]$ sont donc tous (à 
multiplication par des unités près) des puissances de $E(u)$, qui est 
premier dans cet anneau. Lorsque l'on étend les scalaires à 
$\E^+_{\nu'}$, ces diviseurs élémentaires restent les mêmes et sont 
également ceux de $G_0$ puisque $P_0 G_0$ s'obtient manifestement à 
partir de $G_0$ en multipliant à gauche par une matrice inversible à 
coefficients dans $\Sk_{\nu'}$, et donc \emph{a fortioti} dans
$\E^+_{\nu'}$. Ceci entraîne que les exposants sur ces 
diviseurs élémentaires sont tous $\leq r$ et donc qu'il existe une 
matrice $H'$ à coefficients dans $\E^+$ vérifiant $H' P_0 G_0 
= E(u)^r$. Par ailleurs, vue la forme de $\delta$, il existe également
une matrice $H''$ à coefficients dans $\Sk$ telle que $H'' P_0 G_0 = 
E(u)^N$ pour un entier $N$ que l'on peut bien sûr choisir supérieur
à $r$. On en déduit que $E(u)^{N-r} H' = H''$ et donc que $E(u)^{N-r}
H'$ est à coefficients dans $\Sk$. De $v_0(E(u)) = 0$, on déduit que
$H'$ est également à coefficients dans $\Sk$.
La matrice produit $H_1 = \phi(P_0) H'$ est alors à coefficients dans 
$\Sk_{\nu'/p}$ et vérifie $H_1 G_1 = \phi(P_0) H' P_0 G_0 \phi(P_0)^{-1} 
= E(u)^r$. On en déduit que la matrice $G_1$ définit un module de 
Breuil-Kisin $\Mk_{\nu'/p}$ sur $\Sk_{\nu'/p}$ qui est isomorphe à 
$\Mk_\nu$ après extension des scalaires à $\Sk_{\nu'}$. Or un calcul 
immédiat montre que $\frac{\nu'} p = \frac {\nu} {p - p er \nu}$ et 
donc, par l'hypothèse faite sur $\nu$, que $\frac{\nu'} p < \nu$.

On itère maintenant le processus précédent. On construit comme ceci des 
matrices $G_n$ à coefficients dans $\Mk_{\nu_n}$ définissant des modules 
de Breuil-Kisin sur ces anneaux qui deviennent isomorphes à $\Mk_{\nu}$ 
après extension des scalaires à $\Sk_{\nu'}$. Ici, la suite réelle 
$(\nu_n)_{n \geq 0}$ est définie récursivement par $\nu_0 = \nu$ et 
$\nu_{n+1} = \frac {\nu_n} {p- p er\nu_n}$. De la décroissance de la 
suite des $\nu_n$ (déjà mentionnée précédemment), on déduit que $\nu_n 
\leq \nu \cdot (p - per\nu)^{-n}$. Comme le facteur élevée à la 
puissance $(-n)$ dans l'expression précédente est par hypothèse $> 1$, 
on obtient la convergence vers $0$ de la suite des $\nu_n$. Il en 
résulte que, si $\nu'_n$ est le réel associé à $\nu_n$, la suite des 
$\nu'_n$ tend aussi vers $0$ quand $n$ tend vers l'infini. La condition 
de \og divisibilité par $u^{1/\nu'}$ \fg\ qui apparaît dans le lemme 
\ref{lem:Snup} assure ainsi que le produit infini $\prod_{n \geq 0} (I + 
X_n H_n)$ converge dans $\Sk_{\nu'}$ vers une limite $P_\infty$. La 
matrice $G_\infty = P_\infty G_0 \phi(P_\infty)^{-1}$ est alors la 
limite des $G_n$, et elle est donc à coefficients dans $\Sk$. Comme 
précédemment, on montre qu'il existe $H_\infty$ tel que $H_\infty 
G_\infty = E(u)^r$, et donc que $G_\infty$ définit un module de 
Breuil-Kisin sur $\Sk$. Enfin, l'égalité $G_\infty = P_\infty G_0 
\phi(P_\infty)^{-1}$ montre que celui-ci est isomorphe à $\Mk_\nu$ après 
extension des scalaires à $\Sk_{\nu'}$.

\section{L'algorithme de calcul de la semi-simplifiée modulo $p$}
\label{sec:algo}

Le but de cette seconde partie est de décrire un algorithme qui prend en 
entrée une représentation semi-stable $V$ --- donnée par l'intermédiaire 
de son $(\phi,N)$-module filtré admissible $D$ --- et renvoie sa 
semi-simplifiée modulo $p$ notée $\bar V^\ss$. Quitte à twister par une 
puissance du caractère cyclotomique, on peut toujours supposer que tous 
les poids de Hodge-Tate de $V$ sont positifs ou nuls ; du point de vue 
des $(\phi,N)$-modules filtrés, cela revient à supposer que $D$ est
effectif. Cette hypothèse permet de simplifier l'exposition ; nous la
ferons systématiquement dans la suite.

De façon très schématique, l'algorithme que nous voulons décrire se 
décompose en trois étapes que voici :

\medskip

\emph{Étape 1} : Calcul du module $\Dk$ associé à $D$ (voir \S 
\ref{subsec:GL} pour la définition de $\Dk$)

\medskip

\emph{Étape 2} : Calcul d'un module de Breuil-Kisin $\Mk$ à l'intérieur 
de $\Dk$

\medskip

\emph{Étape 3} : Calcul de la représentation $\bar V^\ss$ à partir de 
$\Mk/p\Mk$

\medskip

\noindent
Chacune de ces trois étapes fait l'objet d'une sous-partie de ce numéro :
l'étape 1 est traitée au \S \ref{subsec:etape1}, l'étape 2 au \S 
\ref{subsec:etape2} et l'étape 3 au \S \ref{subsec:etape3}.
Le \S \ref{subsec:repr} regroupe un certain nombre de 
préliminaires algorithmiques concernant la représentation des objets sur 
machine ; ces précisions aboutiront notamment à une explication précise 
et rigoureuse des spécifications de l'algorithme que l'on cherche à 
écrire, ainsi que quelques hypothèses simples sur le modèle de calcul 
que l'on considère. Enfin, au \S \ref{subsec:complexite}, on détermine 
la complexité de l'algorithme.

\subsection{Représentation des objets sur machine}
\label{subsec:repr}

Avant toute chose, il est important d'expliquer comment les différents 
objets que nous serons amenés à manipuler, à savoir :
\begin{itemize}
\item les éléments des différents anneaux de nombres $p$-adiques :
$\O_{K_0}$, $K_0$, $\O_K$ et $K$,
\item les éléments des anneaux de séries $\Sk_\nu$ et $\E^+_\nu$,
\item les $(\phi,N)$-modules filtrés (c'est l'entrée de notre
algorithme !),
\item les $\F_p$-représentations semi-simples de $G_K$ (c'est la
sortie des notre algorithme !), et
\item les modules de Breuil-Kisin sur $\Sk$ et $\Sk_\nu$,
\end{itemize}
peuvent se représenter sur machine. C'est l'objet de ce numéro.

\subsubsection*{Les corps $p$-adiques et leurs éléments}

Par définition, un élément $x \in \O_{K_0} = W(k)$ est une suite 
\emph{infinie} $x = (x_1, x_2, \ldots)$ de composantes appartenant à 
$k$. Or, stocker --- et \emph{a fortiori} manipuler --- une telle suite 
sur machine est tout simplement impossible, la mémoire d'un ordinateur 
n'étant certainement pas infinie.

Traditionnellement (comme c'est d'ailleurs le cas pour les nombres 
réels), on résout ce problème en travaillant avec des approximations. 
Concrètement, un élément $x \in \O_{K_0}$ sera représenté en machine par 
un couple $(n, \tilde x)$ où $n$ est un entier positif --- la 
\emph{précision} --- et $\tilde x$ est une \emph{approximation} de $x$ 
vivant dans un anneau exact et vérifiant $x \equiv \tilde x \pmod{p^n}$. 
Si $\O_{K_0} = \Z_p[X]/F(X)$ pour un certain polynôme $P$ à coefficients 
entiers, l'anneau exact dont il vient d'être question peut, par exemple, 
être $\Z[X]/F(X)$ et, bien sûr, au lieu de considérer $\tilde x$ 
lui-même, on peut se contenter de travailler avec $\tilde x \text{ mod } 
p^n$, ce qui revient à dire que l'on peut considérer que $\tilde x$ est 
élément de $\Z[X] /(p^n, P(X)) \simeq \Z_q/p^n$ (mais le modulo $p^n$ 
dépend alors de l'élément $x$ considéré). 

Pareillement, on représente les éléments de $K_0$ par une donnée de 
précision et une approximation qui vit dans l'anneau exact $\Z[1/p][X] 
/P(X)$. En ce qui concerne $\O_K$ et $K$, on procède de même en 
considérant un nouveau polynôme définissant l'extension $K/K_0$, qui 
peut par exemple être le polynôme $E(u)$. À l'instar de ce qui se passe 
avec les nombres réels, cette représentation n'empêche nullement 
d'effectuer des opérations ; par exemple, si $x$ et $y$ sont connus à 
précision $p^n$, alors on peut calculer $x+y$ et $xy$ à précision $p^n$ 
également, simplement en effectuant ces opérations modulo $p^n$.

\subsubsection*{Les anneaux de séries $\Sk_\nu$ et leurs éléments}

Attardons-nous à présent sur les anneaux $\Sk_\nu$ et $\Sk_\nu[1/p]$. Comme 
précédemment, on est contraint de représenter leurs éléments par une 
approximation complétée par des données supplémentaires décrivant la 
nature de cette approximation. Au vu des résultats de \cite{carlub} 
(voir en particulier \S 4), nous choisissons de représenter en machine 
une série $f = \sum a_i u^i \in \Sk_\nu[1/p]$ par les trois données 
suivantes :
\begin{itemize}
\item la \emph{précision} : un entier strictement positif $N$ et
un $N$-uplet d'entiers relatifs $(N_0, \ldots, N_{N-1})$
\item l'\emph{approximation} : pour $i \in \{0, \ldots, N-1\}$, des
éléments $\tilde a_i$ (vivant dans un anneau exact approximant $K_0$)
tels que $\tilde a_i \equiv a_i \pmod{p^{N_i}}$.
\item la \emph{garantie} : un nombre rationnel $g$ tel que $v_p(a_i)
\geq g - \nu i$ pour tout $i \geq N$.
\end{itemize}
Dans la suite, on appellera \emph{représentation PAG} (pour
Précision-Approximation-Garantie) la représentation ci-dessus. Effectuer
les opérations arithmétiques élémentaires (addition, soustraction, 
multiplication) sur la représentation PAG ne pose pas de problème.
Il en va pareillement de la division euclidienne dans $\Sk_\nu[1/p]$,
comme cela est expliqué dans \cite{carlub}, \S 4.3.

\subsubsection*{Les $(\phi,N)$-modules filtrés}

Revenant à la définition, on voit qu'un $(\phi,N)$-module filtré 
admissible est la donnée de :
\begin{itemize}
\item un $K_0$-espace vectoriel de dimension finie $D$ ;
\item un opérateur semi-linéaire $\phi : D \to D$ ;
\item un opérateur linéaire $N : D \to D$ ;
\item une filtration $(Fil^h D_K)_{h \in \Z}$ sur l'espace
$D_K = K \otimes_{K_0} D$.
\end{itemize}
Les trois premières données ne sont pas difficiles à représenter :
en effet, se donner $D$ revient à s'en donner une base et les
opérateurs $\phi$ et $N$ sont alors donnés par leur matrice dans
la base retenue. En ce qui concerne la filtration, une possibilité
pour la décrire est la suivante : on se donne les sauts de la
filtrations $h_1 \geq h_2 \geq \cdots \geq h_d$ (où $d$ est la
dimension de $D$) ainsi que des vecteurs $f_1, \ldots, f_d$ tels
que, pour tout entier $h$, le cran $\Fil^h D_K$ soit engendré sur
$K$ par les vecteurs $f_i$ tels que $h_i \geq h$.

En résumé, un $(\phi,N)$-module filtré $D$ peut se décrire par
la donnée d'un quadruplet $(\code{Phi}, \code{N}, \code{H}, 
\code{F})$ où l'on a fixé une base de $D$ et 
\begin{itemize}
\item $\code{Phi}$ est la matrice de $\phi$ dans cette base ;
\item $\code{N}$ est la matrice de $N$ dans cette base ;
\item $\code{H}$ est le $d$-uplet d'entiers $h_1 \geq h_2 \geq
\cdots \geq h_d$ donnant les sauts de la filtration ;
\item $\code{F}$ est une matrice à coefficients dans $K$ dont les 
vecteurs colonne sont les vecteurs $f_1, \ldots, f_d$ définis précédemment 
(chaque vecteur étant exprimé par ses coordonnées dans la base qui a 
été fixée).
\end{itemize}
Dans la suite, nous utiliserons cette écriture pour représenter les 
$(\phi,N)$-modules filtrés sur machine. Toutefois,
conformément à ce qui a été expliqué précédemment, tous les coefficients 
des matrices \code{Phi}, \code{N} et \code{F} ne sont pas connus 
exactement mais à une certaine précision $p^n$ (ou $\pi^n$ dans le 
dernier cas) qu'il nous faudra préciser.

\subsubsection*{Les représentations semi-simples modulo $p$}

La représentation sur machine des $\F_p$-représentations semi-simples 
de $G_K$ passe par un théorème de classification. Soit $I$ 
le sous-groupe d'inertie ; on rappelle qu'il s'agit du sous-groupe 
distingué de $G_K$ formé des éléments qui agissent trivialement sur le 
corps résiduel. Le quotient $G_K/I$ s'identifie canoniquement au groupe 
de Galois absolu du corps résiduel $k$ qui, on le rappelle, est supposé
fini. Si $q$ désigne le cardinal de $k$, le groupe $G_K/I$ est un
procyclique et engendré par le Frobenius $\Frob_q : x \mapsto x^q$.
On rappelle aussi que $I$ admet un unique pro-$p$-Sylow, classiquement 
noté $I_p$ et appelé sous-groupe d'inertie sauvage. Le quotient $I/I_p = 
I_t$ est appelé le groupe d'inertie modérée ; il est isomorphe à 
$\varprojlim_n \mu_{p^n-1}(\bar k)$ où $\mu_{p^n-1}(\bar k)$ désigne le 
sous-groupe de $\bar k^\star$ des racines $(p^n-1)$-ièmes de l'unité
et s'identifie donc à $\F_{p^n}^\star$ si $\F_{p^n}$ désigne l'unique
sous-corps de cardinal $p^n$ de $\bar k$. On déduit, en particulier, 
de cet isomorphisme que $I_t$ est procyclique, c'est-à-dire 
topologiquement engendré par un unique générateur. Enfin, le quotient 
$G_K/I_p$ s'identifie au produit semi-direct $I_t \rtimes \Gal(\bar 
k/k)$ où le générateur $\Frob_q \in \Gal(\bar k/k)$ agit sur $I_t$ par 
élévation à la puissance $q$.
Le corps découpé par les sous-groupe distingué $I$ et $I_p$ est
l'extension maximale non ramifiée (resp. modérément ramifiée) de
$K$ et sera notée $K^\nr$ (resp. $K^\mr$). On a bien sûr $K \subset
K^\nr \subset K^\mr \subset \bar K$ ; de plus, le corps $K^\mr$
s'obtient à partir de $K^\nr$ en ajoutant les racines $(p^n-1)$-ièmes
de $\pi$.

\medskip

Si maintenant $V$ est une $\F_p$-représentation de $G_K$, un lemme 
classique sur les actions de groupes assure que l'ensemble $V^{I_p}$, 
constitué des éléments $x \in V$ tel que $gx = x$ pour tout $g \in I_p$, 
n'est pas réduit à $0$. Par ailleurs, du fait $I_p$ est un distingué 
dans $G_K$, on déduit que $V^{I_p}$ est stable par l'action de $G_K$ 
tout entier. Ainsi, si l'on suppose en outre que $V$ est irréductible, 
on a nécessairement $V^{I_p} = V$ ; autrement dit, le sous-groupe $I_p$ 
agit trivialement sur $V$, ce qui signifie encore que l'action de $G_K$ 
se factorise par $G_K/I_p \simeq I_t \rtimes \Gal(\bar k/k)$. 
Ainsi, pour décrire complètement une $\F_p$-représentation de $G_K$,
il suffit de se donner :
\begin{itemize}
\item la matrice (à coefficients dans $\F_p$) donnant l'action d'un
générateur topologique de $I_t$, et
\item la matrice (à coefficients dans $\F_p$) donnant l'action de
$\Frob_q$.
\end{itemize}
À vrai dire, on peut encore simplifier cette représentation car, étant 
donné sa forme particulièrement simple, on dispose d'une classification
très concrète des représentations irréductibles de $I_t$. Précisément,
en notant $\varpi_n$ une racine $(p^n-1)$-ième de $\pi$, on dispose
du caractère fondamental de Serre de niveau $n$ défini comme suit :
\begin{eqnarray*}
\omega_n : \quad I_t & \to & \mu_{p^n-1}(\bar K) \simeq 
\mu_{p^n-1}(\bar k) = \F_{p^n}^\star \\
g & \mapsto & \frac{g(\varpi_n)}{\varpi_n}.
\end{eqnarray*}
Ce caractère définit bien sûr une $\F_{p^n}$-représentation de dimension 
$1$ de $I_t$ mais, en considérant $\F_{p^n}$ comme un $\F_p$-espace 
vectoriel de dimension $n$, on s'aperçoit qu'il définit aussi une 
$\F_p$-représentation de $I_t$ de dimension $n$ qui sera notée dans la
suite $\F_{p^n}(\omega)$. Évidemment, il est possible de faire la même 
construction à partir des puissances $\omega^s$ du caractère $\omega$. 
On obtient, ce faisant, des $\F_p$-représentations de dimension $n$ que 
l'on note $\F_{p^n}(\omega^s)$. On montre que ces représentations sont 
irréductibles dès que la fraction $\frac s{p^n-1}$ ne peut s'écrire sous 
la forme $\frac{s'}{p^{n'}-1}$ pour un entier $n' < n$. Mieux encore, 
toute représentation irréductible de $I_t$ est de cette forme et on sait 
que deux représentations $\F_{p^n}(\omega^s)$ et $\F_{p^m} (\omega^t)$ 
sont isomorphes si, et seulement si $n = m$ et il existe un entier $a$ 
tel que $s \equiv p^a t \pmod {p^n-1}$.

Pour étendre cette classification aux représentations de $I_t \rtimes 
\Gal(\bar k/k)$, on introduit la définition classique suivante.

\begin{deftn}
Soit $F$ un corps de caractéristique $p$.
Un \emph{$\phi$-module} sur $(F, \Frob_q)$ est un $F$-espace vectoriel
$D$ de dimension finie muni d'une application additive $\phi : D \to
D$ vérifiant $\phi(ax) = a^q \phi(x)$ pour tout $a \in F$ et tout $x
\in D$.

Le $\phi$-module $D$ est dit \emph{étale} si $\phi$ est bijectif.
Il est dit \emph{simple} s'il n'admet aucun sous-$F$-espace vectoriel
strict stable par $\phi$.
\end{deftn}

On se donne, à présent, des entiers $s$ et $n$ comme précédemment ainsi 
qu'un $\phi$-module $D$ sur $(\F_{p^n}, \Frob_q)$ qui soit étale et simple. On 
pose $V = D$ et on munit cet espace de l'action $\F_p$-linéaire de $I_t 
\rtimes \Gal(\bar k/k)$ en faisant agir $I_t$ par multiplication par 
$\omega_n^s$ et $\Frob_q$ \emph{via} l'endomorphisme $\phi$.
(La relation $\Frob_q \cdot g \cdot \Frob_q^{-1} = g^q$ valable pour $g 
\in I_t$ montre que cette définition a bien un sens.)
En oubliant maintenant la structure de $\F_{p^n}$-espace vectoriel sur 
$V$ (pour ne retenir que celle de $\F_p$-espace vectoriel), on fait de
$V$ une $\F_p$-représentation de $I_t \rtimes \Gal(\bar k/k)$. On note
cette représentation $V(\omega_n^s, D)$.

\begin{prop}
Les représentations $V(\omega_n^s, D)$ définies ci-dessus sont
irréductibles et, réciproquement, toute $\F_p$-représentation
irréductible de $I_t \rtimes \Gal(\bar k/k)$ est de la forme
$V(\omega_n^s, D)$ pour certains paramètres $n$, $s$ et $D$.

Par ailleurs, $V(\omega_n^s, D) \simeq V(\omega_m^t, E)$
si, et seulement si $n = m$ et il existe un entier $a$ tel que
l'on ait simultanément $s' \equiv p^a t \pmod {p^n-1}$ et
l'existence d'un isomorphisme de $\phi$-modules
$E \simeq \F_{p^n} \otimes_{x \mapsto x^{p^a}, \F_{p^n}} D$.
\end{prop}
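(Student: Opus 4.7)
Le plan consiste \`a appliquer la th\'eorie de Clifford \`a la suite $1 \to I_t \to I_t \rtimes \Gal(\bar k/k) \to \Gal(\bar k/k) \to 1$, en combinaison avec la classification (rappel\'ee ci-dessus) des repr\'esentations irr\'eductibles de $I_t$ par les caract\`eres $\omega_n^s$.

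\emph{Irr\'eductibilit\'e de $V(\omega_n^s, D)$.} Soit $W \subset V(\omega_n^s, D)$ un sous-$\F_p$-espace stable par $I_t \rtimes \Gal(\bar k/k)$. La stabilit\'e sous $I_t$ entra\^{\i}ne que $W$ est stable par multiplication par tout \'el\'ement de la sous-$\F_p$-alg\`ebre de $\F_{p^n}$ engendr\'ee par l'image de $\omega_n^s$. Sous l'hypoth\`ese de primitivit\'e de $\omega_n^s$ au niveau $n$ --- hypoth\`ese que l'on peut toujours supposer quitte \`a r\'einterpr\'eter $D$ sur un sous-corps plus petit de $\F_{p^n}$ --- cette sous-alg\`ebre est $\F_{p^n}$ tout entier. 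Par cons\'equent, $W$ est un sous-$\F_{p^n}$-espace vectoriel de $D$, stable de surcro\^{\i}t par $\phi$ (action de $\Frob_q$), et la simplicit\'e de $D$ comme $\phi$-module permet de conclure.

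\emph{Surjectivit\'e de la classification.} Partant d'une $\F_p$-repr\'esentation irr\'eductible $V$, la semi-simplicit\'e de l'action de $I_t$ (groupe d'ordre premier \`a $p$) fournit, apr\`es extension des scalaires \`a $\bar \F_p$, une d\'ecomposition isotypique $V \otimes_{\F_p} \bar \F_p = \bigoplus_\chi V_\chi$ index\'ee par des caract\`eres $\chi$ qui sont tous de la forme $\omega_n^s$. L'irr\'eductibilit\'e de $V$ impose que les caract\`eres apparaissant forment une orbite unique sous l'action conjointe du Frobenius $\Frob_q$ (issu du produit semi-direct) et du Frobenius arithm\'etique $\Frob_p$ (issu de la descente de $\bar \F_p$ \`a $\F_p$). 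En fixant un $\chi = \omega_n^s$ primitif de niveau $n$, une descente galoisienne appropri\'ee munit la somme des composantes isotypiques $\Frob_p$-conjugu\'ees d'une structure de $\F_{p^n}$-espace vectoriel $D$ ainsi que d'un op\'erateur $\phi$ bijectif, semi-lin\'eaire par rapport \`a $\Frob_q$, h\'erit\'e de l'action de $\Frob_q$ sur $V$. L'irr\'eductibilit\'e de $V$ \'equivaut alors \`a la simplicit\'e de $D$ comme $\phi$-module, et l'on obtient un isomorphisme canonique $V \simeq V(\omega_n^s, D)$.

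\emph{Crit\`ere d'isomorphisme.} Un isomorphisme $V(\omega_n^s, D) \simeq V(\omega_m^t, E)$ pr\'eserve les d\'ecompositions isotypiques sous $I_t$ apr\`es extension \`a $\bar \F_p$. L'identification des ensembles de caract\`eres fournit aussit\^ot $n = m$ et l'existence d'un entier $a$ v\'erifiant $s \equiv p^a t \pmod{p^n - 1}$. La restriction de l'isomorphisme \`a une composante isotypique donne alors, modulo le twist par $\Frob_p^a$ c\^ot\'e structure $\F_{p^n}$-lin\'eaire, l'isomorphisme de $\phi$-modules $E \simeq \F_{p^n} \otimes_{x \mapsto x^{p^a}, \F_{p^n}} D$. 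La r\'eciproque est une v\'erification directe \`a partir des d\'efinitions, les deux repr\'esentations associ\'ees partageant manifestement la m\^eme d\'ecomposition sous $I_t \rtimes \Gal(\bar k/k)$.

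La difficult\'e principale r\'eside dans la descente galoisienne effectu\'ee \`a la deuxi\`eme \'etape : il s'agit de contr\^oler conjointement l'action de $\Frob_q$ venue du produit semi-direct et celle de $\Frob_p$ venue de la descente des scalaires, afin d'extraire canoniquement le couple $(\omega_n^s, D)$ \`a l'ambigu\"it\'e $s \mapsto p^a s$ pr\`es, ambigu\"it\'e qui gouverne pr\'ecis\'ement le crit\`ere d'isomorphisme.
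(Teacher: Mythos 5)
The paper offers no proof of this proposition (it is labelled \emph{Exercice}), so there is nothing of the authors' to compare against; your Clifford-theoretic sketch is a correct and quite standard way to establish it. Two points deserve a remark. First, in the irreducibility step your claim that $W$ is stable under all of $\F_{p^n}$ rests on the assertion that the $\F_p$-subalgebra of $\F_{p^n}$ generated by the image $H = \omega_n^s(I_t)$ is the whole field; this does hold under the primitivity hypothesis on $(s,n)$, but it is worth spelling out: $H$ is cyclic of order $m = (p^n-1)/\gcd(s,p^n-1)$, so $\F_p[H] = \F_{p^{n'}}$ with $n' = \mathrm{ord}_m(p)$, and if one had $n' < n$ then $m \mid p^{n'}-1$ would give a representation $\frac{s}{p^n-1} = \frac{s'}{p^{n'}-1}$ contradicting primitivity. (The parenthetical remark about ``réinterpréter $D$ sur un sous-corps plus petit'' is superfluous here, since the paper already requires $(s,n)$ to be primitive in the definition of $V(\omega_n^s,D)$, and it is not obvious how one would carry $D$ and $\phi$ down to $\F_{p^{n'}}$ otherwise.) Second, the Galois-descent step in the surjectivity direction --- where one has to organise simultaneously the conjugation action of $\Frob_q$ (from the semidirect product) and the arithmetic action of $\Frob_p$ (from descent of scalars from $\bar\F_p$) into the pair $(\omega_n^s,D)$ --- is the real content of the argument and would need to be written in full; as it stands you have correctly identified the mechanism, and you correctly observe that the residual ambiguity $s \mapsto p^a s$ in choosing a base character in the orbit is precisely what produces both the condition $s \equiv p^a t \pmod{p^n-1}$ and the Frobenius-twist of $D$ in the isomorphism criterion. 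Note as well that the statement in the paper contains a typo ($s'$ should read $s$), which your proof silently corrects.
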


\begin{proof}
Exercice.
\end{proof}

On déduit de la proposition que, pour représenter une 
$\F_p$-représentation simple $V$ de $I_t \rtimes \Gal(\bar k/k)$, il 
suffit de se donner les paramètres $s$, $n$ et $D$ tels que $V = 
V(\omega_n^s, D)$. En vertu de la première réduction que l'on a 
expliquée, ceci s'applique également aux $\F_p$-représentation simples 
de $G_K$. Ainsi, pour décrire une représentation semi-simple de
ce groupe, il suffit de se donner une liste de paramètres $(s,n,D)$,
chacun de ces triplets correspondant à un facteur simple de la
représentation.

Il reste à expliquer comment on peut représenter concrètement un 
$\phi$-module $D$ sur $(F, \Frob_q)$ où $F$ est un corps fini de
caractéristique $p$. Une possibilité est de se donner la matrice de
l'opérateur $\phi$ dans une certaine base de $D$. Une autre solution
consiste à introduite l'anneau $F[X,\Frob_q]$ des polynômes 
tordus\footnote{Les éléments de $F[X,\Frob_q]$ sont les polynômes à 
coefficients dans $F$, l'addition sur $F[X,\Frob_q]$ est également
l'addition usuelle sur les polynômes, tandis que la multiplication 
découle la règle $X \cdot a = a^q \cdot X$. En particulier, l'anneau
$F[X,\Frob_q]$ n'est pas commutatif.} et à se rappeler que la donnée
d'un $\phi$-module étale simple sur $(F, \Frob_q)$ équivaut à la donnée 
d'une classe de similarité\footnote{Deux polynômes tordus $P$ et $Q$ 
dans $K[X,\Frob_q]$ sont dit similaires s'il existe $U$ et $V$ dans 
$K[X,\Frob_q]$ tels que $P$ et $V$ soient premiers entre eux à droite, 
$Q$ et $U$ soient premiers entre eux à gauche et $UP = QV$.} de 
polynômes tordus irréductibles dans $F[X,\Frob_q]$ (voir par exemple 
\cite{leborgne}, \S I). Ainsi, il est également possible de représenter 
le $\phi$-module par un représentant de la classe de similarité dans 
$F[X,\Frob_q]$ ; on obtient, comme ceci, une représentation compacte et 
également plus agréable à manipuler.

\subsubsection*{Les modules de Breuil-Kisin}

Soit $\nu$ un nombre rationnel positif ou nul. Par définition, un module 
de Breuil-Kisin sur l'anneau $\Sk_\nu$ est un $\Sk_\nu$-module libre 
$\Mk$ muni d'un endomorphisme semi-linéaire $\phi$ dont le conoyau du
linéarisé est annulé par une puissance de $E(u)$.

On peut ainsi raisonner comme dans le cas des $(\phi,N)$-modules filtrés 
et représenter un module de Breuil-Kisin sur $\Sk_\nu$ par la matrice 
$\code{PhiBK}$ ($\code{BK}$ pour Breuil-Kisin) de l'opérateur $\phi$ 
dans une certaine base de $\Mk$ fixée à l'avance. Cette matrice est
à coefficients dans $\Sk_\nu$ et la condition sur le conoyau de $\phi$ 
se traduit par l'existence d'une autre matrice $\code{PhiBK}'$ à
coefficients dans $\Sk_\nu$ telle que
$$\code{PhiSK}' \cdot \code{PhiSK} = E(u)^r$$
pour un certain entier $r$.

\subsection{Étape 1 : Des $(\phi,N)$-modules filtrés aux modules 
de Breuil-Kisin}
\label{subsec:etape1}

On se donne, dans ce numéro, un $(\phi,N)$-module filtré effectif 
admissible $D$ que l'on représente par un quadruplet $(\code{Phi}, 
\code{N}, \code{H}, \code{F})$ comme expliqué au \S \ref{subsec:repr}. 
Ceci sous-entend en particulier que l'on a fixé une base de $D$, que 
l'on notera parfois $(e_1, \ldots, e_d)$ avec $d = \dim_{K_0} D$. Dans la suite, on 
utilisera constamment de façon implicite cette base pour identifier $D$ 
à $K_0^d$. On note $L \subset D$ le réseau \og 
standard \fg, c'est-à-dire le sous-$\O_{K_0}$-module de $D$ engendré par 
les $e_i$.
Pour des questions de précision, on suppose, en outre, que la matrice 
$\code{F}$ est à coefficients dans l'anneau des entiers $\O_K$ et, de 
plus, qu'elle appartient $\GL_d(\O_K)$. Il est facile de vérifier qu'une 
matrice $\code{F}$ satisfaisant à cette hypothèse supplémentaire existe 
toujours ; de plus, s'il nous est donnée une matrice $\code{F}$ ne 
vérifiant pas cette hypothèse, il n'est pas difficile de la transformer 
--- à l'aide d'un algorithme de type \og pivot de Gauss \fg, voir lemme
\ref{lem:decomp}, page \pageref{lem:decomp} --- en une matrice 
$\code{F}$ convenable. Notons cependant que cette opération peut 
entraîner des pertes de précision $p$-adique.

\medskip

On considère un nombre entier $r$ tel que $\Fil^{r+1} D_K = 0$ ; 
par exemple, $r$ peut être pris égal au plus grand poids de Hodge-Tate 
de $V$. Par ailleurs, pour ne pas alourdir les notations lors de
l'étude de la perte de précision, nous faisons également les deux 
hypothèses simplificatrices suivantes : \emph{primo}, le polynôme
$E(u)$ est connu exactement et \emph{secundo}, les matrices 
$\code{Phi}$, $\code{N}$ et $\code{F}$ sont à coefficients entiers.

Notre objectif est de calculer le module de Breuil-Kisin $\Dk$ --- ou 
plutôt $\Dk_\nu = \Sk_\nu \otimes_\Sk \Dk$ pour $\nu > 0$ --- et nous 
suivons pour cela la méthode de Génestier et Lafforgue rappelée au \S 
\ref{subsec:GL}. Nous reprenons également les notations de ce numéro : 
on considère l'anneau $\hat \Sk = \varprojlim_n \Sk/E(u)^n$, on pose 
$\hat \Dk = \hat \Sk [\frac 1{E(u)}] \otimes_{\phi,K_0} D$ et on note 
$V_D$ l'unique structure de Hodge-Pink satisfaisant à la transversalité 
de Griffiths qui correspond à la filtration $\Fil^h D_K$ sur le 
$(\phi,N)$-module filtré $D$ (voir lemme \ref{lem:uniqueHP}).
D'après les hypothèses que l'on a faites, on a $\Fil^0 D_K = D_K$ et 
$\Fil^{r+1} D_K = 0$, ce qui se traduit par les inclusions $\hat \Sk 
\otimes_{K_0} D \subset V_D \subset E(u)^{-r} \hat \Sk \otimes_{K_0} D$. 
On pose $W_D = E(u)^r V_D$ ; on a alors $E(u)^r\hat \Sk \otimes_{K_0} D 
\subset W_D \subset \hat \Sk \otimes_{K_0} D$.

\subsubsection{Le calcul de $V_D$}
\label{subsec:VD}

La première étape consiste à calculer une famille explicite de 
générateurs de $V_D$. Pour ce faire, le lemme suivant nous sera fort
utile.

\begin{lemme}
\label{lem:hatNzero}
Le morphisme de réduction modulo $E(u)$ induit un isomorphisme :
$$(\hat \Sk \otimes_{\phi, K_0} D)^{\hat N = 0} \to D^\phi_K.$$
\end{lemme}

\begin{proof}
Il s'agit de montrer que tout $w \in D^\phi_K$ se relève de manière
unique en un élément $\hat w \in \hat \Sk \otimes_{\phi, K_0} D$ 
vérifiant $\hat N(\hat w) = 0$. 
On raisonne par approximations successives : on va démontrer par
récurrence que, pour tout entier $i \geq 0$, il existe $\hat w^{(i)} 
\in \hat \Sk \otimes_{\phi, K_0} D$ tel que $\hat w_i \equiv w \pmod
{E(u)}$ et $\hat N(\hat w^{(i)}) \equiv 0 \pmod{E(u)^i}$ et, de plus,
deux $\hat w^{(i)}$ solutions de deux congruences précédentes sont 
congrus entre eux modulo $E(u)^i$.
L'assertion est clairement vraie pour $i = 1$. On la suppose à présent
pour un certain $i$ et on cherche à la démontrer pour $i+1$. Cherchant 
$\hat w^{(i+1)}$ sous la forme $\hat w^{(i)} + E(u)^i x$, on est amené
à résoudre la congruence
$$\hat N(\hat w^{(i)}) + i u E(u)^{i-1} \: \frac {d E(u)}{du} \cdot x 
\equiv 0 \pmod{E(u)^i}.$$
Or, on sait, par hypothèse de récurrence, que $\hat N(\hat w^{(i)})$ est
multiple de $E(u)^{i-1}$. On peut donc écrire $\hat N(\hat w^{(i)}) =
E(u)^{i-1} y$ et la congruence que l'on cherche à résoudre se réduit
alors à :
$$i u \: \frac {d E(u)}{du} \cdot x + y \equiv 0 \pmod{E(u)}.$$
Étant donné que $\hat \Sk / E(u) \hat \Sk \simeq K$ est un corps dans
lequel le produit $i u \frac {d E(u)}{du}$ est non nul, la congruence 
ci-dessus admet une solution modulo $E(u)$. Ceci termine la 
récurrence et démontre le lemme.
\end{proof}

Il est à noter que la démonstration que l'on vient de donner est 
entièrement constructive (bien sûr, si l'on se limite à calculer les 
$\hat w^{(i)}$ pour un entier $i$ fini) et peut-être transformée 
aisément en algorithme (voir algorithme \ref{algo:relHP}).
\begin{algorithm}
  \SetKwInOut{Input}{Entrée}
  \SetKwInOut{Output}{Sortie}

  \Input{un vecteur $w \in D_K^\phi$}
  \Output{l'unique vecteur $\hat w \in (\hat \Sk \otimes_{\phi,K_0} D)
  ^{\hat N = 0}$ relevant $w$ calculé modulo $E(u)^i$}

  \BlankLine
  $A$ $\leftarrow$ $u \: \frac{dE(u)}{du}$, \;
  $B$ $\leftarrow$ inverse de $A$ modulo $E(u)$\;

  $\hat w$ $\leftarrow$ relevé de $w$ dans $K_0[u]$\label{lgn:relw}\;
  $y$ $\leftarrow$ $p \code{N} \cdot \hat w + u \: \frac{d\hat w}{du}$\;
  \For{j allant de $1$ à $i-1$}{
    $x$ $\leftarrow$ $- j^{-1} \: B \cdot y \mod E(u)$\label{lgn:divj}\;
    $\hat w$ $\leftarrow$ $\hat w + E(u)^i \: x$\;
    $y$ $\leftarrow$ $p \code{N} \cdot x + u \: \frac{dx}{du} + 
      \frac{Y + jAx}{E(u)}$\label{lgn:mulB}\;
  }
  \Return $\hat w$\;
\caption{\sc 
ReleveHP$(w, \code{N}, i)$\label{algo:relHP}}
\end{algorithm}
Il est, en outre, possible d'estimer les pertes de précision engendrées
par cet algorithme.

\begin{lemme}
\label{lem:precrelHP}
On suppose que le polynôme $E(u)$ est connu à précision arbitrairement
grande.
Si le vecteur $w$ est à coefficients dans $\O_K$ et que $w$ ainsi que 
$\code{N}$ sont connus à précision $O(p^N)$, alors l'algorithme 
\ref{algo:relHP} appelé sur l'entrée $(w, \code{N}, i)$ renvoie un 
vecteur $\hat w$ connu à précision $O(p^{N-\rho_1})$ avec
$$\rho_1 = (i-2) \cdot \left\lceil \frac 1 e + \val(\df_{K/K_0}) 
\right\rceil + \frac {i-1}{p-1}$$
où $\df_{K/K_0}$ désigne la différente de $K/K_0$ et $\lceil x \rceil$
désigne la partie entière supérieure du réel $x$.
\end{lemme}

\begin{proof}
D'après l'hypothèse sur $E(u)$, il est possible de calculer les 
polynômes $A$ et $B$ avec une précision arbitrairement grande. Comme, en 
outre, $A$ est à coefficients dans $\O_{K_0}$, les multiplications par
$A$ n'engendrent pas de perte de précision. De la même façon, étant
donné que $E(u)$ est unitaire, la division par $E(u)$ n'entraîne pas
non plus de perte de précision. Le polynôme $B$, quant à lui, n'a pas de 
raison d'être à coefficients dans $\O_{K_0}$ ; cependant, on connait la 
valuation de son image dans $K$ : c'est l'opposé de $\frac 1 e + 
\val(\df_{K/K_0})$. Ainsi, si l'on note $\delta$ la partie entière
supérieure de $\frac 1 e + \val(\df_{K/K_0})$, on peut choisir $B$ de
façon à ce que $v_0(B) \geq -\delta$. Ainsi les multiplications par 
$B$ font chuter la précision de $p^\delta$.

Lors de l'exécution de l'algorithme \ref{algo:relHP}, les seules pertes 
de précision interviennent :
\begin{itemize}
\item à la ligne \ref{lgn:divj} lors de la multiplication par $j^{-1}$,
\item à la ligne \ref{lgn:mulB} lors de la multiplication par $X$ et
de la multiplication par $B$.
\end{itemize}
Ainsi, si l'on note $v_{x,j}$, $v_{y,j}$ et $v_{\hat w,j}$ (resp. 
$p_{x,j}$, $p_{y,j}$ et $p_{\hat w,j}$) les valuations respectives 
(resp. les exposants des précisions respectives) des variables $x$, $y$ 
et $\hat w$ à la sortie de la $j$-ième itération de la boucle, on a les 
formules récurrentes suivantes :
\begin{eqnarray}
v_{x,j} & \geq & v_{y,j-1} - \val(j)\label{eq:vxj} \\
v_{y,j} & \geq & \min(v_{y,j-1}, v_{x,j}, v_{x,j} + \delta + \val(j))\label{eq:vyj} \\
v_{\hat w,j} & \geq & \min(v_{\hat w,j-1}, v_{x,j}) \\
p_{x,j} & \geq & p_{y,j-1} - \val(j) \label{eq:pxj}\\
p_{y,j} & \geq & \min(p_{y,j-1}, N + v_{x,j}, p_{x,j}, p_{x,j} + \delta + \val(B))\label{eq:pyj} \\
p_{\hat w,j} & \geq & \min(p_{\hat w,j-1}, p_{x,j}).
\end{eqnarray}
Par ailleurs, à la ligne \ref{lgn:relw}, il est certainement possible
de choisir un relevé $\hat w$ connu à précision $O(p^N)$ (simplement en
écrivant $w$ comme un polynôme en $\pi$). Ainsi, si l'on convient que
$v_{\hat w, 0}$ et $p_{\hat w,0}$ désignent la valuation et la précision
de $\hat w$ avant l'entrée dans la boucle, on obtient les conditions
initiales $v_{\hat w, 0} \geq 0$, $p_{\hat w,0} \geq N$. De même, on
a $v_{y,1} \geq 0$ et $p_{y,1} \geq N$.
En remplaçant dans l'inégalité \eqref{eq:vyj}, la quantité $v_{x,j}$ par 
le minorant donné par \ref{eq:vxj}, on obtient la formule de 
récurrence :
$$v_{y,j} \geq \min(v_{y,j-1} - \val(j), 
v_{y,j-1} - \delta) \geq v_{y,j-1} - \val(j) - \delta$$
qui entraîne immédiatement la formule close
$v_{y,j} \geq - j \cdot \delta - \val(j!) \geq - j \cdot \delta - 
\frac j {p-1}$. On en déduit que $v_{x,j} \geq - (j-1) \cdot \delta - 
\frac j {p-1}$ et, par suite, que le même minorant vaut pour $v_{\hat
w,j}$. De la même façon, en combinant \eqref{eq:pxj} et \eqref{eq:pyj},
on obtient $p_{y,j} \geq N - j \delta - \frac j{p-1}$, $p_{x,j} \geq N - 
j \delta - \frac j{p-1}$ et $p_{\hat w,j} \geq N - (j-1) \delta - \frac 
j{p-1}$. Comme la boucle est exécutée $j-1$ fois, on a bien le résultat
souhaité.
\end{proof}

On revient à présent au problème de calculer la structure de Hodge-Pink 
$W_D$ à partir de la donnée du quadruplet $(\code{Phi}, \code{N}, 
\code{H}, \code{F})$. On pose $W = \code{Phi}^{-1} \cdot \code{F}$. Les 
vecteurs colonne de $W$ vus comme éléments de $K \otimes_{\phi, K_0} D$ 
(et exprimées dans la base $(1 \otimes e_1, \ldots, 1 \otimes e_d)$) 
sont alors égaux aux $\phi_K^{-1}(f_i)$ où les $f_i$ sont les vecteurs 
colonnes de $\code{F}$. On rappelle le lemme classique suivant :

\begin{lemme}
\label{lem:decomp}
Toute matrice $M \in M_d(K)$ se décompose sous la forme $M = M' U$
avec $M' \in \GL_d(\O_K)$ et $U$ triangulaire supérieure.
\end{lemme}

\begin{proof}
Il s'agit de démontrer qu'en effectuant des opérations élémentaires
inversibles dans $\O_K$ sur les lignes de $M$, on peut obtenir une
matrice triangulaire supérieure. Voici comment on peut procéder : 
(1)~on sélectionne un élément de valuation minimale sur la première
colonne, (2)~on déplace cet élément sur la première ligne en
permutant les lignes correspondantes, (3)~on utilise cet élément
comme pivot pour annuler, à l'aide d'opérations élémentaires sur
les lignes de $M$, tous les autres coefficients de la première
colonne de $M$, (4)~on recommence tout le processus précédent à
partir de la sous-matrice de $M$ obtenue en retirant la première
ligne et la première colonne.
\end{proof}

Soit $W = W' U$ une décomposition satisfaisant aux conditions du lemme 
précédent. Sachant que $U$ est triangulaire supérieure et que les $h_i$ 
sont rangés par ordre croissant, on déduit qu'en notant $(w'_1, \ldots, 
w'_d)$ les vecteurs colonne de $W'$ considérés comme éléments de $K 
\otimes_{\phi, K_0} D$, pour tout $h \geq 0$, l'espace $\phi_K^{-1}( 
\Fil^h D_K)$ est engendré par les vecteurs $w'_i$ pour les indices $i$ 
tels que $h_i \geq h$. Pour tout $i$, on note $\hat w'_i$ l'unique élément
de $(\hat \Sk \otimes_{\phi, K_0} D)^{\hat N = 0}$ relevant $w'_i$ (voir
lemme \ref{lem:hatNzero} ci-dessus).

\begin{prop}
\label{prop:genwi}
Avec les notations précédentes, $V_D$ est engendré comme $\hat 
\Sk$-module par les vecteurs $E(u)^{-h_i} \hat w'_i$ ($1 \leq i \leq 
d$).
\end{prop}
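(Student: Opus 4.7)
Le plan consiste � montrer que le $\hat\Sk$-module $V' = \sum_{i=1}^d \hat\Sk \cdot E(u)^{-h_i} \hat w'_i$ est une structure de Hodge-Pink sur $D$ satisfaisant � la transversalit� de Griffiths et dont la filtration associ�e co�ncide avec la filtration $(\Fil^h D_K)$ donn�e. L'unicit� fournie par le lemme \ref{lem:uniqueHP} permettra alors de conclure que $V' = V_D$.

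D'abord, je v�rifierais que la famille $(\hat w'_1, \ldots, \hat w'_d)$ forme une $\hat\Sk$-base de $\hat\Sk \otimes_{\phi, K_0} D$ : l'anneau $\hat\Sk$ est local complet, d'id�al maximal engendr� par $E(u)$, et les r�ductions modulo $E(u)$ des $\hat w'_i$ sont par construction les $w'_i$ qui forment une $K$-base de $D^\phi_K$ ; le lemme de Nakayama, sous sa forme pour les anneaux locaux complets, donne alors imm�diatement le r�sultat. Il en d�coule que $V' = \bigoplus_i \hat\Sk \cdot E(u)^{-h_i} \hat w'_i$ est libre de rang $d$, et constitue donc bien une structure de Hodge-Pink sur $D$.

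Pour la transversalit� de Griffiths, on utilise la relation $\hat N(\hat w'_i) = 0$ (assur�e par le lemme \ref{lem:hatNzero}) combin�e � la r�gle de Leibniz : pour tout $s \in \hat\Sk$, on a
$$\hat N(s \: E(u)^{-h_i} \hat w'_i) = \hat N(s) \: E(u)^{-h_i} \hat w'_i \: - \: h_i \: s \: E(u)^{-h_i-1} \: \hat N(E(u)) \: \hat w'_i.$$
Comme $\hat N(E(u)) = u \: \frac{dE(u)}{du} \in \hat\Sk$, le premier terme appartient � $V'$ et le second � $\frac 1{E(u)} V'$, ce qui donne l'inclusion souhait�e $\hat N(V') \subset \frac 1{E(u)} V'$.

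Pour la v�rification de la filtration, la d�composition $\hat\Sk \otimes_{\phi, K_0} D = \bigoplus_i \hat\Sk \: \hat w'_i$ �tablie � la premi�re �tape permet d'�crire
$$E(u)^h V' \cap \big(\hat\Sk \otimes_{\phi, K_0} D\big) \:=\: \bigoplus_{i=1}^d E(u)^{\max(h-h_i,\,0)} \: \hat\Sk \cdot \hat w'_i.$$
Apr�s r�duction modulo $E(u)$, seuls les indices $i$ tels que $h_i \geq h$ contribuent, et l'image dans $D^\phi_K$ s'identifie au sous-$K$-espace engendr� par les $w'_i$ correspondants ; par le choix des $w'_i$, ce sous-espace co�ncide pr�cis�ment avec $\phi_K^{-1}(\Fil^h D_K)$. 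Le point le plus d�licat est cette derni�re v�rification concernant la filtration, mais la d�composition explicite en base la r�duit � un simple examen ; une fois ces trois propri�t�s �tablies, le lemme \ref{lem:uniqueHP} termine la d�monstration.
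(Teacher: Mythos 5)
Your proof follows the same route as the paper: you form the candidate $\hat\Sk$-module generated by the $E(u)^{-h_i}\hat w'_i$, verify Griffiths transversality from $\hat N(\hat w'_i)=0$ and the matching of filtrations from the congruence $\hat w'_i \equiv w'_i \pmod{E(u)}$ together with the triangular form of $U$, and conclude by the uniqueness statement of lemme~\ref{lem:uniqueHP}. You simply make explicit (via Nakayama and the displayed Leibniz computation) the points that the paper's two-line proof leaves implicit; there is no gap.
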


\begin{proof}
Si l'on note $V'_D$ la structure de Hodge-Pink engendrée par les 
vecteurs $E(u)^{-h_i} \hat w'_i$, il suffit de vérifier que :
\begin{enumerate}[(i)]
\item $V'_D$ vérifie la transversalité de Griffiths, et
\item la filtration de $D_K$ associée à $V'_D$ s'identifie à la
filtration $\Fil^h D_K$ du $(\phi,N)$-module filtré $D$.
\end{enumerate}
Le premier point découle directement de la condition $\hat N(\hat 
w'_i) = 0$ tandis que le second est une conséquence immédiate des
congruences $\hat w'_i \equiv w'_i =  \phi_K^{-1}(v_i) \pmod{E(u)}$.
\end{proof}

Étant donné que l'on sait par avance que $\hat \Sk \otimes_{\phi, K_0} D 
\subset V_D$, la proposition \ref{prop:genwi} ci-dessus vaut encore si, 
pour chaque $i$, on remplace $\hat w_i$ par un élément qui lui est 
congru modulo $E(u)^{h_i}$. Ainsi, en reprenant les notations de la 
démonstration du lemme \ref{lem:hatNzero}, les $E(u)^{-h_i} \hat 
w_i^{(h_i)}$ forment aussi une famille de générateurs de $V_D$. Or, ces 
$\hat w_i^{(h_i)}$ ont l'énorme avantage de pouvoir être calculé 
efficacement par l'algorithme \ref{algo:relHP}. En plus de cela, les 
vecteurs $E(u)^{r-h_i} \hat w_i^{(h_i)}$ --- qui engendrent donc $W_D = 
E(u)^r V_D$ --- possèdent le second avantage d'être éléments de 
$\E^+ \otimes_{\phi,K_0} D$. Ainsi, non seulement ils forment une $\hat 
\Sk$ base de $W_D$ mais également une $\E^+$-base de l'intersection $W_D 
\cap (\E^+ \otimes_{\phi,K_0} D)$.

L'algorithme \ref{algo:HP} récapitule la construction d'une famille
génératrice de $V_D$ que nous venons de présenter.
\begin{algorithm}
  \SetKwInOut{Input}{Entrée}
  \SetKwInOut{Output}{Sortie}

  \Input{Un $(\phi,N)$-module filtré $D$}
  \Output{Une matrice $\hat W'$ telle que les vecteurs colonne de $\hat W' 
  \cdot \Diag(E(u)^{-h_1}, \ldots, E(u)^{-h_d})$ engendrent $V_D$}

  \BlankLine
  $W$ $\leftarrow$ $\code{Phi}^{-1} \cdot \code{F}$\;
  \textbf{écrire} $W = W' U$ (\emph{cf} lemme \ref{lem:decomp})\;
  \lFor{i allant de $1$ à $d$}{
    $\hat w'_i$ $\leftarrow$ $\textsc{ReleveHP}(W'[\cdot,i], \code{N}, \code{H}[i])$\;
  }
  \Return{la matrice dont les vecteurs colonne sont $\hat w'_1, 
    \ldots, \hat w'_d$\;}
\caption{\sc 
HodgePink$(\code{Phi}, \code{N}, \code{H}, \code{F})$\label{algo:HP}}
\end{algorithm}
Au niveau de la précision, l'admissibilité de $D$, combinée au fait que 
tous les $h_i$ sont dans $\{0, \ldots, r\}$, implique que tous les 
diviseurs élémentaires de $\code{Phi}$ divisent $p^r$. À partir de là, 
un examen de la méthode de calcul de la matrice $W'$ (voir démonstration 
du lemme \ref{lem:decomp}) montre que les pertes de précision pour le
calcul de $W'$ sont majorées par $p^{dr}$ ; autrement dit, si $W$ est
connu à précision $O(p^N)$, la matrice $W'$ est au pire connue avec
une précision $O(p^{N-dr})$. En combinant ceci avec le résultat du
lemme \ref{lem:decomp}, on obtient une formule explicite pour les pertes 
de précision totales de l'algorithme \ref{algo:HP}.

\subsubsection{Le calcul de $\beta_n$}
\label{subsec:betan}

On rappelle que l'on a défini au \S \ref{subsec:GL} une suite $\beta_n$ 
de sous-$\E^+$-modules de $\E^+ \otimes_{K_0} D$ (voir formule 
\ref{eq:betan}) et que ceux-ci sont reliés au module de Breuil-Kisin 
$\Dk_\nu$ que l'on souhaite calculer grâce au théorème \ref{theo:GL} : 
on a un isomorphisme canonique
$$\Dk_{1/(ep^n)} \simeq \E^+_{1/(ep^n)} \otimes_{\E^+} \beta_n$$
et l'action de $\phi$ sur $\Dk_{1/(ep^n)}$ correspond \emph{via} 
cette identification à l'action de $(\frac{E(u)}{E(0)})^r \phi
\otimes \phi$ sur le membre de droite. Dans ce paragraphe, nous
expliquons comment calculer des générateurs de $\beta_n$ pour un
entier $n$ que l'on se donne.

\medskip

On remarque pour commencer que les $\beta_n$ ne sont pas modifiés si, 
dans la formule \eqref{eq:betan}, on replace $W_D$ par son intersection 
avec $\E^+ \otimes_{\phi, K_0} D$. Or, d'après les résultats du \S 
\ref{subsec:VD}, une base de cette intersection est donnée par les 
vecteurs colonne de la matrice produit $\hat W' \Delta_1$ où $\hat W'$ 
est la matrice calculée par l'algorithme \ref{algo:HP} et $\Delta_1$ est 
la matrice diagonale suivante :
$$\Delta_1 = \left( \begin{matrix}
\lambda_1^{r-h_1} & & \\
& \ddots & \\
& & \lambda_1^{r-h_d}
\end{matrix} \right)
\quad \text{avec} \quad \lambda_1 = \frac{E(u)}{E(0)}.$$
À partir de là, on s'aperçoit qu'il est possible de calculer les 
$\beta_n$ en calculant des intersections de $\E^+$-modules complètement 
explicites. Comme $\E^+$ est un anneau euclidien, cela est possible en
utilisant les algorithmes standard de manipulation de modules sur les
anneaux euclidiens (qui reposent essentiellement sur l'existence d'une
forme normale d'Hermite). Toutefois, ces méthodes peuvent s'avérer 
lentes et très instables. Dans la suite, nous allons présenter une autre 
approche qui repose sur l'écriture d'une formule quasiment explicite pour 
$\beta_n$.

Pour tout entier $m \geq 1$, on pose $\lambda_m = \lambda_1 
\phi(\lambda_1) \cdots \phi^{m-1}(\lambda_1)$ et, de même que l'on a 
défini $\Delta_1$, on note $\Delta_m$ la matrice diagonale dont les 
termes diagonaux sont $\lambda_m^{r-h_1}, \ldots, \lambda_m^{r-h_d}$. On 
convient également que $\lambda_0 = 1$ et que $\Delta_0$ est la matrice 
identité de taille $d$.

\begin{lemme}
\label{lem:betan}
Soit $\hat W'$ la matrice calculée par l'algorithme \ref{algo:HP}. Soient $n$ 
un entier strictement positif et $X$ une matrice à coefficients dans 
$\E^+$ telle que pour tout entier $m \in \{0, \ldots, n-1\}$ :
$$X \equiv \code{Phi} \cdot \phi(\code{Phi}) \cdots \phi^m(\code{Phi})
\cdot \phi^m(\hat W' \Delta_1) \cdot P_m \pmod{\phi^m(E(u)^r)}$$
pour une certaine matrice $P_m$ à coefficients dans $\E^+$, inversible
modulo $\phi^m(E(u))$.
Alors $\beta_n$ est engendré par les vecteurs $\lambda_n^r e_i$ ($1 \leq 
i \leq d$) et les vecteurs colonne de la matrice $X$.
\end{lemme}

\begin{proof}
Pour simplifier les écritures, on pose $M = \hat W' \Delta_1$.
On raisonne par récurrence sur $n$. Lorsque $n = 1$, l'espace $\beta_1$ 
est, par définition, égal à $\frac {\phi}{p^r} (W_D)$ et est donc 
engendré par les vecteurs colonne de la matrice produit $\code{Phi} 
\cdot M$. Par ailleurs, la matrice $P_0$ étant inversible modulo $E(u)$, 
elle l'est aussi modulo $E(u)^r$. On conclut la démonstration dans le 
cas $n = 1$ en remarquant que multiplier à droite par une matrice 
inversible revient à faire une combinaison linéaire \og inversible \fg\ 
des colonnes et donc ne change pas l'espace engendré par les colonnes.

On suppose maintenant que le lemme est vrai pour l'entier $n$.
Du fait que $E(u)^r \E^+ \otimes_{\phi,K_0} D \subset W_D$, 
on déduit aisément que $\lambda_{n+1}^r \E^+ \otimes_{K_0} D \subset 
\beta_{n+1}$. Ainsi, il suffit de démontrer que les colonnes de la
matrice $X$ du lemme engendrent l'image de $\beta_{n+1}$ dans le quotient $(\E^+ / 
\lambda_{n+1}^r \E^+) \otimes_{K_0} D$. Par le lemme chinois, ce dernier 
est isomorphe à la somme directe $(A_0 \otimes_{K_0} D) \oplus \cdots 
\oplus (A_n \otimes_{K_0} D)$ où on a noté $A_m = 
\frac{\E^+}{\phi^m(E(u)^r) \E^+}$. Or, d'après l'hypothèse de 
récurrence, si $0 \leq m < n$, l'image de $\beta_n$ dans $A_m 
\otimes_{K_0} D$ est engendrée par les vecteurs colonne 
de $\code{Phi} \cdot \phi(\code{Phi}) \cdots \phi^m(\code{Phi}) \cdot 
\phi^m(M)$ (on peut à nouveau supprimer la multiplication par $P_m$, 
cela ne modifie pas l'espace engendré). Ainsi, après un twist, l'image de 
$\E^+ \otimes_{\phi, \E^+} \beta_n$ dans $A_{m+1} \otimes_{K_0} D$ est 
engendrée par les vecteurs colonne de la matrice $\phi(\code{Phi}) 
\cdots \phi^{m+1}(\code{Phi}) \cdot \phi^{m+1}(M)$. Il en est, en 
réalité, de même de l'image de l'intersection $\E^+ \otimes_{\phi, \E^+} 
\beta_n \cap W_D$ étant donné que $E(u)^r$ est inversible dans $A_{m+1}$ 
(puisque $m+1 > 0$). Par ailleurs, 
l'image de cette intersection dans $A_0 \otimes_{K_0} D$ est engendrée 
par les vecteurs colonne de $M$ puisque $\E^+ \otimes_{\phi, 
\E^+} \beta_n$ contient $\phi(\lambda_n) \E^+ \otimes_{\phi,K_0} D$ et 
que $\phi(\lambda_n)$ est inversible dans $A_0$. Il résulte de cela que, 
pour tout $m \in \{0, \ldots, n\}$, l'image de $\beta_{n+1}$ dans $A_m 
\otimes_{K_0} D$ est engendrée par les vecteurs colonne de 
$\code{Phi} \cdot \phi(\code{Phi}) \cdots \phi^m(\code{Phi}) \cdot 
\phi^m(M)$. L'assertion du lemme au rang $n+1$ en découle.
\end{proof}

À partir de maintenant, on fixe l'entier $n$. Si l'on se donne n'importe 
quelle famille de matrices $P_m$, il est possible de calculer une 
matrice $X$ vérifiant les conditions du même lemme par des applications 
successives du lemme chinois. Étant donné que $\E^+$ est un anneau 
euclidien, calculer une base de $\beta_n$ peut alors se faire en 
réduisant sous forme d'Hermite la matrice par blocs $(\begin{matrix} X_n 
& \lambda_n^r I\end{matrix})$.
Toutefois, le fait de pouvoir choisir librement les $P_m$ permet de 
limiter les calculs, comme on se propose de l'expliquer maintenant.

Pour tout entier $m$, on considère l'application $\phi_K^{(m)} : K 
\otimes_{\phi^m, K_0} D \to K \otimes_{K_0} D = D_K$ obtenue en 
linéarisant $\phi^m$ ; c'est une application $K$-linéaire bijective. En 
désignant par $f_1, \ldots, f_d$ les vecteurs colonne de $\code{F}$, on 
appelle $W_m$ la matrice dont la $i$-ième colonne est l'unique 
antécédent de $f_i$ par $\phi_K^{(m)}$. Un calcul simple montre que l'on 
a l'expression suivante :
\begin{equation}
\label{eq:Wm}
W_m = \phi^{m-1}(\code{Phi}^{-1}) \cdots \phi(\code{Phi}^{-1}) 
\cdot \code{Phi}^{-1} \cdot \code{F}.
\end{equation}
On décompose, à présent, $W_m$ sous la forme $W_m = W'_m U_m$ où $W'_m 
\in \GL_d(\O_K)$ et $U_m$ est triangulaire supérieure (voir lemme 
\ref{lem:decomp}). Pour tout $j \leq d$, les $j$ premiers vecteurs
colonne de $W'_m$ forment une base de l'image réciproque par 
$\phi_K^{(m)}$ de $\Fil^h D_K$ lorsque $h_j \geq h > h_{j+1}$. 

On considère l'espace $\hat \Sk \otimes_{\phi^m, K_0} D$. Comme 
précédemment, on dispose d'une application $\hat \phi^{(m)} : \hat \Sk 
\otimes_{\phi^m, K_0} D \to \hat \Sk \otimes_{K_0} D$ obtenue en 
linéarisant $\phi^m$. D'autre part, $\hat \Sk \otimes_{\phi^m, K_0} D$ 
est également muni de la dérivation $\hat N_m = p^m \otimes N + u \: 
\frac d{du} \otimes \id$. De la relation $N \phi = p \phi N$, on déduit 
$\hat \phi^{(m)} \circ \hat N_m = \hat N_0 \circ \hat \phi^{(m)}$. Par 
ailleurs, en copiant la démonstration du lemme \ref{lem:hatNzero}, on
montre que le morphisme de réduction modulo $E(u)$ induit une
bijection :
\begin{equation}
\label{eq:hatNm}
(\hat \Sk \otimes_{\phi^m, K_0} D)^{\hat N_m = 0} 
\stackrel{\sim}{\longrightarrow} K \otimes_{\phi^m, K_0} D.
\end{equation}
En outre, l'algorithme \ref{algo:relHP} s'adapte sans difficulté à cette 
nouvelle situation. Ainsi, on sait calculer une matrice à coefficients 
dans $K_0[u]$ dont le $i$-ième vecteur colonne est congru modulo 
$E(u)^{h_i}$ à l'unique antécédent dans $ (\hat \Sk \otimes_{\phi^m, 
K_0} D)^{\hat N_m = 0}$ du $i$-ième vecteur colonne de $W'_m$. On note
$\hat W'_m$ cette matrice. Comme précédemment, on pose $W' = W'_1$,
$U = U_1$ et $\hat W' = \hat W'_1$.

\begin{lemme}
\label{lem:congrWm}
On a la congruence :
$$\hat W' \cdot \Delta_1 \equiv 
\phi(\code{Phi}) \cdots \phi^{m-1}(\code{Phi})
\cdot \hat W'_m \cdot U_m \cdot U^{-1} \cdot \Delta_1 \pmod{E(u)^r}.$$
\end{lemme}

\begin{proof}
On note $\hat W_m$ (resp. $\hat W$) la matrice à coefficients dans $\Sk$ 
dont les colonnes relèvent les colonnes de $W_m$ (resp. de $W$) par la 
bijection \eqref{eq:hatNm} (resp. avec $m = 1$). On considère la 
composée suivante :
$$(\hat \Sk \otimes_{\phi^m, K_0} D)^{\hat N_m = 0} 
\stackrel{f}{\longrightarrow}
(\hat \Sk \otimes_{\phi, K_0} D)^{\hat N = 0} 
\stackrel{\sim}{\longrightarrow} D_K^\phi = K \otimes_{\phi, K_0} D$$
où la première flèche, notée $f$, est l'application $\hat \phi^{(m-1)}$
twistée par $\phi$. La matrice de $f$ dans les bases canoniques est
$\phi(\code{Phi}) \cdots \phi^{m-1}(\code{Phi})$.
Soit $w_i$ l'image réciproque de $f_i \in D_K$ dans $D_K^\phi$. Les 
vecteurs colonne de $\hat W_m$ (resp. $\hat W$) correspondent alors
aux éléments de $(\hat \Sk \otimes_{\phi^m, K_0} D)^{\hat N_m = 0}$
(resp. $(\hat \Sk \otimes_{\phi, K_0} D)^{\hat N = 0}$) qui ont pour
image dans $D_K$ les vecteurs $w_i$. On en déduit que 
$\hat W = \phi(\code{Phi}) \cdots \phi^{m-1}(\code{Phi}) \cdot
\hat W_m$.
D'autre part, comme la bijection \eqref{eq:hatNm} est $K$-linéaire, on 
a $\hat W_m \cdot \Delta_1 \equiv \hat W'_m \cdot U_m \cdot \Delta_1 
\pmod{E(u)^r}$ et, pareillement, $\hat W \cdot \Delta_1 \equiv \hat
W' \cdot U \cdot \Delta_1 \pmod{E(u)^r}$. Des congruences et égalité 
précédentes, on déduit :
$$\hat W' \cdot U \cdot \Delta_1 \equiv \phi(\code{Phi}) \cdots 
\phi^{m-1}(\code{Phi}) \cdot \hat W'_m \cdot U_m \cdot \Delta_1
\pmod {E(u)^r}.$$
On conclut en multipliant à droite la congruence précédente par 
$\Delta_1^{-1} U^{-1} \Delta_1$, après avoir remarqué que cette matrice 
produit est à coefficients dans $\Sk$ parce qu'elle s'obtient à partir 
de $U^{-1}$ qui est triangulaire supérieure en multipliant le 
coefficient en position $(i,j)$ par $\lambda_1^{h_i - h_j}$ et que l'on 
a bien $h_i \geq h_j$ dès que $i \leq j$.
\end{proof}

\begin{theo}[Décompositions PLU simultanées]
\label{theo:LU}
On considère un entier $v \geq \log_p(2nd)$.
Alors, une matrice aléatoire $\omega$ à coefficients dans $\Z_p$ 
vérifie les conditions suivantes avec probabilité $\geq \frac 1 2$ :
\begin{enumerate}[(i)]
\item considérée comme matrice à coefficients dans $\Q_p$, la matrice 
$\omega$ est inversible et on a $\omega^{-1} \in p^{-v} M_d(\Z_p)$.
\item pour tout entier $m \in \{1, \ldots, n\}$, il existe des 
matrices $L_m$ et $V_m$ (uniquement déterminées modulo $E(u)^r$) qui 
sont respectivement triangulaire inférieure unipotente et triangulaire 
supérieure et qui vérifient la congruence :
\begin{equation} 
\label{eq:LU} 
\omega \cdot \hat W'_m \equiv L_m V_m \pmod{E(u)^r}
\end{equation}
de plus, les matrices $L_m$ et $V_m$ sont uniquement déterminées si
on impose de plus que tous leurs coefficients sont des polynômes de 
degré $< er$.
\item \label{item:valLm}
pour tout $m$, l'image de $L_m$ dans $M_d(\Sk/E(u)^r) \simeq 
M_d(K[u_\pi] / u_\pi^r)$ appartient à $\pi^{-e \rho_2} M_d(\O_K[u_\pi]/
u_\pi^r)$ avec
$$\textstyle \rho_2 = r(\frac 1 e + v + \val(\df_{K/K_0})).$$
\item si les $\hat W'_m$ sont connus à précision $O(p^N)$, alors
on peut calculer les images de $L_m$ dans $M_d(K[u_\pi] / u_\pi^r)$
à précision $O(p^{N - 2 \lceil \rho_2 \rceil})$.
\end{enumerate}
\end{theo}

\begin{proof}
Pour $r = 1$, il s'agit du théorème 2.12 de \cite{caruso-LU} étant donné 
que, par construction, l'image de $\hat W'_m$ dans $M_d(\Sk/E(u)) \simeq 
M_d(K)$ est inversible dans l'anneau $M_d(\O_K)$.

Pour $r > 1$, on montre, en reprenant la démonstration du lemme 
\ref{lem:precrelHP}, que, pour tout $j < r$, l'image de $\hat W'_m$ 
dans $M_d(K[u_\pi]/u_\pi^j)$ appartient à $\pi^{-j w} \cdot M_d
(\O_K[u_\pi]/u_\pi^j)$ avec $w = \frac 1 e + \val(\df_{K/K_0})$.
Ainsi l'image de $\hat W'_m$ dans $M_d(K[u_\pi]/u_\pi^r)$ appartient
à $M_d(\O_K[X]/X^r)$ où $X = \frac{u_\pi}{\pi^w}$.
Donc les réductions modulo $E(u)^r$ de tous les mineurs de $\omega 
\cdot \hat W'_m$ sont dans $\O_K[X]/X^r$. Or, par ailleurs, 
on sait que :
\begin{enumerate}[(1)]
\item les coefficients de $L_m$ s'obtiennent comme quotient de deux tels 
mineurs, le dénominateur étant toujours un mineur principal (voir par 
exemple \S 1.1.1 de \cite{caruso-LU}) ;
\item qu'avec probabilité $\geq \frac 1 2$, on a $\omega^{-1} \in p^{-v} 
M_d(\Z_p)$ et les images dans $\Sk/E(u) \simeq K$ des mineurs principaux 
de $\omega \hat W'_m$ est divisible par $\pi^v$ (cela résulte de la 
démonstration du théorème 2.12 de \cite{caruso-LU}).
\end{enumerate}
On conclut en remarquant qu'un élément de $\O_K[X]/X^r$ dont le terme 
constant est divisible par $\pi^v$ a pour inverse un élément de 
$\pi^{-v} \O_K[Y]/Y^r$ avec $Y = \frac X{\pi^v}$.
\end{proof}

\begin{rem}
\label{rem:rcard}
En utilisant non pas une décomposition LU mais une décomposition LU par 
blocs (en regroupant entre eux les $h_i$ qui sont égaux), on peut 
remplacer la borne $\log_q(2 n d)$ qui apparait dans le théorème 
\ref{theo:LU} par $\log_q(2 n r_\card)$ où $r_\card$ est le cardinal de 
l'ensemble $\{h_1, \ldots, h_d\}$ (c'est-à-dire le nombre de poids de 
Hodge-Tate de la représentation semi-stable $V$, comptés \emph{sans} 
multiplicité). On a évidemment $r_\card \leq \min(r,d)$. On renvoie au 
théorème 2.12 de \cite{caruso-LU} pour plus de précisions à ce sujet.
\end{rem}

\medskip

À partir de maintenant, on fixe des matrices $\omega$, $L_m$ et $V_m$
vérifiant les conditions du théorème précédent.
On note $t_0 = 1$ et pour tout $m \geq 1$, on pose $t_m = t_{m-1} \cdot 
\phi^m(\lambda_1^r) \cdot \big(\phi^m(\lambda_1^r)^{-1} \text{ mod } 
E(u)^r\big)$ où, si $s \in \E^+$, la notation $s^{-1} \text{ mod } 
E(u)^r$ désigne un inverse de $s$ modulo $E(u)^r$. Pour tout indice $m > 
0$, on a alors $t_m \equiv 1 \pmod {E(u)^r}$ et $t_m \equiv 0 \pmod 
{\phi^{m'}(E(u)^r)}$ si $1 \leq m' \leq m$. On définit une suite de matrices 
$(Y_m)_{1 \leq m \leq n}$ par 
\begin{equation}
\label{eq:recYm}
Y_1 = L_1 \quad \text{et} \quad  Y_{m+1} = t_m L_{m+1} + (1-t_m) 
\phi(Y_m)
\end{equation}
pour $m \in \{1, \ldots, n-1\}$. On pose enfin : 
\begin{equation} 
\label{eq:defXn}
X_n = \code{Phi} \cdot \phi(\code{Phi}) \cdots \phi^{n-1}(\code{Phi})
\cdot \omega^{-1} \cdot Y_n \cdot \Delta_n
\end{equation}
où on rappelle que $\Delta_n$ est la matrice diagonale dont le 
$i$-ième coefficient diagonal est $\lambda_n^{r-h_i}$.

\begin{prop}
Avec les notations précédentes, les vecteurs colonne de $X_n$
forment une base de $\beta_n$.
\end{prop}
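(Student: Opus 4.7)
Je propose d'appliquer le lemme~\ref{lem:betan} � la matrice $X_n$. Notant $\Pi_k := \code{Phi} \cdot \phi(\code{Phi}) \cdots \phi^{k-1}(\code{Phi})$, il s'agit donc de v�rifier que, pour tout $m \in \{0, \ldots, n-1\}$, il existe $P_m \in M_d(\E^+)$, inversible modulo $\phi^m(E(u))$, telle que
$$X_n \equiv \Pi_{m+1} \cdot \phi^m(\hat W' \Delta_1) \cdot P_m \pmod{\phi^m(E(u)^r)}.$$

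La premi�re �tape est l'analyse de $Y_n$ modulo $\phi^m(E(u)^r)$. Les propri�t�s $t_{m'} \equiv 1 \pmod{E(u)^r}$ et $t_{m'} \equiv 0 \pmod{\phi^{k}(E(u)^r)}$ (pour $1 \le k \le m'$), combin�es � la relation de r�currence \eqref{eq:recYm}, donnent par r�currence imm�diate sur $n$ :
$$Y_n \equiv \phi^m(L_{n-m}) \pmod{\phi^m(E(u)^r)}, \quad m \in \{0, \ldots, n-1\}.$$
Pour $m = 0$, c'est une cons�quence directe de $t_{n-1} \equiv 1 \pmod{E(u)^r}$, qui livre $Y_n \equiv L_n$ ; pour $m \ge 1$, la congruence $t_{n-1} \equiv 0 \pmod{\phi^m(E(u)^r)}$ ram�ne $Y_n$ � $\phi(Y_{n-1})$ modulo $\phi^m(E(u)^r)$, et l'hypoth�se de r�currence appliqu�e � $Y_{n-1}$ modulo $\phi^{m-1}(E(u)^r)$ permet de conclure.

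Pour la deuxi�me �tape, on met la formule en ad�quation avec la cible. La congruence $\omega \hat W'_{n-m} \equiv L_{n-m} V_{n-m} \pmod{E(u)^r}$ du th�or�me~\ref{theo:LU}, transport�e par $\phi^m$ (en utilisant que $\phi^m(\omega) = \omega$ puisque $\omega \in M_d(\Z_p)$), exprime $\phi^m(L_{n-m})$ en fonction de $\phi^m(\hat W'_{n-m})$ et de la matrice triangulaire sup�rieure $\phi^m(V_{n-m})$. Parall�lement, le lemme~\ref{lem:congrWm} appliqu� � l'indice $n-m$ puis transport� par $\phi^m$ donne
$$\phi^m(\hat W' \Delta_1) \equiv \bigl(\phi^{m+1}(\code{Phi}) \cdots \phi^{n-1}(\code{Phi})\bigr) \cdot \phi^m(\hat W'_{n-m}) \cdot \phi^m(U_{n-m} U^{-1} \Delta_1) \pmod{\phi^m(E(u)^r)}.$$
En substituant ces deux identit�s dans la formule \eqref{eq:defXn} et en exploitant les d�compositions t�lescopiques $\Pi_n = \Pi_{m+1} \cdot \bigl(\phi^{m+1}(\code{Phi}) \cdots \phi^{n-1}(\code{Phi})\bigr)$ et $\Delta_n = \Delta_m \cdot \phi^m(\Delta_1) \cdot \phi^{m+1}(\Delta_{n-m-1})$, les facteurs $\omega$ et $\omega^{-1}$ se compensent et l'on aboutit � la congruence souhait�e, $P_m$ �tant assembl�e � partir des matrices triangulaires $\phi^m(V_{n-m}^{-1}), \phi^m(U_{n-m}), \phi^m(U^{-1})$ et des facteurs diagonaux $\Delta_m, \phi^{m+1}(\Delta_{n-m-1})$.

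L'obstacle principal sera la v�rification de l'inversibilit� de $P_m$ modulo $\phi^m(E(u))$ : il faut contr�ler les coefficients diagonaux du produit triangulaire obtenu et s'assurer que les facteurs en $\phi^m(\lambda_1)$ issus des diagonales $\Delta$ se compensent exactement pour laisser un reste inversible. Une fois le lemme~\ref{lem:betan} appliqu�, $\beta_n$ est engendr� par les $d$ colonnes de $X_n$ et les vecteurs $\lambda_n^r e_i$ ($1 \le i \le d$). Pour conclure que les colonnes de $X_n$ forment d�j� une base, on observe que $\E^+ = \Sk[1/p]$ est principal, que $\beta_n$ est libre de rang $d$, et qu'une comparaison de d�terminants suffit : le produit $\lambda_n^r \Delta_n^{-1}$ reste � coefficients dans $\E^+$ (ses termes diagonaux �tant les $\lambda_n^{h_i}$), $\det Y_n \equiv 1 \pmod{E(u)^r}$ est une unit� de $\E^+$, et $\det \Pi_n$ �galement (c'est un �l�ment non nul de $K_0$, puisque $p$ est inversible dans $\E^+$) ; les vecteurs $\lambda_n^r e_i$ appartiennent alors bien au sous-module engendr� par les colonnes de $X_n$.
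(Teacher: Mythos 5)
Your proposal follows essentially the same route as the paper's own proof: the induction showing $Y_n \equiv \phi^m(L_{n-m}) \pmod{\phi^m(E(u)^r)}$, the substitution via Th\'eor\`eme~\ref{theo:LU} and Lemme~\ref{lem:congrWm} to put $X_n$ into the form required by Lemme~\ref{lem:betan}, and the final observation that the $\lambda_n^r e_i$ already lie in the $\E^+$-span of the columns of $X_n$ (because $\lambda_n^r\Delta_n^{-1}$, $Y_n^{-1}$ and $\Pi_n^{-1}$ all have entries in $\E^+$) all match the paper; the invertibility of $P_m$ that you flag as the remaining obstacle is likewise merely asserted there. Two small slips: you should write $\phi^m(V'_{n-m})$ for an upper-triangular inverse of $V_{n-m}$ \emph{modulo} $E(u)^r$ rather than $\phi^m(V_{n-m}^{-1})$, since $V_{n-m}$ need not be invertible in $\E^+$; and what makes $\det Y_n$ a unit is that $Y_n$ is unipotent, so $\det Y_n = 1$ exactly --- the weaker congruence $\det Y_n \equiv 1 \pmod{E(u)^r}$ would not by itself give a unit of $\E^+$.
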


\begin{proof}
Il est facile de montrer par récurrence sur $s$ que si $0 \leq m < s 
\leq n$, la matrice $Y_s$ est triangulaire inférieure unipotente et congrue à 
$\phi^{m}(L_{s-m})$ modulo $\phi^{m}(E(u)^r)$. Par ailleurs, il suit de 
l'égalité \eqref{eq:LU} que la matrice $V_m$ est inversible modulo 
$E(u)^r$ : il existe une matrice $V'_m$ à coefficients dans $\E^+$ telle 
que $V_m V'_m \equiv I \pmod{E(u)^r}$. Comme, en outre, $V_m$ est 
triangulaire supérieure, on peut supposer que $V'_m$ l'est également. Il 
suit alors des congruences $Y_n \equiv \phi^m(L_{n-m}) \pmod 
{\phi^m(E(u)^r)}$ ($0 \leq m < n$) et du fait que $\omega^{-1}$ soit 
à coefficients dans $\Q_p$ (et donc fixe par $\phi$) que :
$$\begin{array}{rcl}
X_n & \equiv & \code{Phi} \cdot \phi(\code{Phi}) \cdots \phi^{n-1}(\code{Phi})
\cdot \phi^m(\hat W'_{n-m} \Delta_1) 
\smallskip \\
& & \hspace{2cm} \phi^m (\Delta_1^{-1} V'_{n-m} \Delta_1) \cdot
(\phi^m(\Delta_1)^{-1} \Delta_n) \pmod{\phi^m(E(u)^r)}
\end{array}$$
On remarque que, bien que $\Delta_1$ ne soit pas inversible, l'écriture 
précédente a bien un sens et définit une matrice à coefficients dans 
$\E^+$. En effet, \emph{primo}, le fait que $\phi^m(\lambda_1)$ divise 
$\lambda_n$ (car $m < n$) permet de donner un sens au second facteur 
$\phi^m(\Delta_1) ^{-1} \Delta_n$ (il s'agit de la matrice diagonale 
dont le $i$-ième coefficient diagonal est 
$(\frac{\lambda_n}{\phi^m(\lambda_1)})^{r-h_i}$) et \emph{secundo}, le 
fait que $V_{n-m}$ soit triangulaire supérieure permet de définir le produit 
$\Delta_1^{-1} V_{n-m} \Delta_1$ comme la matrice triangulaire supérieure 
dont le coefficient à la position $(i,j)$ (avec $i \leq j$) est obtenu 
en multipliant le coefficient $(i,j)$ de $V_{n-m}$ par $\lambda_1^{h_i-h_j}$ 
(ce qui a bien un sens car $h_i \geq h_j$ étant donné que $i \leq j$). 
En utilisant à présent le lemme \ref{lem:congrWm}, on obtient :
$$X_n \equiv \code{Phi} \cdot \phi(\code{Phi}) \cdots \phi^m(\code{Phi})
\cdot \phi^m(\hat W' \Delta_1) \cdot P_m \pmod{\phi^m(E(u)^r)}.$$
avec 
$$P_m = \phi^m (\Delta_1^{-1} U_m \Delta_1) \cdot 
\phi^m (\Delta_1^{-1} V'_{n-m} \Delta_1)
\cdot (\phi^m(\Delta_1)^{-1} \Delta_n).$$
Comme précédemment, le fait que $U_m$ soit triangulaire supérieure
montre que le facteur $\Delta_1^{-1} U_m \Delta_1$ est bien défini. De
plus, on vérifie sans mal que la matrice $P_m$ est inversible modulo
$\phi^m(E(u)^r)$.
On est ainsi dans les conditions d'application du lemme \ref{lem:betan} 
duquel on déduit que la famille formée des vecteurs colonne de $X_n$ et 
des $\lambda_n^r e_i$ ($1 \leq i \leq d$) engendre $\beta_n$. Par 
ailleurs, on sait que la matrice $Y_n$ est triangulaire inférieure 
unipotente ; elle est donc inversible. Ceci implique, en revenant à la 
définition de $X_n$ (voir formule \ref{eq:defXn}), que tous les 
$\lambda_n^r e_i$ ($1 \leq i \leq d$) sont dans l'espace engendré par 
les vecteurs colonne de $X_n$. Le lemme en découle.
\end{proof}

\subsubsection{La matrice de $\phi$ sur le module de Breuil-Kisin}
\label{subsec:phiBK}

Nous venons de déterminer une $\E^+$-base de $\beta_n$ ce qui correspond 
d'après le théorème \ref{theo:GL} de Génestier et Lafforgue à une 
$\Sk_\nu$-base du module de Breuil-Kisin $\Dk_\nu$ dès que $\nu \leq 
\frac 1{ep^n}$. Dans cette base, la matrice de l'opérateur $\phi$ est 
donnée par la formule $\lambda_1^r \cdot X_n^{-1} \cdot \code{Phi} \cdot 
\phi(X_n)$ et vaut donc :
\begin{equation}
\label{eq:phiBK}
\code{PhiBK} = \lambda_1^r \cdot \Delta_n^{-1} \cdot Y_n^{-1} \cdot
\omega \cdot \phi^n(\code{Phi}) \cdot \phi(Y_n) \cdot \omega^{-1}
\cdot \phi(\Delta_n)
\end{equation}
(on rappelle que $\omega^{-1}$ est à coefficients dans $\Q_p$ et 
donc fixe par $\phi$). On pourra noter que, dans
le produit ci-dessus, rien n'est véritablement difficile à 
calculer. En effet, clairement déjà, appliquer le Frobenius et 
multiplier des matrices ne pose aucun problème particulier au niveau 
calculatoire et, en particulier, n'entraîne aucune perte de précision. 
L'inversion de $P$ n'est pas non plus très délicate, étant donné qu'il 
s'agit d'une matrice à coefficients dans $K_0$ et que, par ailleurs, on
dispose d'un contrôle sur les dénominateurs qui peuvent apparaître (voir
théorème \ref{theo:LU}).
Enfin, les matrices $\Delta_n$ et $Y_n$ sont respectivement diagonale et 
triangulaire supérieure unipotente et s'inversent donc aisément. On 
notera toutefois que $\Delta_n$ n'est, en réalité, pas inversible comme 
matrice à coefficients dans $\E^+$. Toutefois on sait, par avance, que 
la matrice $\code{PhiBK}$ définie comme le produit \eqref{eq:phiBK} est 
à coefficients dans $\E^+_{1/(ep^n)}$. On a donc la garantie \emph{a 
priori} que $\Delta_n$ divise $\lambda_1^r \cdot Y_n^{-1} \cdot \omega
\cdot \phi^n(\code{Phi}) \cdot \phi(Y_n) \cdot \omega^{-1} \cdot 
\phi(\Delta_n)$. Ainsi, pour faire le calcul, il suffit de prendre le 
quotient de la division euclidienne de chaque entrée $(i,j)$ de la 
matrice produit précédente par $\lambda_n^{r-h_i}$.

\medskip

L'objectif de la suite de ce numéro est d'obtenir une minoration de la 
valuation $v_\nu$ de la matrice $\code{PhiBK}$. Si 
$A$ est une matrice à coefficients dans $\E^+_\nu$, on note $v_\nu(A)$ 
la plus petite valuation d'un coefficient de $A$. Étant donné que $P$ et 
$\code{Phi}$ sont à coefficients dans $\O_{K_0}$, la relation 
\eqref{eq:phiBK} implique
$$v_\nu(\Delta_n \cdot \code{PhiBK}) \geq v_\nu(\lambda_1^r) + 
v_\nu(\omega^{-1}) + v_\nu(Y_n^{-1}) + v_\nu(\phi(Y_n)) + 
v_\nu(\phi(\Delta_n)).$$
Or, du fait que $E(u)$ est un polynôme à coefficients dans $\O_{K_0}$, 
on déduit que $v_\nu(\phi^m(E(u))) \geq 0$ pour tout $m$. Ainsi, on 
trouve $v_\nu(\lambda_1) \geq -1$ et $v_\nu(\phi(\Delta_n)) \geq -nr$ 
(puisque les $h_i$ sont tous compris entre $0$ et $r$). Par ailleurs, on 
vérifie directement que $v_\nu(\phi(x)) \geq v_\nu(x)$ pour tout $x \in 
\E^+_\nu$, d'où on déduit que $v_\nu(\phi(Y_n)) \geq v_\nu(Y_n)$. 
D'autre part, en utilisant le fait que $Y_n$ est triangulaire inférieure 
unipotente, on montre facilement en exprimant son inverse que 
$v_\nu(Y_n^{-1}) \geq (d-1) v_\nu(Y_n)$. De plus, on rappelle que l'on a 
supposé que $\omega$ vérifiait les conditions du théorème \ref{theo:LU} ;
ainsi, en particulier, on a $v_\nu(\omega^{-1}) \geq -v$ où on rappelle
que $v$ est un entier $\geq \log_p(2 n r_\card)$ (voir remarque
\ref{rem:rcard}). On remarque enfin que pour $m 
\leq n$, on a $v_\nu(\phi^m(E(u))) = \nu e p^n \leq 1$. Ainsi 
$v_\nu(\lambda_n) \leq 0$ et, comme le produit $\Delta_n \cdot 
\code{PhiBK}$ s'obtient en multipliant la $i$-ième ligne de 
$\code{PhiBK}$ par $\lambda_n^{r-h_i}$, on a $v_\nu(\Delta_n \cdot 
\code{PhiBK}) \leq v_\nu(\code{PhiBK})$. En mettant ensemble tout ce 
qui précède, on trouve :
\begin{equation}
\label{eq:valPhiBK}
v_\nu(\code{PhiBK}) \geq d \cdot v_\nu(Y_n) - v - r(n+1).
\end{equation}
Il ne reste donc plus qu'à obtenir une borne inférieure pour 
$v_\nu(Y_n)$. Or, en revenant aux définitions, on obtient facilement 
que :
\begin{equation}
\label{eq:valYn}
v_\nu(Y_n) \geq \min_{0 \leq m \leq n} v_\nu(L_m) + c_m + \cdots
+ c_n
\end{equation}
où on a posé $c_i = \min(v_\nu(t_i), 0)$. On rappelle que les $t_i$ sont 
définis par récurrence par les formules $t_0 = 1$ et $t_i = t_{i-1} 
\cdot \phi^i(\lambda_1^r) \cdot t'_i$ où $t'_i \in \E^+_\nu$ désigne un 
inverse de $\phi^i(\lambda_1^r)$ modulo $E(u)^r$. Ainsi défini, $t'_i$ 
est uniquement déterminé modulo $E(u)^r$. À partir de maintenant, pour
fixer les idées, on supposera que c'est un polynôme à coefficients dans
$K_0$ de degré $< er$ ; il est ainsi uniquement déterminé.

Pour minorer la valuation de $Y_n$, on doit minorer celle de $L_m$, 
celles des $\phi^i (\lambda_1^r)$ ainsi que celles des $t'_i$. En ce qui 
concerne $\phi^i (\lambda_1^r)$, on a déjà dit que 
$v_\nu(\phi^i(\lambda_1)) \geq -1$, ce qui donne directement 
$v_\nu(\phi^i(\lambda_1^r)) \geq -r$. Pour les deux autres, on 
considère l'isomorphisme $K_0$-linéaire :
$$\begin{array}{rcl}
T \, : \quad K_0^{<er}[u] & \to & K[u_\pi]/u_\pi^r \\
f & \mapsto & 
\displaystyle T(f) = \sum_{s=0}^{r-1} \frac 1{s!} \cdot \frac{d^s 
f}{du^s} (\pi) \cdot u_\pi^s.
\end{array}$$
où la notation $K_0^{<er}[u]$ désigne l'ensemble des polynômes à 
coefficients dans $K_0$ de degré $< er$. D'après la démonstration du 
lemme 2.4.4 de \cite{carliu}, on a 
$$T^{-1}(\O_K[u_\pi]/u_\pi^r) \subset
p^{-r \lceil \val(\df_{K/K_0}) \rceil} \: \O_{K_0}[u].$$
De la condition \eqref{item:valLm} du théorème \ref{theo:LU}, on déduit
ainsi que, si parmi toutes les matrices $L_m$ possibles, on choisit
celle à coefficients à $K_0^{<er}[u]$, alors on aura 
$v_0(L_m) \geq - r \cdot \lceil \frac 1 e + v + 2 \: \val(\df_{K/K_0})
\rceil$ et donc \emph{a fortiori}
\begin{equation}
\label{eq:valLm}
v_\nu(L_m) \geq r \cdot \left\lceil \frac 1 e + v + 2 \cdot 
\val(\df_{K/K_0}) \right\rceil.
\end{equation}
De même, pour minorer $v_\nu(t_i)$, il suffit de minorer la valuation 
de $T(t'_i)$. C'est l'objet du lemme suivant.

\begin{lemme}
\label{lem:valti}
Pour tout entier $i$, on a $T(t'_i) \in p^{-m_i} \: \O_K[u_\pi]/u_\pi^r$ 
où $m_i$ est la partie entière de $\frac r {e(p^i-1)}$.
\end{lemme}

\begin{proof}
On pose $f = \phi^i(\lambda_1)$ de sorte que $T(t'_i) = T(f)^{-r}$.
Par ailleurs, comme $E(u)$ est un polynôme d'Eisenstein de degré $e$, 
l'élément $f$ s'écrit sous la forme $\frac{u^{ep^i}} p + 
\sum_{j=0}^{e-1} a_j u^{j p^i}$ pour certains $a_j \in \O_{K_0}$ avec 
$a_0 = 1$. Les coefficients $b_s$ de $T(f)$ sont alors donnés par :
$$b_s = \frac 1{s!} \cdot \frac{d^s f}{du^s} (\pi) = 
\frac 1 p \binom {e p^i} s \pi^{e p^i-s} + \sum_{j=0}^{e-1} \binom {j 
p^i} s a_j \pi^{j p^i - s}.$$
En examinant cette formule, on remarque que tous les $b_s$ sont dans
$p^{-1} \O_K$ et, mieux encore, que $b_s \in \O_K$ pour $s \leq e
(p^i-1)$. Quant à $b_0$, il est congru à $1$ modulo $\pi$ et donc, en
particulier, inversible dans $\O_K$. On en déduit que $T(f)$ est
inversible dans $K[u_\pi]/u_\pi^r$ et que son inverse appartient à
l'image de l'application $\O_K[u_\pi, \frac{u_\pi^{e(p^i-1)}} p] \to
K[u_\pi]/u_\pi^r$. L'inverse de $T(f^r) = T(f)^r$ appartient donc
également à cette image et, à plus forte raison, à $p^{-m_i} \: 
\O_K[u_\pi]/u_\pi^r$.
\end{proof}

On déduit du lemme \ref{lem:valti} que
$v_\nu(t'_i) \geq - r (\frac 1 {e(p^i-1)} + \lceil \val(\df_{K/K_0}) 
\rceil)$.
En revenant aux définitions, cela implique $c_i \geq -r i (2 + \lceil 
\val(\df_{K/K_0}) \rceil)$, ce qui donne, pour finir, en combinant 
avec \eqref{eq:valPhiBK}, \eqref{eq:valYn} et \eqref{eq:valLm},
\begin{equation}
\label{eq:ctec}
v_\nu(\code{PhiBK}) 
\geq - d r \left( \frac{n(n+1)} 2 \cdot (2 + \lceil
\val(\df_{K/K_0}) \rceil) + \left\lceil \frac 1 e + v + 2 \cdot
\val(\df_{K/K_0}) \right\rceil \right) - v - r(n+1).
\end{equation}
La formule précédente n'est pas très ragoutante, mais on peut néanmoins 
estimer simplement son ordre de grandeur en fonction uniquement de $r$,
$d$, $\nu$ et des constantes liées au corps $K$. En effet, si l'on 
souhaite uniquement calculer l'action de $\phi$ sur $\Dk_\nu$, on peut
choisir pour $n$ n'importe quel entier $\geq - \log_p(e\nu)$. De la
même façon, la seule contrainte portant sur $v$, hormis le fait qu'il
soit entier, est $v \geq \log_p(2n r_\card)$ où on rappelle que $r_\card$
est un entier inférieur ou égal à $\min(r,d)$. Ainsi, en prenant pour
$n$ et $v$ les entiers les plus proches des bornes précédentes, on 
obtient l'estimation :
$$\textstyle v_\nu(\code{PhiBK}) \geq O(dr \cdot \log_p^2(\frac 1 \nu) + 
dr \cdot \log_p(r_\card))$$
où la constante (négative) cachée dans le $O$ ne dépend que du corps $K$. 
Par ailleurs, afin de pouvoir utiliser le théorème \ref{theo:surconv},
nous allons devoir choisir un paramètre $\nu < \frac{per}{p-1}$. Sous
cette hypothèse, on voit que le terme $dr \log_p(r_\card)$ qui apparaît 
dans le $O$ devient négligeable devant son compagnon. On obtient ainsi
simplement $v_\nu(\code{PhiBK}) \geq O(dr \cdot \log_p^2(\frac 1 \nu))$.

\subsubsection{L'algorithme sous forme synthétique}
\label{subsec:etape1synthese}

Rappelons pour commencer que l'objectif de cette première étape est, 
étant donné
\begin{itemize}
\item un $(\phi,N)$-module filtré admissible effectif $D$ donné par 
le quadruplet $(\code{Phi}, \code{N}, \code{H}, \code{F})$ (voir \S 
\ref{subsec:repr}),
\item un nombre rationnel $\nu > 0$ et
\item un nombre entier $N$
\end{itemize}
de calculer le module de Breuil-Kisin $\Dk_\nu$ sur $\Sk_\nu[1/p]$ avec 
précision $O(u^N)$. Comme $\Dk_\nu$ est un $\Sk_\nu[1/p]$-module libre de 
rang $d = \dim_{K_0} D$, se donner $\Dk_\nu$ revient à se donner la 
matrice \code{PhiBK} donnant l'action de $\phi$ sur $\Dk_\nu$ dans une 
certaine base. En mettant ensemble tout ce qui a été dit dans les \S\S 
\ref{subsec:VD}, \ref{subsec:betan}, \ref{subsec:phiBK}, on obtient 
l'algorithme suivant pour calculer \code{PhiBK}.

\begin{enumerate}[(1)]
\item Déterminer un entier $n$ tel que $\frac 1 {ep^n} \leq \nu$
\item Calculer les matrices $W_m$ ($0 \leq m < n$) données par la
formule \eqref{eq:Wm}
\item Écrire $W_m$ sous la forme $W_m = W'_m \cdot U_m$ avec 
$W'_m \in \GL_d(\O_K)$ et $U_m$ triangulaire supérieure (voir
lemme \ref{lem:decomp})
\item Calculer les matrices $\hat W'_m$ ($0 \leq m < n$) par une
variante de l'algorithme \ref{algo:relHP}
\item \label{item:alea} Tirer aléatoirement une matrice $\omega \in M_d
(\Z_p)$
\item \label{item:LU} Calculer la décomposition LU des matrices $\omega \cdot
\hat W'_m \text{ mod } E(u)^r$ ($0 \leq m < n$) :
$$\omega \cdot \hat W'_m \equiv L_m V_m \pmod{E(u)^r}.$$
Si l'une des matrices $\omega \cdot \hat W'_m$ n'est pas connue avec
suffisamment de précision pour pouvoir déterminer sa décomposition LU,
revenir en \eqref{item:alea}
\item Calculer les matrices $Y_1, \ldots, Y_n$ modulo $u^N$ par la
formule de récurrence \eqref{eq:recYm}
\item Calculer et retourner la matrice \code{PhiBK} donnée par la
formule \eqref{eq:phiBK}
\end{enumerate}

\medskip

La correction de l'algorithme ci-dessus est une conséquence de la 
discussion menée dans les \S\S \ref{subsec:VD}, \ref{subsec:betan}, 
\ref{subsec:phiBK}. De plus, si tous les coefficients des matrices
\code{Phi}, \code{N} et \code{F} sont connus avec précision $O(p^M)$,
en suivant les pertes de précision au fil des calculs, on trouve
successivement que :
\begin{itemize}
\item les coefficients des matrices $W_m$ sont connus avec précision 
au moins $O(p^{M-mr})$ ;
\item les coefficients des matrices $W'_m$ sont connus avec précision 
au moins $O(p^{M-mr-dr})$ ;
\item par le lemme \ref{lem:precrelHP}, les coefficients de la matrice 
$\hat W'_m$ sont connus avec précision au moins $O(p^{M-mr-dr - \rho_1})$ 
où $\rho_1$ est la constante définie dans ce lemme où on a pris $i = r$ ;
\item par l'alinéa \eqref{item:valLm} du théorème \ref{theo:LU}, avec
probabilité $\geq \frac 1 2$, le calcul de l'étape \eqref{item:LU} n'échoue
pas et les coefficients de la matrice $L_m$ sont connus avec précision
au moins $O(p^{M-mr-dr - \rho_1 - 2 \lceil \rho_2 \rceil})$ où $\rho_2$ 
est la constante définie dans ce théorème ;
\item d'après l'estimation 
$v_\nu(t'_i) \geq - r (\frac 1 {e(p^i-1)} + \lceil \val(\df_{K/K_0})
\rceil)$ qui se déduit du lemme \ref{lem:valti}, les coefficients des
matrices $Y_m$ puis de la matrice \code{PhiBK} que l'on renvoie sont 
connus avec précision au moins $O(p^{M - M_0})$ où $M_0$ est une 
constante que l'on peut exprimer explicitement.
\end{itemize}
De même qu'à la fin du \S \ref{subsec:phiBK}, on peut déduire de
l'expression explicite de $M_0$ que, sous l'hypothèse supplémentaire
$\nu < \frac{per}{p-1}$, on a :
\begin{equation}
\label{eq:precreletape1}
\textstyle M_0 = O(dr \cdot \log_p^2(\frac 1 \nu)).
\end{equation}

\subsection{Étape 2 : Calcul d'un réseau dans un module de Breuil-Kisin}
\label{subsec:etape2}

Maintenant que nous avons calculé $\Dk_\nu$,
l'étape suivante consiste à déterminer un $\Sk_\nu$-réseau $\Mk_\nu$ à 
l'intérieur de $\Dk_\nu$ qui soit un module de Kisin de hauteur $\leq 
r$. Remarquons que l'on connaît déjà un réseau à l'intérieur de 
$\Dk_\nu$ : c'est celui donné par les vecteurs d'une base de la matrice 
\code{PhiBK} que l'on a calculée lors de la première étape et que l'on
notera à partir de maintenant $\Mk_\nu^\GeLa$. En outre, de même que
précédemment on a obtenu une estimation de la constante $M_0$, on peut 
déterminer, par 
des arguments analogues, une constante $C'$ qui dépend de façon 
polynômiale de $d$, $r$ et $\log_p(\frac 1 \nu)$ et qui vérifie :
$$p^{C'} \Mk_\nu^\GeLa \subset \Sk_\nu \otimes_\Sk
\beta_n(L) \subset p^{-C'} \Mk_\nu^\GeLa$$
où $L \subset D$ est le $\O_{K_0}$-réseau standard engendré par les 
vecteurs de la base de $D$ qui a été fixée au départ\footnote{Il s'agit
de la base de $D$ dans lesquelles les matrices \code{Phi}, \code{N} et
\code{F} ont été écrites.}.
Par ailleurs, d'après la remarque \ref{rem:GL}, il existe une constante 
$C$ qui s'exprime de façon polynômiale en $d$, $r$ et $-\log_p(\nu)$ et
un module de Breuil-Kisin $\Mk_\nu$ sur $\Sk_\nu$ tels que
$$p^{-C'} \Sk_\nu \otimes_\Sk \beta_n(L) \subset
\Mk_\nu \subset p^{C'-C} \cdot (\Sk_\nu \otimes_\Sk \beta_n(L)).$$
En posant $c_\nu = C + 2 C'$, on obtient $p^{c_\nu} \cdot \Mk_\nu^\GeLa 
\subset \Mk_\nu \subset \Mk_\nu^\GeLa$. De plus, par les résultats du 
\S \ref{subsec:phiBK}, on sait calculer une constante explicite $c$
telle que $\phi(\Mk_\nu^\GeLa) \subset p^{-c} \cdot \Mk_\nu^\GeLa$.
On a en outre $c = O(dr \cdot \log_p^2(\frac 1 \nu))$ où la constante
dans le $O(\cdot)$ dépend du corps $K$ et, à vrai dire, plus précisément,
de la valuation de la différente de $K$ à $K_0$. De plus, en revenant au
calcul de \S \ref{subsec:phiBK}, on se rend compte que cette dépendance
est, au pire, polynômiale.

\medskip

Dans cette partie, nous expliquons comment déterminer $\Mk_\nu$ --- 
ou plutôt un $\Mk_\nu$ convenable --- connaissant $\Mk_\nu^\GeLa$ et 
$c_0$. Le \S \ref{subsec:algomax} est consacré à quelques rappels, 
extraits de \cite{carlub}, concernant la manipulation algorithmique des 
$\Sk_\nu$-modules. On entre dans le c\oe{}ur du sujet avec les numéros 
suivants. Dans les \S\S \ref{subsec:iteridee} et \ref{subsec:iterfrob}, 
on montre comment, en itérant le Frobenius, on parvient à construire un
module de Breuil-Kisin $\Mk_{\nu,D}$ défini non pas sur $\Sk_\nu$ mais
sur un anneau de séries formelles à coefficients dans $K_0[\sqrt[D] p]$
(pour un certain entier $D$ bien choisi).
Dans le \S \ref{subsec:liberte}, on explique comment, quitte à faire 
quelques petites modifications, on peut redescendre $\Mk_{\nu,D}$ en un 
authentique module de Breuil-Kisin défini (et libre) sur $\Sk_\nu$ et,
enfin, dans le \S \ref{subsec:etape2synthese}, on présente l'algorithme 
obtenu sous forme synthétique.

\subsubsection{L'algorithmique des $\Sk_\nu$-modules : rappels et 
compléments}
\label{subsec:algomax}

On rappelle dans ce numéro quelques algorithmes tirés de \cite{carlub} 
pour la manipulation des $\Sk_\nu$-modules. Ceux-ci seront constamment 
utilisés dans la suite.

D'un point de vue algorithmique, le fait que $\Sk$ ne soit pas un anneau 
principal est un souci majeur. En effet, sur un anneau $A$ non 
principal, il existe par définition des sous-modules de $A^d$ qui ne 
sont pas libres, et travailler sur machine avec des modules non libres
est nettement plus difficile.
Par chance, la structure de $\Sk$ est suffisamment simple (il s'agit 
d'un anneau local régulier de dimension $2$) pour que l'on puisse
quand même systématiquement se ramener à des modules libres quitte à
ajouter de temps en temps un morceau fini.
Plus précisément, si $\Mk$ est un sous-$\Sk$-module de type fini de
$(\E^+)^d$, on définit
$$\Max(\Mk) = \Big\{ \, x \in (\E^+)^d \,\, \big| \,\, \exists n \in 
\N, \, p^n x \in \Mk \text{ et } u^n x \in \Mk \, \Big\}.$$
Il est clair que le quotient $\Max(\Mk) / \Mk$ est annulé, à la fois,
par une puissance de $p$ et une puissance de $u$. Comme $\Mk$ est
finiment engendré, ceci implique que $\Max(\Mk) / \Mk$ est de 
longueur fini comme $\Sk$-module. En particulier, c'est un ensemble
fini si le corps résiduel $k$ est lui-même fini. Il fait ainsi sens
de dire que l'on passe de $\Mk$ à $\Max(\Mk)$ en \og ajoutant un
morceau fini \fg.

\begin{theo}[Iwasawa]
\label{theo:iwasawa}
Pour tout $\Sk$-module de type fini $\Mk \subset (\E^+)^d$, le module
$\Max(\Mk)$ est le plus petit sous-module libre de $(\E^+)^d$ qui
contient $\Mk$.
\end{theo}

Si l'on suppose que l'on sait manipuler dans leur intégralité les 
éléments de $\Sk$, le $\Max$ que l'on a défini ci-dessus a, de surcroît, 
la propriété intéressante de pouvoir être calculé explicitement ; un \og 
algorithme \fg, pour ce faire, est décrit dans le \S 3.3.1 de 
\cite{carlub}.
Pour le propos de cet article, on aura besoin de l'appliquer uniquement 
dans un cas particulier plus simple qui est celui du calcul de $\Max(\Mk 
+ x\Sk)$ où $\Mk$ est un sous-module libre de $(\E^+)^d$ de rang maximal 
donné par l'intermédiaire d'une base et $x$ est un vecteur de $\Sk^d$. 
Dans ce cas, si on note $v_0$ (resp. $\deg_0$) la valuation de Gauss 
(resp. le degré de Weierstrass) sur $\Sk$, l'algorithme fonctionne comme 
suit :

\medskip

\begin{algorithm}[H]
  \SetKwInOut{Input}{Entrée}
  \SetKwInOut{Output}{Sortie}

  \Input{une base $(e_1, \ldots, e_d)$ de $\Mk \subset (\E^+)^d$ 
    et un vecteur $X \in (\E^+)^d$}
  \Output{une base de $\Max(\Mk + X \Sk)$}
  \BlankLine
  $(x_1, \ldots, x_d)$ $\leftarrow$ coordonnées de $x$ sur la base
  $(e_1, \ldots, e_d)$\label{lgn:coord}\;
  \While{l'un des $x_i$ n'est pas dans $\Sk$}{
    $i \leftarrow$ indice tel que $v_0(x_i)$ est minimal\;
    $j \leftarrow$ indice distinct de $i$ tel que $v_0(x_j)$ est minimal\;
    $\delta \leftarrow \min(0,v_0(x_j)) - v_0(x_i);$\,
    $x_i \leftarrow p^\delta x_i;$\, 
    $e_i \leftarrow p^{-\delta} e_i$\;
    \If{$v_0(x_j) < 0$}{
      \lIf{$\deg_0(x_i) < \deg_0(x_j)$}{échanger $i$ et $j$}\;
      $(q,r) \leftarrow $ quotient et reste de la division euclidienne
      de $x_i$ par $x_j$\;
      $x_i \leftarrow r;$\,
      $e_i \leftarrow e_i + q e_j$\;
    }
  }
  \Return $(e_1, \ldots, e_d)$\;
\caption{\sc AjoutVecteur$(\Mk,x)$\label{algo:ajoutvecteur}}
\end{algorithm}

\medskip

\noindent
La démonstration de la correction de l'algorithme précédent n'est pas 
très difficile. On commence par remarquer qu'après chaque itération, la 
famille des $e_i$ reste toujours libre, que l'espace qu'elle engendre 
est inclus dans $\Max(\Mk + x \Sk)$ et, finalement, que la relation 
$x = x_1 e_1 + \cdots + x_d e_d$ continue d'être vérifiée.
Ainsi, lorsque l'on quitte la boucle, $x$ s'écrit comme combinaison 
linéaire à coefficients dans $\Sk$ des $e_i$ ; autrement dit, si on 
appelle $\Mk'$ l'espace engendré par ces $e_i$, on a $x \in \Mk'$
et donc $\Mk + x\Sk \subset \Mk'$. Par ailleurs, comme les $e_i$
forment une famille libre, le module $\Mk'$ est libre et l'inclusion
précédente donne alors $\Max(\Mk + x \Sk) \subset \Mk'$. L'égalité
en résulte puisque l'on a déjà dit que l'inclusion précédente était
vraie.

\bigskip

Dans la suite, on aura besoin de travailler non seulement avec des 
$\Sk$-modules, mais aussi avec $\Sk_\nu$-modules. Or, malheureusement, 
le théorème \ref{theo:iwasawa} ne vaut plus lorsque l'on remplace $\Sk$ 
par $\Sk_\nu$. Pour résoudre ce problème, on suppose que $\nu$ est un
nombre rationnel s'écrivant sous la forme $\nu = \frac a b$, on fixe
un élément $\varpi_b \in \bar K$ tel que $\varpi_b^b = p$ et on 
introduit l'anneau
$$\Sk_{\nu,b} = \Bigg\{ \, \sum_{i \in \N} a_i u^i \quad \Big| \quad
\begin{array}{l}
a_i \in K_0[\varpi_b] \smallskip \\
\val(a_i) + \nu i \geq 0, \, \forall i \smallskip 
\end{array} \, \Bigg\}.$$
Celui-ci s'identifie à l'anneau des séries formelles à coefficients
dans $\O_{K_0}[\varpi_b]$ --- qui est l'anneau des entiers de 
$K_0[\varpi_b]$ --- en la variable $\frac u {\varpi_b^a}$, d'où on
déduit que le théorème d'Iwasawa s'applique à cet anneau. Pour éviter
les confusions, on notera $\Max_{\nu,b}$ le foncteur $\Max$ 
correspondant ; en posant $\E^+_{\nu,b} = \Sk_{\nu,b}[1/p]$, on a donc
$$\textstyle \Max_{\nu,b}(\Mk_{\nu,b}) =
\Big\{ \, x \in (\E^+_{\nu,b})^d \,\, \big| \,\, \exists n \in
\N, \, p^n x \in \Mk_{\nu,b} \text{ et } (\frac u {\varpi_b^a})^n x 
\in \Mk_{\nu,b} \, \Big\}$$
pour un $\Sk_{\nu,b}$-module de type fini $\Mk_{\nu,b} \subset
(\E^+_{\nu,b})^d$.
Lorsqu'en outre $\Mk_{\nu,b}$ est défini sur $\Sk_\nu$, on peut 
démontrer (voir proposition 3.9 et lemme 3.18 de \cite{carlub}) que 
$\Max_{\nu,b}(\Mk_{\nu,b})$ admet une base formée des vecteurs de la 
forme $e_i = \varpi_b^{-b_i} B_i$ avec $b_i \in \N$ et $B_i \in 
(\E^+_\nu)^d$. Mieux encore, si l'on applique l'algorithme 
\ref{algo:ajoutvecteur} avec pour entrée des vecteurs $e_i$ et $x$ 
prenant la forme précédente, alors la sortie a également cette 
forme\footnote{On peut utiliser cette remarque pour démontrer 
que $\Max_{\nu,b}(\Mk_{\nu,b})$ prend la forme annoncée lorsque 
$\Mk_{\nu,b}$ est défini sur $\Sk_\nu$ ; il s'agit d'ailleurs de 
l'approche qui est suivie dans \cite{carlub}.}.

Enfin, dans le \S 3.3.4 de \cite{carlub}, il est présenté un algorithme 
qui, étant donné un $\Sk_{\nu,b}$-module libre $\Mk_{\nu,b}$ engendré 
par des vecteurs $e_i = \varpi_b^{-b_i} B_i$ avec $b_i \in \N$ et $B_i 
\in (\E^+_\nu)^d$, calcule l'intersection $\Mk_{\nu,b} \cap 
(\E^+_\nu)^b$. Dans le cas général, la description de cet algorithme 
n'est pas triviale\footnote{Elle fait appel à la théorie des fractions 
continues, au moins si l'on souhaite une version efficace.} mais elle
prend une forme particulièrement simple lorsque $\nu = \frac 1 b$ ;
en effet, dans ce cas, on montre que, si $q_i$ et $r_i$ désignent 
respectivement le quotient et le reste de la division euclidienne de
$-b_i$ par $b$, l'intersection $\Mk_{\nu,b} \cap (\E^+_\nu)^b$ est 
engendrée par les vecteurs
$$p^{q_i+1} B_i \quad \text{et} \quad p^{q_i} u^{r_i} B_i$$
pour $i$ variant de $1$ à $d$.

\bigskip

Tel qu'il est écrit, l'algorithme \ref{algo:ajoutvecteur} souffre d'un 
défaut majeur lié à la précision : lorsqu'un élément $x \in \E^+$ n'est 
connu qu'à précision $u$-adique fini, il n'est pas possible de calculer 
$v_0(x)$ ! En effet, si $x = \sum_{i \in \N} a_i u^i$ (avec $a_i \in 
K_0$) et si l'on ne connaît que les $a_i$ pour $i < N$, il est 
impossible de dire avec certitude quel est le minimum des $\val(a_i)$. 
Or, manifestement, savoir calculer les valuations d'éléments de $\E^+$ 
est essentiel au bon déroulement de l'algorithme 
\ref{algo:ajoutvecteur}.

Résoudre ce problème sans hypothèse supplémentaire paraît très délicat. 
Il est toutefois possible d'y apporter une réponse satisfaisante si l'on 
suppose en outre que $X \in p^{-c} \: \Mk$ pour un certain entier $c$ 
connu, ce qui revient encore à dire que tous les $x_i$ sont dans $p^{-c} 
\: \Sk$. Or, si on travaille sur machine avec la représentation PAG des 
éléments de $\Sk$ (voir \S \ref{subsec:repr}), un entier $c$ vérifiant 
la condition précédente est facile à obtenir : c'est le minimum des 
garanties des éléments $x_i$.
Toutefois, dans un souci de simplification, plutôt que de recourir aux 
informations contenues dans les représentations PAG, nous supposons 
simplement par la suite qu'un entier $c$ convenable est passé en 
argument à l'algorithme \ref{algo:ajoutvecteur}. Ceci ne portera pas à 
préjudice car, dans la situation qui nous occupe, un tel entier $c$ sera 
connu \emph{a priori} grâce au calcul du \S \ref{subsec:phiBK}.

Expliquons à présent comment utiliser cette hypothèse supplémentaire que 
nous venons de formuler. Cela ne coule pas de source car elle n'est, de 
fait, pas suffisante pour garantir que l'on puisse calculer les 
$v_0(x_i)$. Cependant, si les calculs se font à la précision $u$-adique 
$O(u^N)$, elle assure que l'on peut déterminer sans ambiguïté les 
$v_{c/N}(x_i)$ au moins si ceux-ci sont négatifs ou nuls : si un élément 
$x = \sum_{i \in \N} a_i u^i$ est dans $p^{-c} \: \Sk$, tous les 
coefficients $a_i$ ont une valuation $\geq -c$, d'où on tire que 
$\val(a_i) + \frac c N i \geq 0$ dès que $i \geq N$. Or, un examen 
rapide de l'algorithme \ref{algo:ajoutvecteur} montre que son 
comportement n'est pas modifié si l'on commet une erreur sur le calcul 
d'une valuation positive (dans la mesure, bien sûr, où l'on obtient 
quand même un résultat positif).
Ainsi, sous l'hypothèse supplémentaire que l'on a faite, afin de faire 
fonctionner l'algorithme \ref{algo:ajoutvecteur} avec des calculs à 
précision finie $u^N$, il suffit de remplacer partout $v_0$ par 
$v_{c/N}$ et $\E^+$ par $\E^+_{\nu,N}$. Le résultat renvoyé est une base 
de $\Max_{2c/N,N}(\Sk_{2c/N,N} \otimes_{\Sk_\nu} \Mk + x \: 
\Sk_{2c/N,N})$ si l'on souhaite un résultat exact. Si l'on peut se 
contenter d'un résultat approché à $u^N$, l'algorithme renvoie une base 
de $\Max_{c/N,N}(\Sk_{c/N,N} \otimes_{\Sk_\nu} \Mk + x \: \Sk_{c/N,N})$ 
avec $\frac c N$ à la place de $\frac{2c} N$ ; exactement, cela signifie 
que, quitte à ajouter des multiples de $u^N$ aux vecteurs renvoyés par 
l'algorithme, ceux-ci forment une base du dernier espace que l'on a 
écrit. Tout ceci s'étend au cas où l'on part d'un module $\Mk$ et d'un 
vecteur $x$ définis sur $\Sk_{\nu,b}$ et que l'on prend pour $N$ un 
multiple de $b$.

\paragraph{Utilisation de l'algorithme \ref{algo:ajoutvecteur} 
pour la suite}

Dans la suite de cet article, on manipulera essentiellement des modules 
sur les anneaux $\Sk_{\nu,b}$ munis d'un opérateur semi-linéaire $\phi$ 
et, plutôt que de calculer une base d'un tel module, il sera souvent 
plus intéressant, pour ce que l'on souhaite faire, de calculer l'action 
de $\phi$ sur ce module. 
Ainsi, plutôt qu'un algorithme qui prend en entrée un module $\Mk$, un 
vecteur $x$ et renvoie le $\Max$ du module engendré par $\Mk$ et $x$ 
(après une extension éventuelle des scalaires), on utilisera avec 
plaisir un algorithme qui prend en entrée une matrice \code{PhiBK} 
donnant l'action d'un opérateur $\phi$ agissant sur un module $\Mk$ et 
un vecteur $x$ et qui renvoie une nouvelle matrice \code{PhiBK} donnant 
cette fois-ci l'action de $\phi$ sur le $\Max$ du module engendré par 
$\Mk$ et $x$ (encore une fois, éventuellement, après extension des 
scalaires).

C'est exactement ce que fait l'algorithme \ref{algo:changebase}, 
présenté page \pageref{algo:changebase}. En plus de cela, il prend en 
compte toutes les remarques et améliorations qui ont été mises en 
lumière précédemment et met à profit la remarque sur la forme 
particulière $\varpi_N^{-b_i} B_i$ que prennent les vecteurs de base que 
l'on calcule pour faire en sorte de toujours travailler uniquement sur 
les anneaux $\E^+_\nu$, et non sur les $\E^+_{\nu,b}$.

\begin{algorithm}[h]
  \SetKwInOut{NB}{\it NB :}
  \SetKwInOut{Nota}{\it Notation :}
  \SetKwInOut{Input}{Entrée :}
  \SetKwInOut{Output}{Sortie :}
  \SetKwInOut{Void}{}

  \Nota{Si $V$ est un vecteur, on désigne par $V[i]$ sa $i$-ième composante}
  \Void{Si $A$ est une matrice, on désigne par :}
  \Void{\hspace{0.5cm} $A[i,j]$ son coefficient en $i$-ième ligne et $j$-colonne}
  \Void{\hspace{0.5cm} $A[i,\undef]$ sa $i$-ième ligne}
  \Void{\hspace{0.5cm} $A[\undef,j]$ sa $j$-ième colonne}

  \BlankLine

  \NB{Tous les calculs dans cet algorithme sont effectués à précision $u^N$}

  \BlankLine
  \BlankLine

  \Input{$\star$ un vecteur $b = (b_1 \, \ldots \, b_d)$ d'entiers relatifs}
  \BlankLine
  \Void{$\star$ une matrice $\code{PhiBK} \in M_d(\E^+_\nu)$ donnant 
    l'action, dans la base canonique $(e_i)_{1 \leq i \leq d}$,}
  \Void{\hphantom{$\star$} d'un opération semi-linéaire 
    $\phi : (\E^+_{\nu,N})^d \to (\E^+_{\nu,N})^d$}
  \BlankLine
  \Void{$\star$ un vecteur $x \in (\E^+_{\nu,N})^d$ donné sous la forme
    $x = \varpi_N^{-a} X$}
  \Void{\hphantom{$\star$} où $a$ est un entier et $X$ un vecteur 
    colonne à coefficients dans $\E^+_\nu$}
  \BlankLine
  \Void{$\star$ les paramètres habituels $c$, $N$ et $\nu$}

  \BlankLine
  \BlankLine

  \Output{$\star$ un vecteur $b' = (b'_1 \, \ldots \, b'_d)$ d'entiers relatifs}
  \BlankLine
  \Void{$\star$ la matrice $\code{PhiBK}'$ de $\phi$ écrite dans la base 
    des $e'_i$ où les $\varpi_N^{-b'_i} e'_i$}
  \Void{\hphantom{$\star$} forment une base de
    $\Max_{\nu + c/N,N}\big( \big< x, \, \varpi_N^{-b_i} e_i \, 
     (1 \leq i \leq d)\big>\big)$}

  \BlankLine
  \BlankLine

  $\nu' \leftarrow \nu + \frac c N$\;
  \While{il existe $i$ tel que $v_{\nu'}(X[i]) + b_i < a$}{
    $i \leftarrow$ indice tel que $v_{\nu'}(X[i]) + b_i$ est minimal$;$\,
    $v_i \leftarrow v_{\nu'}(X[i]) + b_i$\label{lgn:whilestart}\;
    $j \leftarrow$ indice distinct de $i$ tel que $v_{\nu'}(X[j]) + b_j$ est minimal$;$\,
    $v_j \leftarrow v_{\nu'}(X[j]) + b_j$\;
    $\delta \leftarrow \min(a,v_j) - v_i;$\,
    $b_i \leftarrow b_i + \delta$\label{lgn:updatebi}\;
    \If{$v_j < a$}{
      \lIf{$\deg_{\nu'}(X[i]) < \deg_{\nu'}(X[j])$}{échanger $i$ et $j$}\label{lgn:swapij}\;
      $(q,r) \leftarrow $ quotient et reste de la division euclidienne
      (dans $\E^+_\nu$) de $X[i]$ par $X[j]$\label{lgn:diveuc}\;
      $X[i] \leftarrow r$\label{lgn:updateXi}\;
      $\code{PhiBK}[\undef,i] \leftarrow \code{PhiBK}[\undef,i] + \phi(q) \cdot \code{PhiBK}[\undef,j]$\;
      $\code{PhiBK}[i,\undef] \leftarrow \code{PhiBK}[i,\undef] - q \cdot \code{PhiBK}[j,\undef]$\label{lgn:whileend}\;
    }
  }
  \Return $b,\code{PhiBK}$\;
\caption{{\sc ChangeBase$(b,\code{PhiBK},a,X,c,N,\nu)$}
\hfill {\it Hypothèse : $N\nu$ est entier}
\label{algo:changebase}}
\end{algorithm}

\subsubsection{Une idée simple}
\label{subsec:iteridee}

On revient à présent à la situation du début du \S \ref{subsec:etape2}. 
Cependant, dans cette partie introductive destinée simplement à 
présenter les idées sous-jacente à l'algorithme, on suppose pour 
simplifier que $\nu = 0$. On suppose donc donnés un $\Sk$-module 
$\Mk^\GeLa$, ainsi que des entiers $c$ et $c_0$ vérifiant 
$\phi(\Mk^\GeLa) \subset p^{-c} \cdot \Mk^\GeLa$ et $p^{c_0} \cdot 
\Mk^\GeLa \subset \Mk \subset \Mk^\GeLa$ pour un certain module de 
Breuil-Kisin $\Mk$.

\medskip

Puisque $\Mk$ doit être un module de Breuil-Kisin, il est particulier 
stable par $\phi$ et donc, s'il contient $\Mk^\GeLa$, il contient 
nécessairement aussi $\phi(\Mk^\GeLa)$ et donc par suite la somme 
$\Mk^\GeLa + \big<\phi(\Mk^\GeLa)\big>_\Sk$. Comme en outre $\Mk$ est 
libre, il contient nécessairement le $\Max$ de cette somme d'après le 
théorème \ref{theo:iwasawa}. En répétant l'argument, on trouve que $\Mk$ 
contient tous les sous-modules $\Mk^{(s)} \subset \E^+ \otimes_\Sk 
\Mk^\GeLa$ définis par récurrence par :
\begin{equation}
\label{eq:defMi}
\Mk^{(1)} = \Mk^\GeLa \quad ; \quad
\Mk^{(s+1)} = \Max\Big(\Mk^{(s)} + \big<\phi(\Mk^{(s)})\big>_\Sk\Big).
\end{equation}
Comme tous les $\Mk^{(s)}$ sont inclus dans $p^{-c_0} \: \Mk^\GeLa$ qui 
est libre de rang fini sur l'anneau $\Sk$ qui est noethérien, la suite 
des $\Mk^{(s)}$, qui est manifestement croissante, est stationnaire. 
Soit $\Mk^{(\infty)}$ sa limite ; à l'évidence, c'est un 
sous-$\Sk$-module libre de $\Dk$ qui est stable par $\phi$. La 
proposition suivante montre que cela suffit à en faire un module de 
Breuil-Kisin.

\begin{prop}
\label{prop:Eurauto}
Soient $\Mk$ un module de Breuil-Kisin de hauteur $\leq r$ sur $\Sk$ et
$\Dk = \E^+ \otimes_\Sk \Mk$. Soit $\Mk'$ un sous-$\Sk$-module de 
$\Dk$ libre de rang fini, stable par $\phi$ tel que $\Dk = \E^+
\otimes_\Sk \Mk'$.
Alors $\Mk'$ est un module de Breuil-Kisin de hauteur $\leq r$.
\end{prop}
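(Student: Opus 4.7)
L'objectif est de montrer que $E(u)^r \Mk' \subset \langle \phi(\Mk') \rangle_\Sk$, ou de mani�re �quivalente, que le quotient $Q := \Mk' / \langle \phi(\Mk') \rangle_\Sk$ est annul� par $E(u)^r$. Pour cela, on fixera des $\Sk$-bases respectives de $\Mk$ et de $\Mk'$, d'o� des matrices $G, G' \in M_d(\Sk)$ pour le Frobenius ainsi qu'une matrice de changement de base $P \in \GL_d(\E^+)$ v�rifiant $G' = P^{-1} G \phi(P)$. L'hypoth�se que $\Mk$ est de hauteur $\leq r$ fournit $H \in M_d(\Sk)$ avec $HG = E(u)^r I$, d'o� l'identit� $\det G \cdot \det H = E(u)^{rd}$. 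Comme $\Sk$ est un anneau factoriel et que $E(u)$ y est irr�ductible (puisque le quotient $\Sk/(E(u)) \simeq \O_K$ est int�gre), on en d�duit $\det G = \beta \cdot E(u)^a$ avec $\beta \in \Sk^\times$ et $0 \leq a \leq rd$.

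La premi�re �tape cl� consistera � �tablir que $\det G'$ a la m�me forme dans $\Sk$, � savoir $\det G' = \gamma \cdot E(u)^a$ avec $\gamma \in \Sk^\times$. Ceci d�coulera d'une description explicite des unit�s de $\E^+$ : un examen direct au moyen de la valuation de Gauss $v_0$ montre que $(\E^+)^\times = p^\Z \cdot \Sk^\times$, d'o� $\det P = p^n \alpha$ pour un certain $n \in \Z$ et $\alpha \in \Sk^\times$, et donc $\phi(\det P)/\det P = \phi(\alpha)/\alpha \in \Sk^\times$. L'identit� $\det G' = (\phi(\det P)/\det P) \cdot \det G$ fournit alors le r�sultat voulu. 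Ainsi $\det G'$ ne pr�sente aucun facteur $p$ dans $\Sk$, ce qui entra�ne $\mathrm{Supp}(Q) \subset V(\det G') \subset V(E(u))$.

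Le c\oe{}ur de la preuve repose ensuite sur un argument d'alg�bre commutative locale. La suite exacte $0 \to \langle \phi(\Mk') \rangle_\Sk \to \Mk' \to Q \to 0$ a pour termes extr�mes des $\Sk$-modules libres de m�me rang $d$ (l'injectivit� r�sulte de $\det G' \neq 0$), si bien que $Q$ admet une r�solution libre de longueur $1$. Comme $\Sk$ est local r�gulier de dimension $2$, la formule d'Auslander--Buchsbaum appliqu�e aux localis�s $\Sk_P$ impose que tout id�al premier associ� de $Q$ est de hauteur au plus $1$ ; puisque $Q$ est de torsion, ces premiers associ�s sont en fait de hauteur exactement $1$. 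Combin� avec l'inclusion $\mathrm{Supp}(Q) \subset V(E(u))$ et l'irr�ductibilit� de $E(u)$, on obtient $\mathrm{Ass}(Q) \subset \{(E(u))\}$, et l'application canonique de localisation $Q \to Q_{(E(u))}$ est alors injective.

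Il restera enfin � �tablir que $Q_{(E(u))}$ est annul� par $E(u)^r$. Puisque $p \notin (E(u))$, l'inclusion $\E^+ \subset \Sk_{(E(u))}$ rend $P$ et $\phi(P)$ inversibles dans $\GL_d(\Sk_{(E(u))})$ ; les matrices $G$ et $G'$ y deviennent �quivalentes au sens de Smith et partagent donc les m�mes diviseurs �l�mentaires. La relation $HG = E(u)^r I$ force alors ces diviseurs � �tre des puissances $E(u)^{r_i}$ avec $r_i \leq r$, ce qui donne l'annulation de $Q_{(E(u))}$ par $E(u)^r$, puis celle de $Q$ gr�ce � l'injection pr�c�dente. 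L'obstacle principal r�side pr�cis�ment dans la n�cessit� de passer par ce point g�n�rique : l'identit� na�ve $H' = \phi(P)^{-1} H P$ ne fournit qu'un inverse � coefficients dans $\E^+$, insuffisant pour conclure directement dans $M_d(\Sk)$.
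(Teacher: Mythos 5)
Your argument is correct and follows a route genuinely different from the paper's. The paper's proof is elementary and two-staged: it treats rank one directly and passes to $\Lambda^d$, which yields only the intermediate bound height $\leq dr$; it then writes $E(u)^r x = \sum_i a_i\,\phi(e'_i)$ with $a_i\in\E^+$, uses the $dr$-bound to deduce $E(u)^{r(d-1)} a_i \in\Sk$, and concludes $a_i\in\Sk$ from $v_0(E(u))=0$. You instead localize at the generic point of $V(E(u))$: the length-one free resolution of $Q = \Mk'/\langle\phi(\Mk')\rangle_\Sk$ combined with Auslander--Buchsbaum bounds the height of every associated prime of $Q$ by one, your determinant computation (resting on the same fact the paper also uses, namely $(\E^+)^\times = p^\Z\,\Sk^\times$) confines $\mathrm{Supp}(Q)$ to $V(E(u))$, and therefore $Q\hookrightarrow Q_{(E(u))}$; over the DVR $\Sk_{(E(u))}$, the relation $G' = P^{-1}G\,\phi(P)$ with $P,\phi(P)\in\GL_d(\E^+)\subset\GL_d(\Sk_{(E(u))})$ transfers $HG=E(u)^r$ to $G'$ and gives the bound $r$ directly. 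Your route costs invoking Auslander--Buchsbaum but obtains the sharp bound in one stroke and makes transparent that the only obstruction lives at the height-one prime $(E(u))$; in particular, it sidesteps the ``improve $dr$ to $r$'' step, which is precisely the part of the paper's proof the authors report being unable to adapt to the $\Sk_\nu$-setting (see the footnote in \S\ref{subsec:liberte}).
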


\begin{proof}
On suppose pour commencer que $\Mk$ est de rang $1$ ; il en est alors de 
même de $\Mk'$. On note $x$ (resp. $x'$) une base de $\Mk$ (resp. de 
$\Mk'$) sur $\Sk$ et on note $s$ (resp. $s'$) l'unique élément de $\Sk$ 
tel que $\phi(x) = sx$ (resp. $\phi(x') = s'x'$). On introduit également 
l'élément $a \in \E^+$ défini par l'égalité $x' = ax$. Il est inversible 
dans $\E^+$ (puisque $\E^+ \otimes_\Sk \Mk = \E^+ \otimes_\Sk \Mk' = 
\Dk$) et s'écrit donc sous la forme $a = p^n b$ avec $n \in \Z$ et $b$
inversible dans $\Sk$.
Par ailleurs, de $\phi(x') = \phi(ax) = \phi(a) \phi(x) = \phi(a) s x$, 
on déduit la relation $s' = \frac{\phi(a)} a \: s = \frac{\phi(b)} b \: 
s$. Or, comme $\Mk$ est un module de Breuil-Kisin de hauteur $\leq r$, il est
clair que $s$ divise $E(u)^r$. Ainsi $s'$ divise lui aussi $E(u)^r$, ce
qui implique réciproquement que $\Mk'$ est un module de Breuil-Kisin de hauteur
$\leq r$.

Si maintenant $\Dk$ est de rang $d$, on considère sa $d$-ième puissance 
extérieure $\Lambda^d \Dk$. À l'intérieur de ce $\E^+$-module de rang 
$1$ vivent les réseaux $\Lambda^d \Mk$ et $\Lambda^d \Mk'$. Le premier 
est un module de Breuil-Kisin de hauteur $\leq dr$, d'où il résulte, \emph{via} 
le premier point, qu'il en est de même du second. Cela implique 
facilement que $\Mk'$, lui-même, est un module de Breuil-Kisin de hauteur $\leq 
dr$. Il faut à présent montrer qu'il est en fait de hauteur $\leq r$. 
Pour cela, on fixe une $\Sk$-base $(e'_1, \ldots, e'_d)$ de $\Mk'$ et 
on considère un élément $x \in \Mk'$. Puisque $\Mk$ est de hauteur $\leq 
r$ et que les $e'_i$ forment une $\E^+$-base de $\Dk = \Mk[1/p]$, on 
peut écrire le produit $E(u)^r x$ sous la forme :
$$E(u)^r x = a_1 \phi(e'_1) + a_2 \phi(e'_2) + \cdots + a_d \phi(e'_d)$$
pour des éléments $a_i \in \E^+$. Pour conclure, il suffit de démontrer
que tous les $a_i$ sont dans $\Sk$. Or, puisque l'on a prouvé que $\Mk'$ 
est de hauteur $\leq dr$, on sait que $E(u)^{rd} x$ appartient au 
$\Sk$-module engendré par les $\phi(e'_1)$, ce qui revient à dire que 
tous les $E(u)^{r(d-1)} a_i$ appartiennent à $\Sk$. En prenant les
valuations de Gauss, on trouve $r(d-1) v_0(E(u)) + v_0(a_i) \geq 0$,
soit encore $v_0(a_i) \geq 0$ car $v_0(E(u)) = 0$. Ainsi $a_i \in \Sk$
pour tout $i$ et on a bien démontré ce qui avait été annoncé.
\end{proof}

À ce stade, on a compris ce que l'on a à faire pour calculer un module 
de Breuil-Kisin $\Mk$ à l'intérieur de $\Dk$ : on calcule les 
$\Mk^{(n)}$ définis par la formule récurrence \eqref{eq:defMi} jusqu'à 
ce que celle-ci stationne et, à ce moment, le module $\Mk^{(\infty)}$ 
obtenu est un module de Breuil-Kisin comme souhaité. On peut en outre 
borner explicitement le rang à partir duquel la suite des $\Mk^{(n)}$ 
stationne, comme le précise le lemme suivant.

\begin{lemme}
\label{lem:long0}
Sous les hypothèses précédentes, on a $\Mk^{(\infty)} = \Mk^{(d)}$.
\end{lemme}

\begin{proof}
Soit $\O_\E$ le complété $p$-adique de $\Sk[1/u]$. La valuation de Gauss 
$v_0$ s'étend à $\O_\E$ et fait de ce dernier un anneau de valuation 
discrète complet de corps résiduel $k((u))$. De même, il est clair 
que le Frobenius $\phi$ se prolonge en un endomorphisme de $\O_\E$. 
Par ailleurs, il suit 
du théorème 3.12 (appliqué avec $\nu = 0$) de \cite{carlub} qu'en posant 
$M^{(n)} = \O_\E \otimes_\Sk \Mk^{(n)}$, on a les deux propriétés 
suivantes valables pour tout $n \geq 0$ :
\begin{enumerate}[(i)]
\item l'égalité $\Mk^{(n)} = \Mk^{(n+1)}$ est équivalente à $M^{(n)} = 
M^{(n+1)}$ ;
\item $\Mk^{(n+1)} = \Mk^{(n)} + (\phi \otimes \phi)(\Mk^{(n)})$.
\end{enumerate}
Pour tout entier $n$, notons $f_n : \frac{M^{(n)}}{p\:M^{(n)}} \to 
\frac{M^{(\infty)}}{p\:M^{(\infty)}}$ l'application déduite de 
l'inclusion $M^{(n)} \subset M^{(\infty)}$ ; il s'agit d'une application 
$k((u))$-linéaire entre $k((u))$-espaces vectoriels de dimension $d$. De 
la suite d'inclusions $M^{(n)} \subset M^{(n+1)} \subset M^{(\infty)}$,
on déduit que $f_n$ se factorise par $f_{n+1}$ et, par suite, que l'image
de $f_{n+1}$ contient celle de $f_n$. On a ainsi la suite d'inclusions :
$$\im f_0 \subset \im f_1 \subset \cdots \subset \im f_{d+1}.$$
Comme tous les espaces ci-dessus sont des sous-$k((u)$-espaces vectoriels
de $\frac{M^{(\infty)}}{p\:M^{(\infty)}}$ qui est de dimension $d$, il
existe nécessairement un entier $\ell \leq d$ tel que $\im f_\ell = 
\im f_{\ell+1}$. En revenant aux définitions, cela signifie que, pour 
cet entier $\ell$, on a $M^{(\ell+1)} \subset M^{(\ell)} + p 
M^{(\infty)}$. En appliquant $\phi \otimes \phi$ à cette égalité puis
en sommant, on obtient $M^{(\ell+2)} \subset M^{(\ell + 1)} + p 
M^{(\infty)} \subset M^{(\ell)} + p M^{(\infty)}$. En répétant 
l'argument, on démontre par récurrence que, pour tout entier $n$, 
l'inclusion $M^{(\ell+n)} \subset M^{(\ell)} + p M^{(\infty)}$ est 
vérifiée. En passant à la limite, on en déduit que $M^{(\infty)} \subset 
M^{(\ell)} + p M^{(\infty)}$. Ceci implique que $M^{(\infty)} \subset
M^{(\ell)}$ puis que ces deux espaces coïncident étant donné que 
l'inclusion réciproque est vraie par construction. Comme $\ell \leq 
d$, on en déduit l'énoncé du lemme.
\end{proof}

\begin{rem}
\label{rem:fastiter}
Plutôt que de calculer les $\Mk^{(n)}$ un par un à l'aide de la formule 
itérative \eqref{eq:defMi}, il est bien plus efficace de procéder 
comme suit. On pose $\phi_0 = \phi$, $\Mk_{(0)} = \Mk^\GeLa$ et on
définit par récurrence :
\begin{equation}
\label{eq:defMi2}
\phi_{m+1} = \phi_m \circ \phi_m \quad ; \quad
\Mk_{(m+1)} = \Max\Big(\Mk_{(m)} + \big<\phi_m(\Mk_{(m)})\big>_\Sk\Big).
\end{equation}
Il est facile de montrer que $\Mk_{(m)} = \Mk^{(2^m)}$ pour tout entier 
$i \geq 0$. Ainsi comme on sait par le lemme \ref{lem:long0} que la 
suite des $\Mk^{(n)}$ stationne au plus après $d$ étapes, la limite sera 
atteinte au bout de $\log_2 d$ étapes pour la suite des $\Mk_{(m)}$.
Ceci constitue un gain important pour la complexité. 
\end{rem}

\paragraph{Les complications}

Les principales complications viennent du fait que, malheureusement, on 
ne peut pas faire comme si tout se passait sur $\Sk$. En effet,
\begin{enumerate}[(i)]
\item on ne dispose en général pas du module $\Mk^\GeLa$ --- qui, 
rappelons-le, sert de point de départ pour les itérations que l'on 
souhaite faire --- mais uniquement d'un $\Mk^\GeLa_\nu$ pour un certain 
$\nu > 0$ (que l'on peut, certes, choisir), et
\item comme cela a été expliqué au \S \ref{subsec:algomax}, on 
ne peut pas calculer un $\Max$ en restant à $\nu$ constant à cause des 
problèmes de précision.
\end{enumerate}
On est ainsi amené à reprendre tout ce qui précède en essayant de
remplacer partour $\Sk$ par $\Sk_\nu$. Or, déjà sans aller très loin,
on voit apparaître de nombreux problèmes ; en vrac
\begin{itemize}
\item la notion de $\Max$ n'a pas été définie sur les anneaux $\Sk_\nu$ : 
il va donc falloir, comme cela est expliqué au \S \ref{subsec:algomax} 
travailler sur des extensions $\Sk_{\nu,b}$ puis redescendre à 
$\Sk_\nu$ en prenant des intersections ;
\item il n'est pas vrai, en général, que l'intersection de $\Dk_\nu$
avec un $\Sk_{\nu,b}$-module libre est libre sur $\Sk_\nu$ ; il faudra
donc travailler davantage pour obtenir un module de Breuil-Kisin ;
\item afin de pouvoir appliquer le théorème \ref{theo:surconv}, il
faut que la pente à laquelle on aboutit finalement reste 
$< \frac{p-1} {per}$ ; comme la pente est modifiée après chaque
calcul de $\Max$, il faudra faire attention à contrôler le nombre
de ces calculs.
\end{itemize}
Ces problèmes sont résolus dans les numéros suivants. Précisément, dans 
le \S \ref{subsec:iterfrob} ci-après, on explique comment à partir de la 
donnée de $\Mk^\GeLa_\nu$ (pour un $\nu > 0$) et de l'entier $c_\nu$ 
correspondant, on peut construire, en itérant le Frobenius, un 
$\Sk_{\nu',b}$-module libre $\Mk_{\nu',b}$ stable par $\phi$ (pour un 
certain $b$ et un certain $\nu' > \nu$). L'intersection $\Sk_{\nu',b} 
\cap (\E^+_{\nu'} \otimes_{\Sk_\nu} \Mk^\GeLa_\nu)$ --- que l'on sait 
calculer (voir \S \ref{subsec:algomax}) --- apparaît alors comme un 
candidat raisonnable pour être un module de Breuil-Kisin $\Mk_{\nu'}$. 
Hélas, en général, cela ne fonctionne pas tel quel car rien n'assure 
qu'elle est libre sur $\Sk_{\nu'}$. Nous résolvons ce problème, ainsi 
que quelques autres, dans le \S \ref{subsec:liberte}.

\subsubsection{Itération du Frobenius}
\label{subsec:iterfrob}

À partir de maintenant, on oublie le cas d'école \og $\nu = 0$ \fg\ et on
se place à nouveau dans le cas \og $\nu > 0$ \fg : précisément, on 
suppose donnés un $\Sk_\nu$-module $\Mk_\nu^\GeLa$ muni d'une
application semi-linéaire $\phi : \Mk_\nu^\GeLa \to p^{-c} \cdot 
\Mk_\nu^\GeLa$ pour un certain entier $c = O(dr \cdot \log_p^2(\frac 1 
\nu))$ connu. On suppose en outre que l'on connaît un entier $c_\nu$ 
pour lequel on a la garantie qu'il existe un module de Breuil-Kisin de 
hauteur $\leq r$ compris entre $p^{-c_\nu} \: \Mk^\GeLa_\nu$ et 
$\Mk^\GeLa_\nu$. On rappelle que l'application $\phi$ dont il a été
question ci-dessus est donnée par l'intermédiaire de sa matrice (dans
une certaine base) notée \code{PhiBK} et que celle-ci est connue
modulo $u^N$ pour un certain entier $N$ que l'on peut choisir comme
on le souhaite.

À partir de maintenant, on notera $c_\GeLa$ à la place de $c_\nu$ ; cela 
permettra, malgré les apparences, d'éviter quelques confusions : en 
effet, dans la suite, on sera amené à considérer de multiples autres 
pentes que $\nu$, alors que la constance $c_\GeLa$, elle, sera toujours 
la même.

\paragraph{La suite des $\Mk^{(n)}$ revisitée}

Pour pouvoir appliquer les algorithmes rappelés au \S
\ref{subsec:algomax} à la situation 
présente, on introduit deux nouveaux paramètres entiers $D$ et $N$, que 
l'on choisira judicieusement plus tard. Conformément aux notations du \S 
\ref{subsec:algomax}, l'entier $N$ désignera la précision $u$-adique à 
laquelle les calculs seront effectués. Pour le moment, on conserve une 
certaine liberté sur le choix de $D$ et $N$ et on impose seulement que 
$N$ soit un dénominateur de $\nu$ (\emph{i.e.} que $N\nu$ soit entier), 
que $D$ soit un dénominateur de $\nu'$ et que $D$ divise $N$. On va 
adapter la définition de la suite des $\Mk^{(n)}$ afin que l'action du 
Frobenius sur ceux-ci puisse être calculée à l'aide de l'algorithme 
\ref{algo:changebase}. En fait, plutôt qu'une seule suite, on va en 
définir trois : $(\Mk_{\nu_n,N}^{(n)})$ avec $\nu_n = \nu + \frac {2nc} 
N$, $(\Mk_{\nu_n,D}^{(n)})$ et $(\Mk_{\nu',D}^{(n)})$. Dans l'écriture 
précédente, l'exposant $(n)$ correspond, bien entendu, au rang dans la 
suite tandis que les indices indiquent sur quel anneau le module 
correspondant est défini. On notera en particulier que $N$ est un 
dénominateur commun à tous les $\nu_n$ ; les anneaux $\Sk_{\nu_n,N}$ 
sont donc des anneaux de séries formelles en une variable, au même titre 
d'ailleurs que l'anneau $\Sk_{\nu',D}$ d'après l'hypothèse faite sur 
$D$. Pour l'initialisation, on pose :
$$\Mk_{\nu,N}^{(0)} = \Sk_{\nu,N} \otimes_{\Sk_\nu} \Mk^\GeLa_\nu 
\quad ; \quad
\Mk_{\nu,D}^{(0)} = \Sk_{\nu,D} \otimes_{\Sk_\nu} \Mk^\GeLa_\nu
\quad ; \quad
\Mk_{\nu',D}^{(0)} = \Sk_{\nu',D} \otimes_{\Sk_\nu} \Mk^\GeLa_\nu.$$
Si on suppose maintenant que les suites précédentes sont construites 
jusqu'au rang $n$, on distingue les deux cas suivants :
\begin{itemize}
\item soit le module $\Mk_{\nu',D}^{(n)}$ est stable par $\phi$ et, à ce 
moment, on arrête le processus,
\item soit il existe un vecteur $x_n\in \phi(\Mk_{\nu_n,D}^{(n)})$, $x_n 
\not\in \Mk_{\nu',D}^{(n)}$ et on définit alors 
\begin{eqnarray*}
\Mk_{\nu_{n+1},N}^{(n+1)} & = & \Max_{\nu_{n+1},N} \big(
\Sk_{\nu_{n+1}, N} \otimes_{\Sk_{\nu_n,N}} \Mk_{\nu_n,N}^{(n)} + 
x_n \: \Sk_{\nu_{n+1}, N}\big) \\
\Mk_{\nu_{n+1},D}^{(n+1)} & = & \Mk_{\nu_{n+1},N}^{(n+1)} \cap \big(
\E^+_{\nu_{n+1},D} \otimes_{\Sk_\nu} \Mk^\GeLa_\nu\big) \\
\Mk_{\nu',D}^{(n+1)} & = & \big(\Sk_{\nu',N} \otimes_{\Sk_{\nu_{n+1},N}}
\Mk_{\nu_{n+1},N}^{(n+1)} \big) \cap 
\big( \E^+_{\nu',D} \otimes_{\Sk_\nu} \Mk^\GeLa_\nu\big).
\end{eqnarray*}
\end{itemize}

\smallskip

\noindent On remarquera que, dans ce qui précède, on a fait l'hypothèse 
implicite que $\nu_{n+1} \leq \nu'$ ; en effet, dans le cas contraire, 
les produits tensoriels que l'on a écrits n'ont aucun sens. Dans la 
suite, on choisira les paramètres $D$ et $N$ de sorte que le processus 
s'arrête toujours avant que cette inégalité ne soit violée. On notera 
également qu'il n'est \emph{a priori} pas évident que les deux cas que 
l'on a séparés recouvrent tous les possibles ; en effet, il se pourrait 
très bien, à première vue, que $\phi (\Mk_{\nu_n,D}^{(n)})$ soit inclus 
dans $\Mk_{\nu',D}^{(n)}$ sans pour autant que 
$\phi(\Mk_{\nu',D}^{(n)})$ le soit. Toutefois, d'après le lemme 
\ref{lem:basenupD} ci-après, cela ne peut se produire.

Avec ce qui a été rappelé au \S \ref{subsec:algomax}, une récurrence 
immédiate montre que, tant qu'il est bien défini, le module 
$\Mk_{\nu_n,N}^{(n)}$ admet une base sur $\Sk_{\nu_n,N}$ composée de 
vecteurs de la forme $\varpi_N^{-b_i^{(n)}} B_i^{(n)}$ où les 
$b_i^{(n)}$ sont des entiers, les $B_i^{(n)}$ appartiennent à 
$\E^+_{\nu_n} \otimes_{\Sk_\nu} \Mk^\GeLa_\nu$ et où on rappelle que 
$\varpi_N \in \bar K$ est un élément vérifiant $\varpi_N^N = p$.

\begin{lemme}
\label{lem:basenupD}
On conserve les notations précédentes. Soit $n$ est un entier pour lequel 
$\Mk_{\nu_n,N}^{(n)}$ est défini. Alors le module $\Mk_{\nu',D}^{(n)}$ est 
libre sur $\Sk_{\nu',D}$ et admet pour base la famille des 
$\varpi_D^{-a_i^{(n)}} B_i^{(n)}$ ($1 \leq i \leq d$) où $a_i^{(n)}$ 
désigne la partie entière de $\frac D N \: b_i^{(n)}$.

En particulier, $\Mk_{\nu_n,D}^{(n)}$ engendre $\Mk_{\nu',D}^{(n)}$ comme 
$\Sk_{\nu',D}$-module.
\end{lemme}

\begin{proof}
Pour la première assertion, il s'agit de montrer que si $b$ est un 
entier, alors $\varpi_N^{-b} \Sk_{\nu',N} \cap \E^+_{\nu',D} = 
\varpi_D^{-a} \Sk_{\nu',D}$ où $a$ est la partie entière de $\frac{bD} 
N$. Or, si un élément $x = \sum_{i \in \N} a_i u^i$ appartient à cette 
intersection, on doit avoir $\val(a_i) \in \frac 1 D \Z$ et $\val(a_i) 
\geq \nu' i - \frac b N$ pour tout $i$. Le produit $D \cdot \val(a_i)$ 
doit alors être un entier $\geq D \nu' i - \frac {bD} N$. Comme on a 
supposé que $D$ est un dénominateur de $\nu'$, on en déduit que $D \nu' 
i$ est entier, puis que $D \cdot \val(a_i) \geq D\nu' i - a$. Cela 
signifie exactement que $x \in \varpi_D^{-a} \Sk_{\nu',D}$.

La seconde assertion est maintenant claire puisque, d'une part, 
$\Mk_{\nu_n,D}^{(n)} \subset \Mk_{\nu',D}^{(n)}$ et, d'autre part, tous 
les $\varpi_D^{b_i^{(n)}} B_i^{(n)}$ sont éléments de 
$\Mk_{\nu_n,D}^{(n)}$.
\end{proof}

\begin{lemme}
\label{lem:long}
Le processus itératif que l'on a défini précédemment prend fin au plus
au bout de $c_\GeLa \cdot dD$ étapes.
\end{lemme}

\begin{rem}
La borne ci-dessus peut paraître bien pâle par rapport à ce que nous
avions prouvé dans le cas d'école \og $\nu = 0$ \fg\ (voir lemme
\ref{lem:long0}). Toutefois, nous n'avons pas réussi à adapter la
démonstration de ce lemme à cette nouvelle situation ; de nombreux
problèmes se posent et, en particulier, celui de la non-stabilité 
de $\Sk_{\nu',D}[(\frac u {\varpi_D^{D \nu'}})^{-1}]$ par $\phi$.
Pire encore, des simulations numériques montrent qu'il n'est pas
vrai que, dans le cas $\nu > 0$, le processus prend nécessairement
fin en moins de $d$ étapes ; toutefois, la borne donnée par le 
lemme \ref{lem:long} ne nous paraît pas optimale.
\end{rem}

\begin{proof}
Soit $\ell$ le plus grand entier tel que $\Mk_{\nu',D}^{(\ell)}$ soit 
défini. On veut démontrer que $\ell \leq c_\GeLa \cdot dD$. Pour cela, 
on va mettre à profit la suite d'inclusions suivante :
$$\Mk_{\nu',D}^{(0)} \subsetneq \Mk_{\nu',D}^{(1)} \subsetneq 
\Mk_{\nu',D}^{(2)} \subsetneq \cdots \subsetneq \cdots \subsetneq 
\Mk_{\nu',D}^{(\ell)} \subset p^{-c_\GeLa} \: \Mk_{\nu',D}^{(0)}$$
qui, après passage au déterminant, donne une suite d'inclusions analogue
sur les $\Lambda^d \: \Mk_{\nu',D}^{(n)}$ sauf que le coefficient $p^{-c_\GeLa}$
est remplacé par $p^{-d c_\GeLa}$ dans le dernier module.
Soit $x$ une base de $p^{-dc_\GeLa} \: \Lambda^d \: \Mk_{\nu',D}^{(0)}$ sur 
$\Sk_{\nu',D}$, et soient $s_1, \ldots, s_\ell$ des éléments de 
$\Sk_{\nu',D}$ tels que $s_n x$ soit une base de $\Lambda^d \:
\Mk_{\nu'}^{(n)}$. On a alors $s_0 = p^{c_\GeLa \cdot d} = \varpi_D^{c_\GeLa \cdot dD}$ et 
$s_{n+1}$ divise $s_n$ pour tout $n \in \{0, \ldots, \ell-1\}$. Or 
$\varpi_D$ engendre un idéal premier dans $\Sk_{\nu',D}$. Il en résulte 
qu'à multiplication par des éléments inversibles près, tous les $s_n$ 
sont des puissances de $\varpi_D$. Les exposants qui apparaissent 
forment une suite d'entiers strictement décroissante qui commence à 
$c_\GeLa \cdot dD$ et se termine à $0$. Clairement, sa longueur est donc majorée par 
$c_\GeLa \cdot dD + 1$.
\end{proof}

Ainsi, si l'on choisit $N$ supérieur ou égal à $\frac{2c \:c_\GeLa \cdot 
dD}{\nu'-\nu}$, on a $\nu_n \leq \nu'$ pour tout $n \leq c_\GeLa \cdot dD$, 
et le lemme ci-dessus implique que le processus itératif que l'on a 
défini précédemment ne peut s'arrêter en raison d'une violation de 
l'inégalité $\nu_n \leq \nu'$. Autrement dit, il s'arrête nécessairement 
lorsque l'on a trouvé un $\Mk_{\nu',D}^{(n)}$ stable par $\phi$.

\paragraph{L'algorithme proprement dit}

Le procédé itératif précédent peut, sans problème, être transformé en un 
véritable algorithme qui calcule la matrice donnant l'action de $\phi$ 
sur $\Mk_{\nu_n,N}^{(n)}$ et s'arrête lorsque $\Mk_{\nu',D}^{(n)}$ est
stable par $\phi$. En effet, du fait que $\Mk_{\nu,N}^{(0)}$ est stable 
par $p^{-c} \cdot \phi$, on déduit que l'on peut utiliser l'algorithme 
\ref{algo:changebase} pour calculer l'action de $\phi$ sur 
$\Mk_{\nu_1,N}^{(1)}$ et, en examinant cet algorithme, on montre que
$\Mk_{\nu_1,N}^{(1)}$ est, lui aussi, stable par $p^{-c} \cdot \phi$. On
peut ainsi itérer le procédé et calculer, comme annoncé, l'action
de $\phi$ sur chacun des $\Mk_{\nu_n,D}^{(n)}$. Comme en outre, le
processus précédent prend fin au bout d'au plus $c_\GeLa \cdot dD$ 
étapes (par le lemme \ref{lem:long}), la pente $\nu$ se dépasse jamais
la valeur critique $\nu'$. 

Par ailleurs, pour tester la stabilité de $\Mk_{\nu',D}^{(n)}$, on peut 
procéder comme suit : (1)~à partir du lemme \ref{lem:basenupD} et la 
matrice donnant l'action de $\phi$ sur $\Mk_{\nu_n,N}^{(n)}$ calculée 
précedemment, on détermine la matrice de l'action de $\phi$ sur 
$\Mk_{\nu',D}^{(n)}$ et (2)~on vérifie que cette matrice est à 
coefficients dans $\Sk_{\nu'}$ ce qu'il est possible de tester en ne
regardant que les termes en $u^i$ pour $i < N$ étant donné que $\nu' 
\geq \nu_n + \frac c N$\footnote{Cela pourrait éventuellement ne pas se 
produire si $n = c_\GeLa \cdot dD$. Mais, dans ce cas\footnotemark, la 
borne du lemme \ref{lem:long} apporte la garantie que 
$\Mk^{(n)}_{\nu',D}$ est automatiquement stable par $\phi$ et on peut 
donc se dispenser de faire le test.},

\footnotetext{En fait, ce cas ne se produit jamais. En effet, toujours 
d'après le lemme \ref{lem:long}, si l'on arrive à calculer $\Mk^{(n)} 
_{\nu',D}$ pour $n = cdD$, nécessairement $\Mk^{(n)}_{\nu'} = p^{-c} \: 
\Mk^{(0)}_{\nu',D}$ et celui-ci est stable par $\phi$. Ainsi 
$\Mk^{(0)}_{\nu,D'}$ serait lui aussi stable par $\phi$ et on n'aurait 
dû s'arrêter avant même la première itération.}

L'algorithme \ref{algo:iterfrob} résume, de façon concise ce que l'on 
vient de dire.

\begin{algorithm}
  \SetKwInOut{Input}{Entrée :}
  \SetKwInOut{Output}{Sortie :}
  \SetKwInOut{Void}{}

  \Input{$\star$ des nombres rationnels $0 < \nu < \nu'$}
  \BlankLine
  \Void{$\star$ une matrice $\code{PhiBK} \in M_d(\E^+_\nu)$}
  \Void{\hphantom{$\star$} donnant l'action, dans la base canonique, d'un opérateur $\phi$-semi-linéaire $\phi$ sur $(\E^+_\nu)^d$}
  \BlankLine
  \Void{$\star$ une constante $c \geq 0$ telle que $p^c \cdot \code{PhiBK} \in M_d(\Sk_\nu)$}
  \BlankLine
  \Void{$\star$ une constante $c_\GeLa \geq 0$ telle qu'il existe}
  \Void{\hphantom{$\star$} un module de Breuil-Kisin $\Mk_\nu$ compris entre $p^{-c} \: (\E^+_\nu)^d$ et $(\E^+_\nu)^d$}

  \BlankLine

  \Output{$\star$ la matrice de $\phi$ agissant sur un module de Breuil-Kisin 
    $\Mk_{\nu'}$ sur $\Sk_{\nu'}$}
  \Void{\hphantom{$\star$} tel que $p^{-c} \: \Sk_{\nu'}^d \subset \Mk_{\nu'} \subset \Sk_{\nu'}^d$}

  \BlankLine
  \BlankLine

  $D \leftarrow$ un dénominateur de $\nu'$\;
  $N \leftarrow$ plus petit multiple de $D$ supérieur ou égal à $\frac{c_\GeLa \cdot dD}{\nu'-\nu}$\;
  $a \leftarrow (0, \ldots, 0);$\,
  $b \leftarrow (0, \ldots, 0)$\;
  \While{il existe $(i,j)$ tel que $D \cdot v_\nu(\code{PhiBK}[i,j]) < a[j] - a[i]$}{
    $(i,j) \leftarrow$ un tel couple\;
    $b,\code{PhiBK} \leftarrow$ 
    \textsc{ChangeBase}$(b, \code{PhiBK}, \frac N D a[j], \code{PhiBK}[\undef,j], c, N, \nu)$\;
    \lFor{$i$ allant de $1$ à $d$}{$a[i] \leftarrow \text{Floor}(\frac{b[i] D}N)$}\;
    $\nu \leftarrow \nu + \frac c N$\;
  }
  \Return{$a,\code{PhiBK}$}\;
\caption{{\sc IterationFrobenius$(\nu,\nu',\code{PhiBK},c)$}
\label{algo:iterfrob}}
\end{algorithm}

\begin{rem}
Dans la remarque \ref{rem:fastiter}, nous avions présenté une idée 
permettant, semble-t-il, d'aboutir à une convergence beaucoup plus 
rapide vers la solution et consistant à itérer non pas l'opérateur 
$\phi$ lui-même mais ses puissances. Toutefois, lorsque l'on essaie
de l'appliquer à notre situation précise, on se heurte à un certain
nombre de difficultés que les auteurs de l'article n'ont pas réussi
à surmonter. Le souci majeur est que la \og construction $\Max$ \fg\ 
ne commute pas avec les sommes lorsque les pentes changent.
\end{rem}

\subsubsection{La liberté rendue}
\label{subsec:liberte}

À ce niveau, on a réussi à calculer un $d$-uplet d'entiers $a = (a_1, 
\ldots, a_d)$ et une matrice \code{PhiBK} à coefficients dans $\Sk_\nu$ 
pour lesquels on est assuré qu'il existe une $\E^+_{\nu'}$-base $(e_1, 
\ldots, e_d)$ de $\Dk_{\nu'} = \E^+_{\nu'} \otimes_{\E^+} \Dk$ (où on
rappelle que $\Dk$ désigne le module de Breuil-Kisin sur $\E^+$ associé au
$(\phi,N)$-module filtré $D$ avec lequel on est parti) telle que
\code{PhiBK} soit la matrice de $\phi$ dans la base $(\varpi_D^{-a_1}
e_1, \ldots, \varpi_D^{-a_d} e_d)$. En particulier, l'espace 
$$\Mk_{\nu',D} = \Sk_{\nu',D} \varpi_D^{-a_1} e_1 \oplus \cdots
\oplus \Sk_{\nu',D} \varpi_D^{-a_d} e_d$$
est stable par $\phi$. Toutefois, pour obtenir un module de Breuil-Kisin, on ne 
souhaite pas un espace défini sur $\Sk_{\nu',D}$ mais, bel et bien, un 
module \emph{libre} défini sur $\Sk_{\nu'}$. Il reste donc encore à 
redescendre $\Mk_{\nu',D}$ en un $\Sk_{\nu'}$-module et, pour cela, il 
est naturel de considérer l'intersection $\Mk_{\nu'} = \Mk_{\nu',D} 
\cap \Dk$ qui est à l'évidence, elle aussi, stable par $\phi$. Par
contre, \emph{a priori}, rien n'indique qu'elle est libre. Toutefois si 
$\nu' = \frac 1 D$ --- ce que l'on supposera à partir de maintenant --- 
on sait (voir les rappels du \S \ref{subsec:algomax}) que cette 
intersection est engendrée par les vecteurs $p^{q_i+1} e_i$ et $p^{q_i} 
u^{r_i} e_i$ ($1 \leq i \leq d$) où $q_i$ et $r_i$ désignent 
respectivement le quotient et le reste de la division euclidienne de 
$-a_i$ par $D$. Or, on peut manifestement écrire :
$$p^{q_i} u^{r_i} = p^{q_i+1} \cdot \frac{u^{r_i}} p \in p^{q_i+1}
\Sk_{1/r_i}$$
ce qui prouve que si $\nu''$ est supérieur ou égal à $\nu'$ et à tous 
les $\frac 1 {r_i}$ dès que $r_i \neq 0$, alors le module $\Mk_{\nu''} = 
\Sk_{\nu''} \otimes_{\Sk_{\nu'}} \Mk_{\nu'}$ est libre sur $\Sk_{\nu''}$ 
et admet pour base la famille des $u^{q_i+\epsilon_i} e_i$ où 
$\epsilon_i$ vaut $0$ si $r_i = 0$ et $1$ sinon.
Toutefois, cela n'est pas encore satisfaisant car si l'un des $r_i$ vaut 
$1$, l'argument précédent nous oblige à choisir un $\nu'' \geq 1$ et 
l'on ne peut alors plus appliquer le théorème de surconvergence des 
modules de Breuil-Kisin. Il faudrait donc pouvoir réussir à s'assurer que tous 
les restes $r_i$ non nuls sont suffisamment grands ; c'est, en quelque 
sorte, le contenu du lemme suivant.

\begin{lemme}
\label{lem:entiert}
Il existe un entier $t$ tel que, pour tout $i \in \{1, \ldots, d\}$,
le reste de la division euclidienne de $-(t+a_i)$ par $D$ soit égal à $0$
ou $\geq \frac D d$.
\end{lemme}

\begin{proof}
On note $\rho_1 < \cdots < \rho_d$ les restes des divisions euclidiennes 
des $-a_i$ par $D$ rangés par ordre croissant et on pose $\rho_{d+1} = 
\rho_1 + D$. Il est alors clair qu'il existe un $i$ tel que $\rho_{i+1} 
\geq \rho_i + \frac D d$ et l'entier $t = \rho_i$ vérifie la propriété 
du lemme.
\end{proof}

Soit $t$ un entier satisfaisant la condition du lemme. Si on remplace le 
$d$-uplet $a$ par $t + a = (t + a_1, \ldots, t + a_d)$, le module 
$\Mk_{\nu',D}$ est remplacé par $\varpi_D^{-t} \Mk_{\nu',D}$ et reste 
donc stable par $\phi$. Il en va donc de même de $\Mk_{\nu'}$ et de 
$\Mk_{\nu''}$ et, comme précédemment, ce dernier est libre sur 
$\Sk_{\nu''}$. La différence est que, maintenant, grâce à la 
modification que l'on a faite, on peut choisir $\nu'' = d\nu'$. 

Pour conclure, il reste encore à vérifier que $\Mk_{\nu''}$ est bien un 
module de Breuil-Kisin, c'est-à-dire qu'il est bien de hauteur finie. 
Cela se fait simplement en reprenant la première partie de la 
démonstration de la proposition \ref{prop:Eurauto} : on 
obtient\footnote{Nous n'avons pas réussi à adapter la deuxième partie de 
la preuve ; nous ne savons donc pas si $\Mk_{\nu''}$ est nécessairement 
de hauteur $\leq r$. Pouvoir montrer cela améliorerait la complexité 
finale de l'algorithme.}, ce faisant, que $\Mk_{\nu''}$ est de hauteur 
$\leq dr$. Ainsi si $\nu''$ est strictement inférieur à 
$\frac{p-1}{perd}$, le théorème de surconvergence des modules de 
Breuil-Kisin s'applique et affirme que le $\Mk_{\nu''}$ que l'on vient 
de calculer provient par extension des scalaires d'un module de 
Breuil-Kisin $\Mk$ sur $\Sk$. En particulier, $\Mk_{\nu''}$ correspond 
bien à un réseau stable par $G_\infty$ dans la représentation 
semi-stable associée à au $(\phi,N)$-module filtré $D$ duquel on est 
parti !

\subsubsection{L'algorithme sous forme synthétique}
\label{subsec:etape2synthese}

On rappelle qu'à l'issue de l'étape 1 (\emph{cf} \S \ref{subsec:etape1}),
on a calculé :
\begin{enumerate}[(i)]
\item une matrice \code{PhiBK} connue à précision $u$-adique $u^N$ 
donnant l'action du Frobenius $\phi$ sur le module de Breuil-Kisin
$\Dk_\nu$ ;
\item une constante $c$ tel que $p^c \cdot \code{PhiBK}$ soit à 
coefficients dans $\Sk_\nu$ ;
\item \label{item:cnu} une constante $c_\nu = c_\GeLa$ pour laquelle on a la 
garantie qu'il existe un module de Breuil-Kisin $\Mk_\nu$ de hauteur 
$\leq r$ compris entre $\Mk_\nu^\GeLa$ et $p^{-c} \cdot \Mk_\nu^\GeLa$
sachant que $\Mk_\nu^\GeLa$ désigne le $\Sk_\nu$-module ayant pour
base celle dans laquelle est écrite la matrice \code{PhiBK}.
\end{enumerate}
Les paramètres $N$ et $\nu$ qui sont apparus ci-dessus sont 
respectivement un entier strictement positif et un nombre rationnel
strictement positif qui peuvent être choisis comme on le souhaite.
Dans la suite, conformément aux contraintes qui ont été dégagées 
précédemment, on choisira un entier $D$ strictement supérieur à 
$\frac{perd^2}{p-1}$ et on prendra :
\begin{equation}
\label{eq:defnu}
N = 4 c \: c_\GeLa \cdot D^2
\quad \text{et} \quad
\nu = \frac 1 {2D}.
\end{equation}
On posera également $\nu' = \frac 1 D = 2 \nu$.
Dans les numéros précédents, nous avons présenté une méthode 
algorithmique pour calculer la matrice donnant l'action de $\phi$
sur un module de Breuil-Kisin $\Mk_\nu$ satisfaisant à la condition
de l'alinéa \eqref{item:cnu} ci-dessus, qui peut se résumer ainsi :

\begin{enumerate}[(1)]
\item Appliquer l'algorithme \ref{algo:iterfrob} avec les paramètres
précédents et noter $a, \code{PhiBK}$ le couple renvoyé
\item Appliquer l'algorithme \ref{algo:liberte} ci-après.
\end{enumerate}
\begin{algorithm}
  {\it // On calcule un entier $t$ vérifiant la condition du lemme
  \ref{lem:entiert}}

  $(\rho_1, \ldots, \rho_d)$ $\leftarrow$ restes des divisions
  euclidiennes de $-a_i$ par $D$ triés par ordre croissant\;
  \For{$i$ allant de $1$ à $d-1$}{
    \lIf{$\rho_{i+1} \geq \rho_i + \frac D d$}{
      $t$ $\leftarrow$ $\rho_i;$\, {\bf break}\;}
  }

  \BlankLine

  {\it // On calcule la matrice \code{PhiBK} de $\phi$ agissant sur
  $\Mk_{\nu''}$}

  \For{$i$ allant de $1$ à $d$}{
    $(q_i, r_i)$ $\leftarrow$ quotient et reste de la division
    euclidienne de $-(a_i+t)$ par $D$\;
    \lIf{$r_i > 0$}{$q_i$ $\leftarrow$ $q_i + 1$\;}
  }
  $D$ $\leftarrow$ la matrice diagonale dont les éléments
  diagonaux sont $p^{q_1}, \ldots, p^{q_d}$\;
  \code{PhiBK} $\leftarrow$ $D \cdot \code{PhiBK} \cdot D^{-1}$\;
  \Return \code{PhiBK}
\caption{\sc 
Liberte$(a, \code{PhiBK})$\label{algo:liberte}}
\end{algorithm}
La matrice \code{PhiBK} renvoyée par l'algorithme \ref{algo:liberte} est 
la matrice de l'action de $\phi$ sur un module de Breuil-Kisin $\Mk_{d 
\nu'}$ de hauteur $\leq rd$. Comme en outre, par notre choix de $\nu'$, 
on a $d \nu' < \frac{p-1}{perd}$, le théorème de surconvergence des
modules de Kisin s'applique.

Étant donné que l'algorithme \ref{algo:changebase} n'effectue des 
divisions euclidiennes qu'entre éléments de $\E^+_\nu$ (pour différents 
$\nu$) qui sont des polynômes, il est en outre facile d'analyser les 
pertes de précision $p$-adiques engendrées par la méthode ci-dessus : on 
trouve que si les coefficients de la matrice \code{PhiBK} initiale sont 
connus modulo $(p^M \: \Sk_\nu + u^N \: \Sk_\nu)$, alors ceux de la 
matrice \code{PhiBK} calculée sont connus au pire modulo $(p^{M-2} \: 
\Sk_{\nu''} + u^N \: \Sk_{\nu''})$. En effet, on remarque dans un 
premier temps que l'algorithme \ref{algo:changebase} n'entraîne aucune 
perte de précision, dans le sens où si les coefficients de
$$\varpi_N^{-a} X \quad \text{et} \quad
\Delta(b,N)^{-1} \cdot \code{PhiBK} \cdot \Delta(b,N) \quad \text{avec}
\quad \Delta(b,N) = \Diag(\varpi_N^{b_1}, \ldots, \varpi_N^{b_d})$$
calculées à partir des entrées sont connus modulo $(p^M \: \Sk_\nu + 
u^N \: \Sk_\nu)$, alors ceux de la matrice $\Delta(b,N)^{-1} \cdot 
\code{PhiBK} \cdot \Delta(b,N)$ calculée à partir de la sortie sont 
connus modulo $(p^M \: \Sk_{\nu'} + u^N \: \Sk_{\nu'})$ où, dans cette 
phrase et dans cette phrase uniquement, $\nu$ et $\nu'$ désignent les 
rationnels pris en entrée par l'algorithme \ref{algo:changebase}. Il 
résulte de ceci que les coefficients de la matrice $\Delta(a,D)^{-1} 
\cdot \code{PhiBK} \cdot \Delta(a,D)$ calculée à partir de la sortie de 
l'algorithme \ref{algo:iterfrob} sont connus modulo $(p^M \: \Sk_{\nu'} 
+ u^N \: \Sk_{\nu'})$ avec, cette fois-ci, à nouveau, $\nu' = \frac 1 
D$. On en déduit enfin la propriété annoncée en remarquant que les $q_i$ 
calculés dans l'algorithme \ref{algo:liberte} diffèrent d'au plus $1$ 
des $\frac {-a_i} D$.

\subsection{Étape 3 : Réduction modulo $p$ et semi-simplification}
\label{subsec:etape3}

À l'issue de l'étape précédente, nous avons calculé $\Mk_{\nu''} = 
\Sk_{\nu''} \otimes_\Sk \Mk$ pour un certain nombre rationnel $\nu'' < 
\frac{perd}{p-1}$. Le théorème de surconvergence des modules des modules 
de Breuil-Kisin nous assure que cela est suffisant pour retrouver $\Mk$. 
Toutefois, bien que la démonstration de ce théorème soit assez 
constructive, il n'est pas aisé, dans la pratique, de calculer $\Mk$ à 
partir de $\Mk_{\nu''}$. Par chance, nous n'en avons pas besoin ! En 
effet, en se rappelant de surcroît que $\nu'' = \frac 1 {dD}$ est 
l'inverse d'un nombre entier, l'égalité $\Mk_{\nu''} = \Sk_{\nu''} 
\otimes_\Sk \Mk$ montre que la connaissance de $\Mk_{\nu''}$ suffit à 
déterminer le quotient 
$$\Mk/(p \Mk + u^{1/\nu''} \Mk) \simeq \Mk_{\nu''}/(p \:\Mk_{\nu''} + 
u^{1/\nu''} \Mk_{\nu''}).$$
Or, étant donné que $\frac 1{\nu''} = dD \geq \frac{perd}{p-1}$, il est 
facile de démontrer par ailleurs (voir par exemple lemme 5 de 
\cite{caruso-phimod}) que deux modules de Breuil-Kisin de hauteur $\leq 
rd$ sur $\Sk/p\Sk \simeq k[[u]]$ qui deviennent égaux après réduction 
modulo $u^{1/\nu''}$ sont isomorphes. Ainsi, en pratique, connaissant 
$\Mk_{\nu''}$, il suffit de réduire ce module modulo $(p, u^{1/\nu''})$ 
puis de le relever n'importe comment sur $k[[u]]$.

À partir de là, calculer la semi-simplifiée de la représentation 
associée fait l'objet du chapitre 3 de la thèse de Le Borgne 
\cite{leborgne}. Nous ne nous attarderons donc pas davantage sur ce 
point et nous contentons de renvoyer le lecteur à cette référence.

\subsection{Étude de la complexité}
\label{subsec:complexite}

Nous terminons cette section par une étude sommaire de la complexité de 
l'algorithme que nous venons de présenter. Nous nous proposons, plus
précisément, de montrer que celle-ci est polynômiale en tous les
paramètres pertinents du problème, qui sont :
\begin{itemize}
\item l'entier $e$ qui est l'indice de ramification absolue de $K$ ;
\item l'entier $f$ qui est le degré de $k$ (que l'on suppose fini, on
rappelle) sur son sous-corps premier $\F_p$ ;
\item la valuation $p$-adique de la différente de $K/K_0$, notée $\delta$ ;
\item l'entier $r$ qui correspond (quitte à twister correctement $V$)
à la différence entre les deux poids de Hodge-Tate extrèmes de $V$.
\end{itemize}

\medskip

Avant de poursuivre, remarquons que, d'après les précisions qui ont été 
faites au \S \ref{subsec:repr} sur la représentation des éléments, la 
complexité des opérations arithmétiques élémentaires --- à savoir 
l'addition, la soustraction et la multiplication --- dans les anneaux 
$\O_{K_0} \simeq \Z_q$, $\O_K$, $K_0 \simeq \Q_q$, $K$, $\Sk_\nu$ et 
$\E^+_\nu$ est polynômiale en la taille de l'entrée, et même 
quasi-linéaire\footnote{Au moins, dans la cas des éléments de $\Sk_\nu$ 
et $\E^+_\nu$, si l'on suppose que les coeffcients des séries à 
multiplier sont tous connus avec à peu près la même précision.} si
on utilise des algorithmes basées sur la transformée de Fourier
rapide. Ceci vaut également pour la division euclidienne dans
$\Sk_\nu$ ; on renvoie le lecteur aux \S\S 2.3 et 4.2 de \cite{carlub} 
à ce propos.

\subsubsection{Coût de l'algorithme {\sc ChangeBase}}
\label{subsec:coutchangebase}

L'algorithme {\sc ChangeBase} (algorithme \ref{algo:changebase}, page 
\pageref{algo:changebase}) joue un rôle essentiel dans la deuxième étape 
de notre algorithme. Dans cette partie, nous étudions sa complexité. On 
note $M$ un entier pour lequel tous les coefficients des matrices
$$\varpi_N^{-a} X \quad \text{et} \quad 
\Delta(b,N)^{-1} \cdot \code{PhiBK} \cdot \Delta(b,N) 
\quad \text{avec} \quad 
\Delta(b,N) = \Diag(\varpi_N^{b_1}, \ldots, \varpi_N^{b_d})$$
sont connus modulo $p^M \cdot \Sk_\nu$ ou, ce qui revient au même, un
entier qui majore tous les $N_i$ de la représentation PAG (voir \S
\ref{subsec:repr}) des coefficients des matrices ci-dessus.

\medskip

Le calcul de la division euclidienne de la ligne \ref{lgn:diveuc} se 
fait en temps polynômial en $N(M+c)$ ; en effet, la représentation PAG 
des éléments $X[i]$ et $X[j]$ que l'on divise entre eux ont une taille 
qui ne dépasse pas $O(N(M+c))$. Il est alors clair que chaque exécution 
de la boucle \emph{tant que} (lignes \ref{lgn:whilestart} à 
\ref{lgn:whileend}) prend aussi un temps polynômial en $N(M+c)$.

Il ne reste donc plus qu'à majorer le nombre d'exécutions de cette
boucle. Pour cela, on considère le triplet $(-v, \delta, n)$ où :
\begin{itemize}
\item $v$ désigne le minimum des $v_{\nu'}(X[i]) + b_i$ pour $i$
variant dans $\{1, \ldots, d\}$,
\item $\delta$ désigne le maximum des $\deg_{\nu'}(X[i])$ pour $i$
parcourant les indices tels que $v_{\nu'}(X[i]) + b_i = v$,
\item $n$ désigne le nombre d'indices $i$ tels que $v = v_{\nu'}(X[i]) 
+ b_i$ et $d = \deg_{\nu'}(X[i])$
\end{itemize}
et on remarque qu'à chaque itération de la boucle, celui-ci diminue 
strictement pour l'ordre lexicographique donnant le poids le plus fort
à la première coordonnée. En effet, on constate tout 
d'abord que $v$ ne peut qu'augmenter, et donc $-v$ ne peut que diminuer.
De plus, en reprenant les notations de l'algorithme, on s'aperçoit que 
si le minimum des $v_{\nu'}(X[i]) + b_i$ est atteint pour un unique 
indice $i$, alors $b_i$ augmente strictement à la ligne 
\ref{lgn:updatebi} et, par suite, que $v$ augmente strictement dans ce 
cas. Si le minimum des $v_{\nu'}(X[i]) + b_i$ est atteint pour au moins 
deux indices $i$ et $j$, c'est-à-dire si $v_i = v_j$, on s'arrange à la 
ligne \ref{lgn:swapij} pour que $\deg_{\nu'}(X[i]) \geq 
\deg_{\nu'}(X[j])$ puis on remplace à la ligne \ref{lgn:updateXi} le 
coefficient $X[i]$ par le reste de sa division euclidienne par $X[j]$, 
faisant ainsi chuter son degré de Weierstrass à un entier $< 
\deg_{\nu'}(X[j])$. Ainsi, en supposant que $v$ n'augmente pas, si 
$X[i]$ et $X[j]$ avait même degré de Weierstrass, le nombre $\delta$ reste
inchangé alors que $n$ diminue tandis que si on avait $\deg_{\nu'}(X[i]) > 
\deg_{\nu'}(X[j])$, le nombre $\delta$ diminue. Dans tous les cas, le 
triplet $(-v, d, n)$ diminue donc strictement pour l'ordre 
lexicographique.

Pour conclure, il suffit de majorer le nombre de valeurs distinctes
que peut prendre le triplet $(-v, d, n)$. La coordonnée $d$ (resp.
$n$) est un entier qui varie entre $0$ et $N-1$ (resp. entre $1$ et 
$d$) ; elle peut donc prendre $N$ (resp. $d$) valeurs. Le nombre $v$,
quant à lui, est un rationnel dont le dénominateur est divisible par
le dénominateur de $\nu$, à savoir $N$. Par ailleurs, il ne
peut descendre en dessous de $-a$ (c'est la condition d'arrêt de 
l'algorithme) et, par définition de l'entier $c$, il ne peut excéder
$c-a$. Ainsi, il varie dans un ensemble de cardinal $cN$. En mettant
tout ensemble, on en déduit que le triplet $(-v, d, n)$ prend au
maximum $N^2 \cdot cd$ valeurs et, par suite, que la boucle \emph{tant
que} de l'algorithme \ref{algo:changebase} est exécutée au maximum
$N^2 \cdot cd$.

Il résulte de ce qui précède que la complexité de l'algorithme 
\ref{algo:changebase} est polynômiale en les paramètres $N$, $M$, $c$,
$d$ et, également, $e$ et $f$ qui contrôlent la \og taille \fg\ de $K$.

\subsubsection{Examen de la précision $p$-adique}

Avant de pouvoir conclure, il nous reste à estimer la précision 
$p$-adique avec laquelle l'entrée de notre algorithme --- c'est-à-dire 
le $(\phi,N)$-module filtré $D$ de départ donné, on le rappelle, par le 
quadruplet $(\code{Phi}, \code{N}, \code{H}, \code{F})$ --- doit être 
connue pour que tout le calcul puisse fonctionner correctement.

À cet effet, on remarque qu'à l'entrée de la troisième étape, le module 
$\Mk_{\nu''}$ que l'on a calculé est réduit modulo $(p \: \Mk_{\nu''} +
u^{1/\nu''} \: \Mk_{\nu''})$. 
Ainsi, il suffit de connaître les coordonnées de la matrice \code{PhiBK} 
donnant l'action de $\phi$ sur $\Mk_{\nu''}$ modulo $(p \: \Sk_{\nu''}
+ u^N \: \Sk_{\nu''})$ car $N \geq \frac 1 {\nu''}$. Ainsi, l'entier $M$ 
du \S \ref{subsec:etape2synthese} peut être choisi égal à $3$ étant 
donné que l'on a vu que le résultat calculé à l'issue de l'étape 2 est 
connu modulo $(p^{M-2} \: \Sk_{\nu''} + u^N \: \Sk_{\nu''})$.
On en déduit que l'entier $M$ du \S \ref{subsec:etape1synthese}, 
c'est-à-dire la précision de l'entrée que l'on cherche à évaluer, peut
être choisi égal $M_0 + 3$ où $M_0$ est le nombre qui a été défini au
\S \ref{subsec:etape1synthese}. En particulier, il résulte de la 
formule \eqref{eq:precreletape1} que $M_0$ dépend de façon polynômiale
des paramètres $d$, $r$ et $\log_p(\frac 1 \nu)$. En outre, d'après
la formule \eqref{eq:defnu}, la quantité $\frac 1 \nu$ est contrôlée 
polynomialement par les paramètres $c$, $c_\GeLa$, $e$, $r$ et $d$.
Or, dans l'introduction du \S \ref{subsec:etape2}, on a vu que $c$ et 
$c_\GeLa$ sont eux-mêmes controlés polynomialement par $e$, $r$, $d$ et 
$\delta$ (qui désigne, on rappelle, la valuation $p$-adique de la différente 
de $K$ à $K_0$). On trouve ainsi que $M_0$, et donc également $M_0 + 3$, 
est inférieur à un certain polynôme dépendant des variables $e$, $r$,
$d$ et $\delta$.

\subsubsection{Un algorithme de complexité polynômiale}

À la lumière de ce qui précède, il n'est pas difficile de conclure que 
l'algorithme de calcul de la semi-simplifiée modulo $p$ que nous avons 
présenté dans cet article a une complexité polynômiale en $e$, $f$, $r$,
$d$ et $\delta$.
En effet, revenant à la synthèse du \S \ref{subsec:etape1synthese}, on
démontre immédiatement que l'étape 1 de notre algorithme a une complexité
polynômiale en $e$, $f$, $r$, $d$, $N$, $n \approx \log_p(\frac 1 \nu)$ et
$M_0$ et donc, d'après les dépendances qui ont été explicitées ci-dessus, 
a également une complexité polynômiale en $e$, $f$, $r$, $d$ et $\delta$. 
De manière similaire, en étudiant la synthèse du \S 
\ref{subsec:etape2synthese}, on obtient une complexité polynômiale en
$e$, $f$, $r$, $d$ et $\delta$ pour l'étape 2 de notre algorithme. Enfin,
les résultats de \S III.2.5.2 de \cite{leborgne} montrent que l'étape
3 de notre algorithme a également une complexité polynômiale en $e$,
$f$, $r$ et $d$. Mettant tout ensemble, on conclut.


\end{document}